\newtheorem{lemma}{Lemma}[section]
\newtheorem{theorem}[lemma]{Theorem}
\newtheorem{proposition}[lemma]{Proposition}
\newtheorem{cor}[lemma]{Corollary}
\theoremstyle{remark}
\newtheorem{rem}[lemma]{Remark}
\theoremstyle{definition}
\newtheorem{definition}[lemma]{Definition}
\numberwithin{equation}{section}
\def\Mat{\mathop{\rm Mat}\nolimits}          
\def\GL{\mathop{\rm GL}\nolimits}  
\def\AGL{\mathop{\rm AGL}\nolimits}           
\def\char{{\rm char\,}}
\def\Ker{{\rm Ker\,}}
\def\diag{\mathop{\rm diag}\nolimits}
\def\rk{\mathrm{rk}}
\def\sp{\mathrm{Span}}
\def\s{\sharp}
\def\T{\mathop{\rm T}\nolimits}
\newcommand{\R}{\mathbb{R}}   
\newcommand{\RR}{\mathcal{R}}  
\newcommand{\F}{\mathbb{F}}    
\newcommand{\A}{\mathcal{L}}    
 \newcommand{\C}{\mathbf{C}}    
 \newcommand{\J}{\mathbf{J}}    
\newcommand{\Tr}{\mathcal{T}}    
\newcommand{\B}{\mathcal{B}}   
 \newcommand{\Z}{\mathbf{Z}}
\def\d{\mathsf{d}}
\def\r{\mathsf{r}}
\def\k{\mathsf{k}}
\def\G{\overline{\mathsf{B}}}
\def\H{\mathsf{B}}
\def\M{\mathsf{M}}
\def\N{\mathsf{N}}
\def\S{\mathsf{S}}
\def\P{\mathsf{P}}
\def\QQ{\mathsf{Q}}
\def\K{\mathsf{C}}
\def\CC{\mathsf{A}}
\def\CCC{\overline{\mathsf{A}}}
\def\D{\Delta_4(\F)}
\def\rel{\divideontimes}
\begin{document}

\title[Isomorphism classes of four dimensional nilp. assoc. algebras]{Isomorphism classes of four dimensional nilpotent associative algebras over a field}

\author{Marco Antonio Pellegrini}
\address{Dipartimento di Matematica e Fisica, Universit\`a Cattolica del Sacro Cuore, Via Musei 41,
I-25121 Brescia, Italy}
\email{marcoantonio.pellegrini@unicatt.it}

\keywords{Regular subgroup; congruent matrices; nilpotent algebra; finite field}
\subjclass[2010]{16Z05, 16N40,  15A21, 20B35}
 
\begin{abstract}
In this paper we classify the isomorphism classes of four dimensional nilpotent  associative algebras over a field $\F$,
studying regular subgroups of the affine group $\AGL_4(\F)$.
In particular  we provide explicit representatives for such classes  when $\F$ is  a finite 
field, the real field $\R$ or an algebraically closed field.
\end{abstract}

\maketitle

\section{Introduction}\label{intro}

The aim of this paper is the classification of the isomorphism classes of four dimensional nilpotent associative  
algebras over a field $\F$. This result will be achieved exploiting the properties of the regular subgroups of 
the affine group $\AGL_4(\F)$. 

It is worth recalling that the affine group $\AGL_n(\F)$ can be identified with the 
subgroup of $\GL_{n+1}(\F)$ consisting of the invertible
matrices 
having  $(1,0,\ldots,0)^{\T}$ as first column. It follows that $\AGL_n(\F)$ acts on the right on the set 
$\mathcal{A}=\left\{(1,v): v\in \F^n\right\}$ of affine points.
A subgroup $R$ of $\AGL_n(\F)$ is called regular if it 
acts regularly on $\mathcal{A}$, namely  if for every $v\in \F^n$ there exists a unique
element in  $R$ having $(1,v)$ as first row. 
Writing the elements $r$ of a regular subgroup $R\leq \AGL_n(\F)$  
as
\begin{equation}\label{tauv}
r=\begin{pmatrix} 1 & v \\ 0 & I_n+\delta_R(v)  \end{pmatrix}=\mu_R(v), 
\end{equation}
we give rise to the functions $\delta_R: \F^n\rightarrow \Mat_n(\F)$ and $\mu_R: \F^n\rightarrow R$.
For instance, the translation subgroup $\Tr_n$ of $\AGL_n(\F)$ is a regular subgroup, corresponding to the choice 
$\delta_{\Tr_n}$ equal to the zero function. 

We focus our attention on the set  $\Delta_n(\F)$  of the regular subgroups $R$ of 
$\AGL_n(\F$) with the property that $\delta_R$ is a linear function. Notice that if $R$ is abelian, then $R\in \Delta_n(\F)$ (see 
\cite{CDS,T}); however, for $n\geq 3$, the set $\Delta_n(\F)$ also contains nonabelian groups (see \cite{PT}).
We also remark that if $R\in \Delta_n(\F)$, then $R$ is unipotent and contains nontrivial translations (see 
\cite{P,PT2}).
Our main motivation for studying the set $\Delta_n(\F)$ is that  there exists a bijection 
between  the set of isomorphism classes of nilpotent  associative algebras of dimension $n$ over a field $\F$ and the set of conjugacy classes 
of  subgroups in $\Delta_n(\F)$ (see \cite[Proposition 2.2]{P}). 
In fact, our classification of four dimensional nilpotent associative $\F$-algebras (Theorem \ref{main}) will follow from the classification of the conjugacy 
classes of regular subgroups in $\D$.

The paper is organized as follows.
In Section \ref{prel} we recall some useful results concerning $\Delta_n(\F)$ and 
in the following sections we classify, up to conjugation, the regular subgroups in $\D$, 
see Theorems \ref{main_ab} and \ref{main_nonab}.
More in detail, in Section \ref{ab} we deal with abelian regular subgroups and in Section \ref{nonab} we consider nonabelian subgroups.  
Corollaries \ref{ab_ac} and \ref{ab_f} and Propositions \ref{non_ac} and \ref{non_f} give the explicit classification 
for  algebraically closed fields or finite fields.
The classification for the real field is given in Corollary \ref{ab_real} and Proposition \ref{nonab_real}.

Before to describe the results we obtained,  we need to fix some notation.
The sets $\Pi_S(\F)$ and $\Pi_A(\F)$ are  complete sets of representatives of  projective congruent classes for, 
respectively,
symmetric and asymmetric matrices in $\Mat_3(\F)$, see  Definition \ref{projcong}.
Given a field $\F$,  we denote by $\F^\square$ a transversal for $(\F^\ast)^2$ in $\F^\ast$.
Also,  for any $\rho \in 
\F^\square$,  we denote by $\CC_\rho(\F)$ a fixed set of representatives for the equivalence classes with respect to 
the following relation  $\rel_\rho$ defined on $\CCC(\F)=\left(\F\setminus\{1\}\right)\times \F$:
given $(\beta_1,\beta_2),(\beta_3,\beta_4)\in \CCC(\F)$, we write $(\beta_1,\beta_2)\rel_\rho (\beta_3,\beta_4)$ if and 
only if 
either $(\beta_3,\beta_4)=(\beta_1,\pm \beta_2)$ or
\begin{equation}
\beta_3=\frac{\beta_1 t^2+ \beta_2 t-\rho}{t^2 + \beta_2 t -\rho\beta_1}\quad\textrm{ and }\quad
 \beta_4= \pm \frac{\beta_2 t^2-2\rho(\beta_1+1)t-\rho\beta_2}
  {t^2+\beta_2 t -\rho\beta_1}
\end{equation}
for some $t \in \F$  such that $(t^2+\rho )(t^2 + \beta_2 t -\rho\beta_1)\neq 0$.
If $\char\F=2$, we fix two additional sets $\H(\F)$ and $\K(\F)$. The set $\H(\F)$ is a transversal for the subgroup 
$\G(\F)=\{\lambda^2+\lambda : \lambda \in \F\}$ in $\F$. The set $\K(\F)$ is a set of representatives for 
the equivalence classes with respect to the following relation  $\star$  defined on $\F^\square$: given 
$\rho_1,\rho_2\in 
\F^\square$, we write
$\rho_1 \star \rho_2$ if and only if $\rho_2=\frac{x_1^2+x_2^2\rho_1}{x_3^2+x_4^2\rho_1}$ for some $x_1,x_2,
x_3, x_4 \in \F$ such that $x_1x_4\neq x_2x_3$. For convenience, we assume $1\in \F^\square$ and $0\in \H(\F)$.

We can now state our main result.

\begin{theorem}\label{main}
The distinct isomorphism classes of  four dimensional  nilpotent associative  algebras over a field $\F$ can 
be represented by the algebras described in the second column of Table \ref{tab1} (abelian case) and 
Table \ref{tab2} (nonabelian case).
\end{theorem}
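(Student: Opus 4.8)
The plan is to prove Theorem~\ref{main} by reducing the classification of four dimensional nilpotent associative $\F$-algebras to the classification of conjugacy classes of regular subgroups in $\D$, using the bijection of \cite[Proposition 2.2]{P} already quoted in the introduction. Thus the real content of the theorem is Theorems~\ref{main_ab} and \ref{main_nonab}, which will be proved in Sections~\ref{ab} and \ref{nonab}; once those are available, the present statement is essentially a translation. Concretely, I would first recall from Section~\ref{prel} that to each $R\in\Delta_n(\F)$ one associates the algebra structure on $\F^n$ whose multiplication is read off from the linear map $\delta_R\colon\F^n\to\Mat_n(\F)$ (so that $u\cdot v = u\,\delta_R(v)$, or the analogous convention fixed in the preliminaries), and that two subgroups of $\D$ are conjugate in $\AGL_4(\F)$ if and only if the corresponding algebras are isomorphic. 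Each row of Tables~\ref{tab1} and \ref{tab2} will then be obtained by writing out the algebra attached to the representative subgroup listed (or produced) in Theorems~\ref{main_ab} and \ref{main_nonab}, choosing a convenient basis, and recording the nonzero products.

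Next I would organize the verification into the two cases matching the two tables. For the abelian case, Theorem~\ref{main_ab} together with Corollaries~\ref{ab_ac}, \ref{ab_f}, \ref{ab_real} supplies a complete and irredundant list of conjugacy classes of abelian regular subgroups in $\D$; I would go through that list, compute $\delta_R$ for each representative, and check that the resulting commutative nilpotent algebra is exactly the one in the corresponding row of Table~\ref{tab1}. For the nonabelian case I would do the same using Theorem~\ref{main_nonab} and Propositions~\ref{non_ac}, \ref{non_f}, \ref{nonab_real}, filling in Table~\ref{tab2}. In both cases the parametrizations by $\Pi_S(\F)$, $\Pi_A(\F)$, $\F^\square$, the sets $\CC_\rho(\F)$, and (in characteristic $2$) $\H(\F)$ and $\K(\F)$ are inherited directly from those theorems, so the bookkeeping of parameters on the algebra side is forced by the bookkeeping already done on the group side.

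Two points require care. First, completeness: I must argue that every four dimensional nilpotent associative $\F$-algebra $A$ arises this way. This follows because $A$ corresponds under \cite[Proposition 2.2]{P} to some $R\in\D$, and Theorems~\ref{main_ab}/\ref{main_nonab} classify all of $\D$ up to conjugacy — $A$ is abelian (i.e.\ commutative) exactly when $R$ is abelian, which is why the split into Tables~\ref{tab1} and \ref{tab2} is exhaustive. Second, irredundancy: distinct rows must give nonisomorphic algebras, equivalently nonconjugate subgroups; this is precisely the no-overlap part of Theorems~\ref{main_ab} and \ref{main_nonab}, so it transfers verbatim. The main obstacle is not conceptual but computational: one must actually carry out, for each representative subgroup, the passage $\delta_R\rightsquigarrow$ (structure constants)$\rightsquigarrow$ (normalized multiplication table), and make sure the chosen bases are coherent across the whole table so that the stated representatives are literally the images of the classified subgroups. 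This per-row computation, together with matching the special-field specializations to Corollaries~\ref{ab_ac}, \ref{ab_f}, \ref{ab_real} and Propositions~\ref{non_ac}, \ref{non_f}, \ref{nonab_real}, is where essentially all the work lies, but each individual step is routine linear algebra once the general correspondence of Section~\ref{prel} is in hand.
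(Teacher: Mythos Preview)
Your approach is essentially the paper's: Theorem~\ref{main} is deduced from Theorems~\ref{main_ab} and \ref{main_nonab} via the dictionary of Proposition~\ref{prop34} (equivalently \cite[Proposition 2.2]{P}), with the abelian/nonabelian split matching the two tables. The only over-elaboration is the anticipated per-row translation from $\delta_R$ to structure constants: the second column of each table already \emph{is} the matrix algebra $R-I_5=\J(\A_R)$, so no further computation or choice of basis is required, and the field-specific Corollaries and Propositions you list are specializations rather than ingredients of the proof.
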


\begin{table}[t]
\begin{footnotesize}
$$\begin{array}{ccc}
R & R-I_5  & \textrm{Conditions} \\ \hline
S_{(5)} & 
\begin{pmatrix}
0 & x_1 & x_2 & x_3 & x_4 \\
0 &   0 & x_1 & x_2 & x_3\\
0 &   0 &   0 & x_1 & x_2\\
0 &   0 &   0 &   0 & x_1 \\
0 &   0 &   0 &   0 &   0
 \end{pmatrix}  
& \\[-10pt] \\ 
 S_{(4,1)} & \begin{pmatrix}
0 & x_1 & x_2 & x_3 & x_4 \\
0 &   0 & x_1 & x_2 & 0\\
0 &   0 &   0 & x_1 & 0\\
0 &   0 &   0 &   0 & 0 \\
0 &   0 &   0 &   0 &   0
\end{pmatrix} 
 & \\[-10pt] \\
S_{(3,2)}^{\s} & \begin{pmatrix}
0 & x_1 & x_2 & x_3 & x_4 \\
0 &   0& x_1 & 0 & x_2\\
0 &   0 &   0 & 0 & x_1\\
0 &   0 &   0 &   0 & x_3 \\
0 &   0 &   0 &   0 &   0
\end{pmatrix}\\[-10pt] \\
U_1(0,0) & \begin{pmatrix}
0 & x_1 & x_2 & x_3 & x_4\\
0 & 0 & 0 & x_1 & x_2\\
0 & 0 & 0 & 0 & x_1\\
0 & 0 & 0 & 0 & 0\\
0 & 0 & 0 & 0 & 0\\
\end{pmatrix}
&\char \F\neq 2\\[-10pt] \\
U_1(0,\rho) & \begin{pmatrix}
0 & x_1 & x_2 & x_3 & x_4\\
0 & 0 & 0 & x_1 & x_2\\
0 & 0 & 0 & \rho x_2 & x_1\\
0 & 0 & 0 & 0 & 0\\
0 & 0 & 0 & 0 & 0\\
\end{pmatrix}
&
\begin{array}{ll}
\rho \in \F^\square  & \textrm{if } \char \F\neq 2\\
\rho \in \K(\F)  &\textrm{if } \char \F= 2\\
\end{array}\\[-10pt] \\
U_1(1,\epsilon)&
\begin{pmatrix}
0 & x_1 & x_2 & x_3 & x_4\\
0 & 0 & 0 & x_1 & x_2\\
0 & 0 & 0 & \epsilon x_2 & x_1+ x_2\\
0 & 0 & 0 & 0 & 0\\
0 & 0 & 0 & 0 & 0\\
\end{pmatrix}
&  
\char \F = 2, \; \epsilon \in \H(\F) 
\\[-10pt] \\
\RR_{D} & 
\begin{pmatrix}
0 & x_1 & x_2 & x_3 & x_4\\
0 & 0 & 0 & 0 & \alpha_1x_1+\alpha_2x_2+\alpha_3 x_3\\
0 & 0 & 0 & 0 & \alpha_2 x_1+\beta_2 x_2+\beta_3x_3\\
0 & 0 & 0 & 0 & \alpha_3 x_1+\beta_3 x_2+\gamma_3 x_3\\
0 & 0 & 0 & 0 & 0
\end{pmatrix}
&  \begin{pmatrix}
\alpha_1 & \alpha_2 & \alpha_3\\
\alpha_2 & \beta_2 & \beta_3\\
\alpha_3 & \beta_3 & \gamma_3 
\end{pmatrix}\in \Pi_S(\F)\\[-10pt] \\ 
\end{array}$$
\end{footnotesize}
\caption{Four dimensional abelian nilpotent associative  $\F$-algebras and abelian regular subgroups of $\AGL_4(\F)$.}
\label{tab1}
\end{table}

\begin{table}[t]
\begin{footnotesize}
$$\begin{array}{ccc}
R & R-I_5  & \textrm{Conditions} \\ \hline
 U_2(0,1,0,1,1) &  \begin{pmatrix}
 0 &    x_1 &   x_2 &   x_3 &         x_4 \\
 0 &       0 &   x_1 &       0 &   x_2+  x_3 \\ 
 0 &       0 &      0 &        0 &         x_1 \\ 
 0 &       0 &      0 &        0 &         x_3 \\ 
 0 &       0 &      0 &        0 &         0 
  \end{pmatrix}
  & \\[-10pt] \\
U_2(0,1,0,0,1) & \begin{pmatrix}
 0 &   x_1 &  x_2 &  x_3 &    x_4 \\
 0 &      0 &  x_1 &      0 &   x_2+ x_3 \\ 
 0 &      0 &     0 &      0 &    x_1 \\ 
 0 &      0 &     0 &      0 &    0\\ 
 0 &      0 &     0 &      0 &    0 
  \end{pmatrix}
  & \\[-10pt] \\ 
  U_3(0,1,0) & \begin{pmatrix}
0 & x_1 & x_2 & x_3 & x_4\\
0 & 0 & x_1 & 0 & 0\\
0 & 0 & 0 & 0 & 0 \\
0 & 0 & 0 & 0 &  x_1\\
0 & 0 & 0 & 0 & 0
\end{pmatrix}
  & \\[-10pt] \\ 
U_3(0,1,1) & \begin{pmatrix}
0 & x_1 & x_2 & x_3 & x_4\\
0 & 0 & x_1 & 0 & 0\\
0 & 0 & 0 & 0 & 0 \\
0 & 0 & 0 & 0 &  x_1 + x_3\\
0 & 0 & 0 & 0 & 0
\end{pmatrix}
  & \\[-10pt] \\ 
U_3(1,\lambda,0) & \begin{pmatrix}
0 & x_1 & x_2 & x_3 & x_4\\
0 & 0 & x_1 & 0 &  x_3\\
0 & 0 & 0 & 0 & 0 \\
0 & 0 & 0 & 0 & \lambda x_1 \\
0 & 0 & 0 & 0 & 0
\end{pmatrix}
  & \lambda \in \F\setminus\{1\} \\[-10pt] \\ 
  U_4(\rho,\beta_1,\beta_2) & 
  \begin{pmatrix}
1 & x_1 & x_2 & x_3 & x_4\\
0 & 1 & 0 & x_1 &  x_2\\
0 & 0 & 1 & \rho x_2 & \beta_1 x_1+\beta_2 x_2\\
0 & 0 & 0 & 1 & 0\\
0 & 0 & 0 & 0 & 1
\end{pmatrix}
  & \rho \in \F^\square,\;\;(\beta_1,\beta_2)\in \CC_\rho(\F)  \\[-10pt] \\
  U_5(1,1,\epsilon) & \begin{pmatrix}
0 & x_1 & x_2 & x_3 & x_4\\
0 & 0 & 0 & x_2 &  x_1+ x_2\\
0 & 0 & 0 & x_1 & \epsilon x_2\\
0 & 0 & 0 & 0 & 0\\
0 & 0 & 0 & 0 & 0
\end{pmatrix}
  & \char\F=2,\; \;\epsilon \in \H(\F) \\[-10pt] \\ 
 
\RR_{D} & 
\begin{pmatrix}
0 & x_1 & x_2 & x_3 & x_4\\
0 & 0 & 0 & 0 & \alpha_1x_1+\alpha_2x_2+\alpha_3 x_3\\
0 & 0 & 0 & 0 & \beta_1 x_1+\beta_2 x_2+\beta_3x_3\\
0 & 0 & 0 & 0 & \gamma_1 x_1+\gamma_2 x_2+\gamma_3 x_3\\
0 & 0 & 0 & 0 & 0
\end{pmatrix}
&  \begin{pmatrix}
\alpha_1 & \alpha_2 & \alpha_3\\
\beta_1 & \beta_2 & \beta_3\\
\gamma_1 & \gamma_2 & \gamma_3 
\end{pmatrix}\in \Pi_A(\F)\\[-10pt] \\ 
\end{array}$$
\end{footnotesize}
\caption{Four dimensional nonabelian nilpotent associative  $\F$-algebras and nonabelian regular subgroups in $\D$.}
\label{tab2}
\end{table}

In particular, there are exactly 
$$\left\{\begin{array}{cl}
9 & \textrm{ if } \F= \overline{\F} \textrm{ is algebraically closed and } \char \F \neq 2,\\
10 & \textrm{ if } \F= \overline{\F} \textrm{ is algebraically closed and } \char \F = 2,\\
11 & \textrm{ if } \F= \F_q \textrm{ is finite},\\
12 & \textrm{ if } \F= \R,\\
\end{array}\right.$$
nonisomorphic abelian nilpotent associative  $\F$-algebras of dimension four.
We point out that the classification given in Theorem \ref{main} for the abelian case was already obtained, using other techniques, by De Graaf 
\cite{DG} (if $\F=\F_q$ or $\R$) and by Poonen \cite{Poo} (for algebraically closed fields).
It is also interesting to observe that if $\F=\F_q$ is a finite field, then there are exactly $5q+9$ (if $q$ is odd) 
or $5q+6$ (if $q$ is even) nonisomorphic nonabelian nilpotent associative  $\F_q$-algebras of dimension four (see 
Proposition \ref{non_f}).

\section{Preliminary results}\label{prel}

Our classification of regular subgroups in $\D$ basically relies on the key results of \cite{P,PT} that we recall 
in this section.

\begin{proposition}\cite[Theorems 2.4 and 4.4]{PT}\label{lem1}
Let $R\in \Delta_n(\F)$. Then,
\begin{itemize}
\item[(a)] $R$ is a unipotent subgroup;
\item[(b)] up to conjugation under $\AGL_n(\F)$, the center $\Z(R)$ of $R$ contains a nontrivial element $z$ that coincides with its Jordan form $\diag\left(J_{n_1}, \dots, J_{n_k}\right)$ having upper unitriangular Jordan blocks  $J_{n_i}$ of respective sizes  $n_i\ge n_{i+1}$ for all $i\ge 1$.
\end{itemize}
\end{proposition}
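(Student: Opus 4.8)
The plan is to exploit the regularity of the action together with the linearity of $\delta_R$, and I would start from two elementary facts. Writing $r=\mu_R(v)$ for the element of $R$ with first row $(1,v)$: since $R$ is a subgroup, $\mu_R(v)\mu_R(w)=\mu_R\bigl(v+w+v\delta_R(w)\bigr)$, and equating the lower right $n\times n$ blocks while using the linearity of $\delta_R$ gives
\[
\delta_R(v)\,\delta_R(w)=\delta_R\bigl(v\,\delta_R(w)\bigr)\qquad (v,w\in\F^n),
\]
whence, by induction, $\delta_R(v)^{k}=\delta_R\bigl(v\,\delta_R(v)^{k-1}\bigr)$ for all $k\ge 1$. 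Secondly, $r$ fixes the affine point $(1,x)$ exactly when $x\,\delta_R(v)=-v$, so regularity forces: if $v\neq 0$, then $v$ does not lie in the row space $\F^n\delta_R(v)=\{x\,\delta_R(v):x\in\F^n\}$ of $\delta_R(v)$.

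For (a) I would argue by contradiction. Suppose $N:=\delta_R(v_0)$ is not nilpotent for some $v_0$. By Fitting's lemma $\F^n=V_0\oplus V_1$ with $N$ nilpotent on $V_0$ and invertible on $V_1\neq 0$; fix $m$ large enough that $N^{m-1}$ vanishes on $V_0$ and $\F^nN^{m}=V_1$, and put $v:=v_0 N^{m-1}$. Then $v\in V_1$, because $V_1$ is $N$-invariant and $N^{m-1}$ kills the $V_0$-component of $v_0$; moreover $v\neq 0$, since $v=0$ would force $N^{m}=\delta_R\bigl(v_0 N^{m-1}\bigr)=\delta_R(v)=0$ by the identity above, contradicting the choice of $N$; finally $\delta_R(v)=N^{m}$ has row space $V_1\ni v$. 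This contradicts the second fact. Hence $\delta_R(v)$ is nilpotent for every $v$, so each $I_n+\delta_R(v)$ is unipotent and therefore so is the block upper triangular matrix $\mu_R(v)$; that is, $R$ is unipotent. (When $\F$ is finite one can argue more cheaply: $|R|=|\F|^n$ is a power of $\char\F$, and a $p$-subgroup of $\GL_{n+1}(\F)$ in characteristic $p$ is unipotent.)

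For (b), part (a) shows that $R$ is unipotent, hence conjugate to a subgroup of the nilpotent group of upper unitriangular matrices, and so is nilpotent as an abstract group. The set $T=R\cap\Tr_n$ of translations in $R$ is a normal subgroup of $R$, and it is nontrivial by the property of $\Delta_n(\F)$ recalled in the introduction; therefore $T\cap\Z(R)\neq 1$, and I may pick a nontrivial $z=\mu_R(v_0)\in T\cap\Z(R)$, so that $z=\begin{pmatrix}1 & v_0\\ 0 & I_n\end{pmatrix}$ with $v_0\neq 0$. Conjugating $z$ by $\diag(1,B)\in\AGL_n(\F)$ replaces $v_0$ by $v_0B$; choosing $B\in\GL_n(\F)$ with $v_0B=e_1$, the first vector of the standard basis of $\F^n$, turns $z$ into $\begin{pmatrix}1 & e_1\\ 0 & I_n\end{pmatrix}=\diag(J_2,J_1,\dots,J_1)$, which coincides with its own Jordan form and has blocks of non-increasing sizes $2\ge 1\ge\cdots\ge 1$. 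Since $\diag(1,B)\in\AGL_n(\F)$, the conjugate group $\diag(1,B)^{-1}R\,\diag(1,B)$ is again contained in $\AGL_n(\F)$ and its center contains this Jordan matrix, which is exactly the assertion of (b).

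I expect the substantive step to be the nilpotency of all the matrices $\delta_R(v)$ in part (a); the remainder is a short piece of nilpotent group theory together with elementary linear algebra. The only delicate point in (b) is that the conjugation must take place inside $\AGL_n(\F)$ and not $\GL_{n+1}(\F)$: choosing the central element $z$ to be a translation is precisely what makes this harmless, since then all of $\GL_n(\F)$ is available to normalise the vector $v_0$.
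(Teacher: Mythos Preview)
The paper does not prove this proposition; it is quoted verbatim from \cite[Theorems~2.4 and~4.4]{PT}, so there is no in-paper argument to compare yours against.

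Your proof of (a) is correct and self-contained: the Fitting decomposition of $N=\delta_R(v_0)$, together with the iterated identity $\delta_R(v)^k=\delta_R\bigl(v\,\delta_R(v)^{k-1}\bigr)$ and the regularity constraint $v\notin\F^n\delta_R(v)$ for $v\neq0$, neatly rules out any non-nilpotent $\delta_R(v_0)$. One small point you pass over quickly: the step ``$R$ unipotent $\Rightarrow$ $R$ conjugate into the unitriangular group'' is Kolchin's theorem; here it is in fact easier, since $\{\delta_R(v):v\in\F^n\}$ is a linear subspace of nilpotents closed under multiplication, hence a nilpotent associative algebra, hence simultaneously strictly upper-triangularisable.

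Your proof of (b) is correct for the statement \emph{as literally written}, but it is strictly weaker than the result the paper goes on to use. You produce, after conjugation, a central element equal to $\diag(J_2,J_1,\dots,J_1)$, and that is indeed a Jordan form with non-increasing block sizes. However, look at how the proposition is applied in Section~\ref{sec2}: there one chooses a central $z$ realising the invariants $\d(\Z(R))$ and $\r(\Z(R))$ (so possibly $z=J_5$, or $\diag(J_4,I_1)$, etc.) and then works in its centraliser. For that, one needs the stronger assertion that \emph{any} prescribed nontrivial $z\in\Z(R)$ can be brought to its Jordan form by conjugation inside $\AGL_n(\F)$, not merely some central translation. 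Your trick of picking $z\in T\cap\Z(R)$ makes the conjugation harmless precisely because it forces the trivial Jordan type; it cannot reach the larger Jordan forms. The genuine content of \cite[Theorem~4.4]{PT} is that for a general unipotent $z=\mu_R(v)$ one can first conjugate the lower block $I_n+\delta_R(v)$ to Jordan form via $\diag(1,B)$ and then argue, using regularity, that the resulting first row can be absorbed so that the whole matrix is in Jordan form; this second step is exactly what your shortcut bypasses and does not supply.
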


If  $\delta_R$ is a linear function then a regular subset $R$ of $\AGL_n(\F)$  is a  subgroup if and only if
\begin{equation*}
\delta_R(v\delta_R(w))=\delta_R(v)\delta_R(w),\ \quad  \textrm{ for all }  v,w\in \F^n,
\end{equation*}
see \cite{PT}. Also, given $v,w\in \F^n$, 
\begin{equation*}
\mu_R(v)\mu_R(w)=\mu_R(w)\mu_R(v) \quad\textrm{ if and only if }\quad v\delta_R(w)= w\delta_R(v).
 \end{equation*}

We also recall that two subgroups $R_1,R_2\in\Delta_n(\F)$ are conjugate in $\AGL_n(\F)$ if and only if there 
exists a matrix $g=\diag(1,\bar g)$ with $\bar g\in \GL_n(\F)$ such that $R_1^g=R_2$ (see \cite[Proposition 3.4]{PT}).
On the other hand it is useful, mainly to prove that two regular subgroups are not conjugate, to
introduce the following three parameters. Let $H\in \{R, \Z(R)\}$, where  $R\in \Delta_n(\F)$. Then we may define:
\begin{equation*}
\begin{array}{rcl}
\d(H) & = &\max\{\deg \min_{\F}(h-I_{n+1}):  h \in H\}; \\
\r(H) & = &\max\{\rk(h-I_{n+1}) : h \in H \}; \\
\k(H) & = &\dim \Ker(\delta_R|_H).
\end{array}
\end{equation*}
The last parameter is justified since the set $\{v\in \F^n: \mu_R(v)\in \Z(R)\}$ is a subspace of $\F^n$ (see 
\cite[Proposition 2.1]{P}). 

We will proceed considering the possible values of $(\d(\Z(R)), \r(\Z(R))$ for $R\in \Delta_4(\F)$ and working in the 
centralizer of a Jordan form as described in Proposition \ref{lem1}(b). Unfortunately,  the case  $\r(\Z(R))=2$ will require a 
different approach, based on the classification of the regular subgroups in $\Delta_3(\F)$ obtained in \cite{PT}.
In particular, we will make use of the following observation.

Any regular subgroup $\widetilde{R}\in \Delta_n(\F)$, $n\geq 2$, can be written as 
\begin{equation}\label{Rtilde}
\widetilde{R}=\RR(R,D)=\left\{ 
\begin{pmatrix} 1 & X & x_{n} \\ 0 & I_{n-1}+\delta_R(X) & DX^{\T} \\ 0 & 0 & 1\end{pmatrix}: X \in \F^{n-1}, \; x_n \in 
\F \right\},
\end{equation}
for some  $R\in  \Delta_{n-1}(\F)$  and some square matrix $D\in \Mat_{n-1}(\F)$ such that
\begin{equation}\label{dt}
D\delta_R(e_i)^{\T} e_j^{\T}=\delta_R(e_j)De_i^{\T}\qquad \textrm{for all } i,j=1,\ldots,n-1
\end{equation}
($\{e_1,\ldots,e_{n-1}\}$ denotes the canonical basis of $\F^{n-1}$), see \cite{P}. In particular, we can take 
$R=\Tr_{n-1}$: in this case any square matrix $D \in \Mat_{n-1}$ satisfies \eqref{dt}. We set
\begin{equation}\label{eqRD}
 \RR_D:=\RR(\Tr_{n-1},D)=\left\{ 
\begin{pmatrix} 1 & X & x_{n} \\ 0 & I_{n-1} & DX^{\T} \\ 0 & 0 & 1\end{pmatrix}: X \in \F^{n-1}, \; x_n \in \F 
\right\}.
\end{equation}

The study of these subgroups $\RR_D$ is justified mainly because of their connection with projectively congruent matrices.

\begin{definition}\cite{W1}\label{projcong}
Two matrices $A,B\in  \Mat_n(\F)$ are said to be \emph{projectively congruent} if $PAP^{\T}=\lambda B$, for some non-zero element $\lambda\in \F^\ast$ and some invertible matrix $P\in \GL_n(\F)$.
\end{definition}

\begin{lemma}\cite[Lemma 3.2]{P}\label{RD}
Given two matrices $A,B\in \Mat_{n-1}(\F)$, the subgroups $\RR_A$ and $\RR_B$ are 
conjugate in $\AGL_n(\F)$ if and only if $A$ and $B$ are projectively congruent. 
\end{lemma}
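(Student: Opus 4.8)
The plan is to prove the two directions of the equivalence separately, exploiting the explicit block form \eqref{eqRD} of $\RR_D$ and the description of conjugacy in $\Delta_n(\F)$ recalled above, namely that $R_1,R_2\in\Delta_n(\F)$ are conjugate in $\AGL_n(\F)$ precisely when there is $g=\diag(1,\bar g)$ with $\bar g\in\GL_n(\F)$ realizing the conjugation.

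First I would handle the ``if'' direction. Assume $PAP^{\T}=\lambda B$ with $P\in\GL_{n-1}(\F)$ and $\lambda\in\F^\ast$. I would look for a conjugating matrix of block-diagonal shape $\bar g=\diag(\mu,\, Q,\, \nu)\in\GL_n(\F)$, where $\mu,\nu\in\F^\ast$ are scalars acting on the first and last coordinates and $Q\in\GL_{n-1}(\F)$ acts on the middle block; conjugating a generic element of $\RR_A$ (the one with parameters $X\in\F^{n-1}$, $x_n\in\F$) by $\diag(1,\bar g)$ produces again a matrix of the form \eqref{eqRD} with first row data $(\mu^{-1}X Q,\ \nu^{-1}x_n)$ — up to a harmless reparametrisation this is an arbitrary element of $\F^{n-1}\times\F$ — and with the $(2,3)$-block equal to $\mu^{-1}\nu\, Q^{-1} A (XQ)^{\T}\cdot$(scalar bookkeeping). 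Chasing the substitutions, the condition that this coincides with $\RR_B$ for all $X$ reduces to an identity of the form $Q^{-1}A (Q^{\T})^{-1} = (\text{scalar})\, B$, i.e. $A$ and $B$ projectively congruent via $P = Q^{-1}$ (or $Q^\T$), and the free scalars $\mu,\nu$ absorb $\lambda$. So from $PAP^{\T}=\lambda B$ one reads off a valid $\bar g$, proving $\RR_A$ and $\RR_B$ conjugate.

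Conversely, suppose $\RR_A^{\,g}=\RR_B$ for some $g=\diag(1,\bar g)$, $\bar g\in\GL_{n-1+1}(\F)=\GL_n(\F)$. Here the main work is to show that $\bar g$ must respect the flag $\langle e_1,\dots,e_{n-1}\rangle\subset\langle e_1,\dots,e_n\rangle$ underlying the form \eqref{eqRD}, so that $\bar g$ is forced into the block-triangular (indeed essentially block-diagonal) shape used above. I would identify the last coordinate direction and the middle subspace intrinsically from the group $\RR_D$: e.g. the translation subgroup of $\RR_D$ (elements with $X=0$) singles out $e_n$, while the image of $\delta$ and the commutator/centre structure pin down the middle block; since conjugation by $g$ sends these canonical subspaces of $\RR_A$ to the corresponding ones of $\RR_B$, the matrix $\bar g$ is forced to have the required block form. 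Once $\bar g=\diag(\mu,Q,\nu)$ (possibly after a further harmless adjustment), the computation of the previous paragraph runs in reverse and yields $Q^{-1}A(Q^{\T})^{-1}=(\mu^{-1}\nu)B$, i.e. $A$ and $B$ are projectively congruent.

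The main obstacle I anticipate is precisely this last normalisation step: showing that an \emph{arbitrary} conjugating element $g=\diag(1,\bar g)$ can be replaced by one of the clean block-diagonal form $\diag(1,\mu,Q,\nu)$. One expects that $\bar g$ a priori only needs to preserve the relevant flag (so it is block upper triangular, with possible off-diagonal blocks), and one must argue that the off-diagonal contributions either vanish or can be cleared by composing with an inner automorphism of $\RR_B$ without changing the resulting congruence class of $A$. This is the kind of step that in \cite{P} is presumably dispatched with a short direct matrix computation using the explicit shape \eqref{eqRD} together with \eqref{dt} for $R=\Tr_{n-1}$; I would carry it out by writing $\bar g$ in block form, imposing $\RR_A^{g}\subseteq\AGL$-matrices of the shape \eqref{eqRD}, and solving the resulting linear constraints on the blocks of $\bar g$.
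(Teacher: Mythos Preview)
This lemma is not proved in the present paper at all: it is quoted from \cite[Lemma~3.2]{P} and invoked as a known result, so there is no argument here against which to compare your proposal.

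On the content of your sketch, two remarks. First, there is a dimension slip in the easy direction: $\bar g\in\GL_n(\F)$ decomposes into blocks of sizes $(n-1)+1$, not $1+(n-1)+1$, so the correct ansatz is $g=\diag(1,Q,\nu)$ with $Q\in\GL_{n-1}(\F)$ and $\nu\in\F^\ast$ (there is no separate scalar $\mu$). With this correction the conjugation gives $\nu\,Q^{-1}A(Q^{\T})^{-1}=B$, and one reads off $P=Q^{-1}$, $\lambda=\nu^{-1}$ as you intend. Second, for the converse, the specific invariant you propose to pin down $e_n$ does not work in general: the set of pure translations in $\RR_D$ is $\{(X,x_n):DX^{\T}=0\}$, which coincides with $\langle e_n\rangle$ only when $\rk D=n-1$. (In the nonabelian case the derived subgroup of $\RR_D$ does equal $\langle e_n\rangle$, but in the symmetric case one needs something else.) Your fallback plan---write a general $\bar g$ in $(n-1)+1$ block form, impose that every conjugate have the shape \eqref{eqRD}, and then absorb any surviving off-diagonal block by an element of $\RR_B$---is the right way to proceed and is presumably how \cite{P} handles it.
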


Furthermore, it is easy to see that
\begin{eqnarray*}
\d(\RR_D)& =& \left\{ \begin{array}{cl} 2& \textrm{ if 
 } D^{\T}=-D \textrm{ and }D \textrm{ has zero diagonal}, \\ 3 & \textrm{ otherwise},
\end{array}\right.\\
\r(\RR_D) & =& \left\{ \begin{array}{cl} 2& \textrm{ if 
 } D\neq 0, \\ 1 & \textrm{ if } D=0,
\end{array}\right.\\
\k(\RR_D) & =& n-\rk(D).
\end{eqnarray*}

A given  nilpotent associative algebra $N$ of dimension $n$ can be embedded into $\Mat_{n+1}(\F)$ via 
$$m \mapsto \begin{pmatrix} 0 & m_{\B}\\ 0 & \delta(m)  \end{pmatrix},$$
where, for any $m\in N$, $m_{\B}$ and $\delta(m)$ denote, respectively, the coordinate row vector of $m$ and the matrix
of the right multiplication by $m$ with respect to a fixed basis $\B$ of $N$ over $\F$.
Identifying $N$ with its image, $\A=\F I_{n+1}+N$ is a split local subalgebra of $\Mat_{n+1}(\F)$ with Jacobson radical $\J(\A)=N$. The subset 
$R=\{I_{n+1}+m : m \in N\}\subseteq \A$ consists  of invertible matrices and is closed under multiplication,
since $\delta(m_1 \delta(m_2))=\delta(m_1)\delta(m_2)$ for all $m_1,m_2\in N$. Hence, $R$ is a regular subgroup lying in $\Delta_n(\F)$, by \cite[Lemma 2.1]{PT}. On the other hand, given a regular subgroup $R\in\Delta_n(\F)$, we have that 
\begin{equation}\label{LR}
\A_R=\F I_{n+1}+R
\end{equation}
is a split local $\F$-algebra of dimension $n$, by \cite[Theorem 3.3]{PT}. Notice that 
set
$$N=\J(\A_R)=R-I_{n+1}= \left\{\begin{pmatrix} 0& v\\ 0 & \delta_R(v)   \end{pmatrix}:   v\in 
\F^n\right\}$$
is a nilpotent  associative $\F$-algebra of dimension $n$.

\begin{proposition}\cite{P,PT}\label{prop34}
Let  $R_1,R_2\in \Delta_n(\F)$ and $\A_{R_1},\A_{R_2}$ be as in \eqref{LR}. 
Then the following conditions are equivalent:
\begin{itemize}
\item[(\rm{a})] the subgroups $R_1$ and $R_2$ are conjugate in $\AGL_n(\F)$;
\item[(\rm{b})] the split local algebras $\A_{R_1}$ and $\A_{R_2}$  are isomorphic;
\item[(\rm{c})] the nilpotent associative algebras $\J(\A_{R_1})$ and $\J(\A_{R_2})$  are isomorphic.
\end{itemize}
\end{proposition}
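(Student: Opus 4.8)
The plan is to prove Proposition~\ref{prop34} by assembling the three equivalences from the structural dictionary already recalled above: the correspondence between regular subgroups $R\in\Delta_n(\F)$, split local subalgebras $\A_R=\F I_{n+1}+R$ of $\Mat_{n+1}(\F)$, and nilpotent associative algebras $N=\J(\A_R)=R-I_{n+1}$. I would organise the argument around the cycle $(\mathrm{a})\Rightarrow(\mathrm{b})\Rightarrow(\mathrm{c})\Rightarrow(\mathrm{a})$, since each implication is essentially a routine transcription once the right language is fixed, but the bookkeeping about where the isomorphisms live needs care.

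First I would handle $(\mathrm{a})\Rightarrow(\mathrm{b})$. If $R_1$ and $R_2$ are conjugate in $\AGL_n(\F)$, then by the remark recalled before Definition~\ref{projcong} (from \cite[Proposition~3.4]{PT}) we may take the conjugating matrix of the form $g=\diag(1,\bar g)$ with $\bar g\in\GL_n(\F)$, so $g^{-1}R_1 g=R_2$. Conjugation by $g$ is an $\F$-algebra automorphism of $\Mat_{n+1}(\F)$ fixing $I_{n+1}$, hence it sends $\A_{R_1}=\F I_{n+1}+R_1$ onto $\F I_{n+1}+R_2=\A_{R_2}$, giving an isomorphism of the split local algebras. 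For $(\mathrm{b})\Rightarrow(\mathrm{c})$: an isomorphism $\varphi\colon\A_{R_1}\to\A_{R_2}$ of $\F$-algebras necessarily carries the Jacobson radical to the Jacobson radical, i.e.\ $\varphi(\J(\A_{R_1}))=\J(\A_{R_2})$, and restricting $\varphi$ gives an isomorphism of the nilpotent associative algebras $\J(\A_{R_1})\cong\J(\A_{R_2})$ (these are non-unital algebras, so ``isomorphism'' here means $\F$-linear bijection respecting multiplication).

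The implication $(\mathrm{c})\Rightarrow(\mathrm{a})$ is the one I expect to carry the real content, and it is where I would lean on \cite[Proposition~2.2]{P} as cited in the introduction. Suppose $\psi\colon N_1\to N_2$ is an isomorphism of the nilpotent associative algebras $N_i=R_i-I_{n+1}$. One first upgrades $\psi$ to an isomorphism of the unitalizations $\F\oplus N_i$, equivalently of the split local algebras $\A_{R_i}$, by sending $I_{n+1}\mapsto I_{n+1}$; the point is that $\A_{R_i}$ is exactly $\F I_{n+1}\oplus N_i$ as an algebra since $N_i$ is a nilpotent ideal and $\F I_{n+1}$ a complement. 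Now $\A_{R_1}$ and $\A_{R_2}$ are both $(n+1)$-dimensional subalgebras of $\Mat_{n+1}(\F)$ containing $I_{n+1}$, acting on $\F^{n+1}$; by the Skolem--Noether-type argument (or directly, since both algebras act faithfully and the module $\F^{n+1}$ is determined up to the algebra isomorphism — one checks both are the regular-type module coming from the embedding described before Proposition~\ref{prop34}), an abstract isomorphism $\A_{R_1}\to\A_{R_2}$ is realised by conjugation by some $g\in\GL_{n+1}(\F)$. Finally one verifies that $g$ can be adjusted to fix the affine-point structure, i.e.\ chosen of the form $\diag(1,\bar g)$: indeed $R_i$ is recovered from $\A_{R_i}$ as the set of elements congruent to $I_{n+1}$ modulo $\J(\A_{R_i})$ that are ``unipotent of the affine shape'' \eqref{tauv}, and conjugation by $g$ sends this set for $R_1$ to that for $R_2$; comparing first rows/columns forces $g$ into the stabiliser of the flag defining $\AGL_n(\F)\le\GL_{n+1}(\F)$, so $R_1$ and $R_2$ are conjugate in $\AGL_n(\F)$. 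The main obstacle is precisely this last normalisation step — showing the realising matrix $g$ may be taken in $\AGL_n(\F)$ rather than merely in $\GL_{n+1}(\F)$ — and it is handled by invoking \cite[Proposition~3.4]{PT} together with the identification of $R_i$ inside $\A_{R_i}$ recorded in the discussion around \eqref{LR}. With all three implications in hand the equivalence is complete.
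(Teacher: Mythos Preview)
The paper does not prove this proposition: it is quoted from \cite{P,PT} without argument, so there is no in-paper proof to compare against. Your reconstruction is essentially correct and follows the natural route; a couple of remarks on the one step that is genuinely delicate.

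In $(\mathrm{c})\Rightarrow(\mathrm{a})$, the passage from an abstract isomorphism $\A_{R_1}\cong\A_{R_2}$ to a conjugating matrix $g\in\GL_{n+1}(\F)$ is cleanly justified by your observation that $\F^{n+1}$ is the (right) regular module for each $\A_{R_i}$: indeed $e_0=(1,0,\dots,0)$ is a cyclic vector with trivial annihilator, since $e_0\cdot m=(0,m_{\B})$ for $m\in N_i$. This is the substantive point and you have it.

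The final normalisation ``comparing first rows/columns forces $g$ into $\AGL_n(\F)$'' deserves one more line. Concretely: every $r\in R_i$ has first column $e_1^{\T}$, so $e_1^{\T}$ spans the common fixed space of $R_i$ acting on column vectors (the condition $v\cdot b=0$ for all $v\in\F^n$ forces $b=0$). Since $g^{-1}R_1g=R_2$, the vector $ge_1^{\T}$ is fixed by all of $R_1$, hence $ge_1^{\T}\in\F e_1^{\T}$; rescaling $g$ (which does not affect conjugation) makes its first column equal to $e_1^{\T}$, i.e.\ $g\in\AGL_n(\F)$. Your appeal to \cite[Proposition~3.4]{PT} then upgrades this to $g=\diag(1,\bar g)$ if desired, but for statement~(a) the previous sentence already suffices.
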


For sake of brevity, we write
\begin{equation*}\label{Ubreve}
R=\begin{pmatrix}
1&\begin{array}{cccc}
x_1 & x_2 & \ldots & x_n   
  \end{array}
\\
0&I_n+\delta_R(x_1 , x_2,  \ldots,  x_n )
\end{pmatrix}
\end{equation*}
to indicate the regular subgroup
$$R=\left\{\begin{pmatrix}
1&\begin{array}{cccc}
x_1 & x_2 & \ldots & x_n   
  \end{array}
\\
0&I_n+\delta_R(x_1 , x_2,  \ldots,  x_n )
\end{pmatrix}:  x_1,\ldots,x_n\in \F\right\}.$$
For all $i\le n$, we denote by $X_i$ the matrix of $R-I_{n+1}$ obtained taking $x_i=1$ and $x_j=0$ for all $j\neq i$.
Finally, $E_{i,j}$ is the elementary matrix having $1$ at position $(i,j)$, $0$ elsewhere.

\section{The regular subgroups in $\D$}\label{sec2}

For classifying the regular subgroups  $R\in \D$, where $\F$ is any field, 
we consider the possible values of $(\d(\Z(R)),\r(\Z(R)))$. 
By Proposition \ref{lem1} 
the subgroup $R$ is unipotent and its center contains an element conjugate in $\AGL_4(\F)$ 
to  one of the following Jordan forms $z$:
\begin{center}
\begin{tabular}{ccc}
$z$ & $\d(\Z(R))$ & $\r(\Z(R))$\\\hline 
$J_5$ & $5$ & $4$\\
$\diag(J_4,I_1)$ & $4$ & $3$\\
$\diag(J_3,J_2)$ & $3$ & $3$  \\
$\diag(J_3,I_2)$ & $3$ & $2$\\
$\diag(J_2,J_2,I_1)$ & $2$ & $2$\\
$\diag(J_2,I_3)$ & $2$ & $1$
\end{tabular}
\end{center}
Some of these cases have been already studied in \cite{PT}: we recall here the results obtained in 
Lemmas 5.2, 5.4  and 7.4. First of all, if $\d(\Z(R))\geq 4$, then $R$ is abelian. In particular, if 
$\d(R)=5$, then $R$ is conjugate to 
$$S_{(5)}=\begin{pmatrix}
1 & x_1 & x_2 & x_3 & x_4 \\
0 &   1 & x_1 & x_2 & x_3\\
0 &   0 &   1 & x_1 & x_2\\
0 &   0 &   0 &   1 & x_1 \\
0 &   0 &   0 &   0 &   1
          \end{pmatrix}.$$
If   $\d(R)=4$, we have two conjugacy classes of regular subgroups:
when $\k(R)=2$ the subgroup $R$ is conjugate to $S_{(4,1)}$ and
when $\k(R)=1$ it is conjugate to $S_{(3,2)}^{\s}$, where
$$S_{(4,1)}=\begin{pmatrix}
1 & x_1 & x_2 & x_3 & x_4 \\
0 &   1 & x_1 & x_2 & 0\\
0 &   0 &   1 & x_1 & 0\\
0 &   0 &   0 &   1 & 0 \\
0 &   0 &   0 &   0 &   1
\end{pmatrix} \quad \textrm{ and } \quad S_{(3,2)}^{\s}=\begin{pmatrix}
1 & x_1 & x_2 & x_3 & x_4 \\
0 &   1 & x_1 & 0 & x_2\\
0 &   0 &   1 & 0 & x_1\\
0 &   0 &   0 &   1 & x_3 \\
0 &   0 &   0 &   0 &   1
\end{pmatrix}.$$
Now, if $\d(\Z(R))=\r(\Z(R))=3$, then again $R$ is abelian.  Conjugating the subgroups obtained in \cite[Lemma 7.4]{PT} by 
$g=\diag(I_2,E_{1,2}+E_{2,1},1)$, we obtain that  $R$ is conjugate to
\begin{equation}\label{U1}
U_1(\alpha,\beta)=\begin{pmatrix}
1 & x_1 & x_2 & x_3 & x_4\\
0 & 1 & 0 & x_1 & x_2\\
0 & 0 & 1 & \beta x_2 & x_1+\alpha x_2\\
0 & 0 & 0 & 1 & 0\\
0 & 0 & 0 & 0 & 1\\
\end{pmatrix}
\end{equation}
for some $\alpha,\beta\in \F$.

If $\d(\Z(R))=3$ and $\r(\Z(R))=2$, we may assume that $z=\diag(J_3 , 
I_2)^g\in \Z(R)$, where $g=\diag(I_2,E_{1,3}+E_{2,1}+E_{3,2})$. So, working  in $\C_{\AGL_4(\F)}(z)$ and 
using the linearity of 
$\delta_R$ we obtain that
$R$ is 
conjugate to a subgroup $\RR_D$, where 
$$D=\begin{pmatrix} 1 & 0 & 0 \\ 0 & \beta_2 & \beta_3 \\ 0 & \gamma_2& \gamma_3
    \end{pmatrix},$$
for some $\beta_2,\beta_3,\gamma_2,\gamma_3\in \F$.

If $\d(\Z(R))=\r(\Z(R))=2$, we may assume that the center of $R$ contains the 
element $z=\diag(J_2,J_2,I_1)^g$, where $g=\diag(1,E_{1,2}+E_{2,1},E_{1,2}+E_{2,1})$.
Again, working in $\C_{\AGL_4(\F)}(z)$ and using the linearity of $\delta_R$ we obtain that $R$ is conjugate to the 
subgroup
\begin{equation}\label{U2}
U_2(\alpha_1,\alpha_3,\gamma_1,\gamma_3,\zeta)=\begin{pmatrix}
 1 &                 x_1 &                 x_2 &                 x_3 &                 x_4 \\
 0 &                  1 &             \zeta x_1 &                  0 &   \alpha_1 x_1 +x_2+ \alpha_3 x_3 \\ 
 0 &                  0 &                  1 &                  0 &                 x_1 \\ 
 0 &                  0 &                  0 &                  1 &      \gamma_1 x_1 +\gamma_3 x_3 \\ 
 0 &                  0 &                  0 &                  0 &                  1 
  \end{pmatrix}
\end{equation}
for some $\alpha_1,\alpha_3,\gamma_1,\gamma_3,\zeta\in \F$.

Finally, if $R$ is abelian and $(\d(R),\r(R))=(2,1)$, then  $R=\Tr_4$ by \cite[Lemma 5.3]{PT}.

\section{The abelian case}\label{ab}

In this section we assume that $R\in \D$ is abelian.
In view of the results recalled in Section \ref{sec2}, we are left to determine the conjugacy classes of subgroups of 
type $U_1(\alpha,\beta)$, $U_2(\alpha_1,\alpha_3,\gamma_1,\gamma_3,\zeta)$ and $\RR_D$ where $D\in \Mat_3(\F)$ is a 
symmetric matrix.
We start with the subgroups $U_1(\alpha,\beta)$.

\begin{lemma}\label{J3J2}
Let $R$ be an abelian regular subgroup of $\AGL_4(\F)$ such that $\d(R)=\r(R)=3$. 
Then  $R$ is conjugate to exactly one of the following subgroups:
$$U_1(0,\rho) \; \textrm{ with } \rho \in \F^\square\cup \{0\}$$
if $\char\F\neq 2$;
$$U_1(1,\epsilon) \; \textrm{ with  } \epsilon \in \H(\F)\quad  \textrm{ or } \quad U_1(0,\rho) \; \textrm{ with } \rho \in \K(\F)$$
if $\char \F=2$.
\end{lemma}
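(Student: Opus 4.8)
The plan is to start from the family $U_1(\alpha,\beta)$ obtained in Section~\ref{sec2}, which already captures every abelian $R$ with $\d(R)=\r(R)=3$ up to conjugacy, and to determine precisely when $U_1(\alpha,\beta)$ and $U_1(\alpha',\beta')$ are conjugate. By Proposition~\ref{prop34} (or the remark preceding it) it suffices to work with conjugating matrices of the shape $g=\diag(1,\bar g)$, $\bar g\in\GL_4(\F)$. First I would pin down which conjugations are available: since we must preserve the normal form data, $\bar g$ has to centralize (up to scalars compatible with the unipotent structure) the relevant Jordan element $z\in\Z(R)$ produced by Proposition~\ref{lem1}(b), so $\bar g$ ranges over a comparatively small subgroup of $\GL_4(\F)$. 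Parametrising that stabiliser explicitly and computing $U_1(\alpha,\beta)^g$ gives a transformed subgroup of the form $U_1(\alpha',\beta')$ with $(\alpha',\beta')$ an explicit rational function of the entries of $\bar g$ and of $(\alpha,\beta)$.

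The next step is to read off the orbits. I expect the action to fall into two qualitatively different regimes according to whether the ``$\beta$'' coordinate can be scaled freely or only up to squares. Concretely, I anticipate that one may first normalise $\beta$ within its square class, so that the surviving invariant of $\beta$ is its class in $(\F^\ast)^2\backslash\F^\ast$ together with the case $\beta=0$; this is where the transversal $\F^\square$ enters. Then, with $\beta$ fixed, the residual conjugations act on $\alpha$, and one checks that in characteristic $\neq 2$ the parameter $\alpha$ can always be killed (so $\alpha=0$ and the list is exactly $U_1(0,\rho)$, $\rho\in\F^\square\cup\{0\}$), whereas in characteristic $2$ the obstruction to removing $\alpha$ is governed by the Artin--Schreier group $\G(\F)=\{\lambda^2+\lambda:\lambda\in\F\}$, forcing the split into $\alpha=0$ with $\beta=\rho$ running over $\K(\F)$ (the $\star$-classes of $\F^\square$) and $\alpha=1$ with $\epsilon\in\H(\F)$ a transversal of $\G(\F)$. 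I would verify the associativity/abelianness constraints $\delta_R(v\delta_R(w))=\delta_R(v)\delta_R(w)$ and $v\delta_R(w)=w\delta_R(v)$ along the way to confirm that all listed $(\alpha,\beta)$ indeed give subgroups in $\D$ and that the char.~$2$ versus char.~$\neq 2$ dichotomy is the only restriction.

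To finish, I would establish that the representatives in each list are pairwise non-conjugate. The coarse invariants $\d,\r,\k$ of $R$ and $\Z(R)$ will not separate them all (they are built so that $\d=\r=3$ throughout), so I would introduce a finer invariant: for instance, looking at how the square (or, in char.~$2$, a suitable quadratic-type) map behaves on $R-I_5$, one extracts from $U_1(0,\rho)$ a well-defined class of $\rho$ modulo squares (resp.\ modulo $\star$), and from $U_1(1,\epsilon)$ a well-defined class of $\epsilon$ modulo $\G(\F)$; one also checks that the ``$\alpha=0$'' and ``$\alpha=1$'' families are distinguished by whether a certain associated element is a square in the algebra. Combining the computation of the conjugation action (surjectivity onto each orbit) with these invariants (injectivity on representatives) yields the stated classification.

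The main obstacle I expect is the characteristic~$2$ bookkeeping: tracking the Artin--Schreier obstruction cleanly, showing that the relevant invariant of $\epsilon$ is exactly its $\G(\F)$-class and not something finer or coarser, and simultaneously confirming that the $\beta=\rho$ coordinate in the $\alpha=0$ branch is classified by $\star$ rather than by ordinary square classes — i.e.\ correctly identifying which conjugations survive once $\alpha$ has been normalised. The char.~$\neq 2$ case should be routine linear algebra by comparison, essentially a completion-of-the-square argument to eliminate $\alpha$ together with a scaling argument to reduce $\beta$ to $\F^\square\cup\{0\}$.
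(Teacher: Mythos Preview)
Your plan is workable but takes a different route from the paper. Rather than parametrising the conjugating matrices $\bar g$ and computing $U_1(\alpha,\beta)^g$ explicitly, the paper invokes Proposition~\ref{prop34} to translate conjugacy into isomorphism of the associated algebras $\A_{U_1(\alpha,\beta)}$. For each $(\alpha,\beta)$ it writes down an explicit epimorphism $\Psi\colon\F[t_1,t_2]\to\A_{U_1(\alpha,\beta)}$ and reads off $\Ker\Psi$; the kernel is visibly determined by the discriminant $\alpha^2+4\beta$ modulo squares (when $\char\F\neq 2$) or by the Artin--Schreier class of $\beta/\alpha^2$ (when $\char\F=2$ and $\alpha\neq 0$). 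This avoids solving for $\bar g$ altogether: one never needs to describe the full orbit, only to exhibit one isomorphism to a chosen representative and then check that distinct representatives give non-isomorphic quotients. Your direct approach would reproduce the same answer but with more linear algebra.

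Two technical points to watch if you carry out your plan. First, the assertion that $\bar g$ must centralise a fixed Jordan element $z$ is not correct as stated: conjugation sends $z\in R_1$ to an element of $R_2$ of the same Jordan type, but not necessarily to the distinguished $z$ you started from. What genuinely constrains $\bar g$ is that it must preserve intrinsic features of the $U_1$ shape, such as $\Ker\delta_R=\langle e_3,e_4\rangle$ and the image of $\delta_R$; imposing those gives the block structure you need. Second, in characteristic $\neq 2$ the square-class invariant is that of $\alpha^2+4\beta$, not of $\beta$ alone, so your proposed order (normalise $\beta$, then kill $\alpha$) should be reversed: complete the square first to eliminate $\alpha$, which replaces $\beta$ by $\beta+\alpha^2/4$, and only then scale the resulting $\beta$ into $\F^\square\cup\{0\}$.
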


\begin{proof}
By the previous argument, we can assume that $R=U_1(\alpha,\beta)$, as in \eqref{U1}, for some $\alpha,\beta \in \F$. 
First, suppose that $\char \F\neq 2$ and set $\Delta=\alpha^2+4\beta$. If $\Delta\neq 0$, write 
$\Delta=\omega^2\rho$ for some $\omega \in \F^\ast$ and a unique $\rho \in \F^\square$.
We can define an 
epimorphism (of algebras)
$\Psi: \F[t_1, t_2]\rightarrow \A_{U_1(\alpha,\beta)}$ by setting
\begin{equation*}
\Psi(t_1) =  \omega \rho X_1,\qquad
\Psi(t_2)  =  \alpha X_1- 2X_2.
\end{equation*}
We have $\Ker(\Psi)=\langle t_1^3,\; t_2^3,\; t_1^2-\rho t_2^2\rangle$.
The subgroup $U_1(\alpha,\beta)$ is conjugate to $U_1(0,\rho)$ by Proposition \ref{prop34}. Now, it is quite easy to 
see that
two subgroups $U_1(0,\rho_1)$  and $U_1(0,\rho_2)$ ($\rho_1,\rho_2\in \F^\square$) are conjugate in $\AGL_4(\F)$ if and 
only if $\rho_1=\rho_2$.

If $\alpha^2+4\beta=0$, we consider the epimorphism 
$\Psi: \F[t_1, t_2]\rightarrow \A_{U_1(\alpha,\beta)}$ defined by
\begin{equation*}
\Psi(t_1)  =   X_1,\qquad \Psi(t_2)  =  \alpha X_1- 2X_2, 
\end{equation*}
whose kernel is $\Ker(\Psi)=\langle t_1^3,\; t_1^2 t_2,\; t_2^2\rangle$.
Again by Proposition \ref{prop34}, $U_1(\alpha,\beta)$ is conjugate to $U_1(0,0)$ (which is not conjugate to 
$U_1(0,\rho)$, $\rho \in \F^\square$). 

Next, suppose that $\char \F=2$. If $\alpha\neq 0$, take $\epsilon \in \H(\F)$ such that 
$\epsilon+\frac{\beta}{\alpha^2}\in \G(\F)$.  Then, there exists $r\in \F$ such that 
$r^2+\alpha r+ \beta+\alpha^2\epsilon=0$ (a polynomial $t^2+ t +\lambda$ is reducible  in $\F[t]$ if and only if 
$\lambda \in \G(\F)$). In this case,
we can define an epimorphism
$\Psi: \F[t_1, t_2]\rightarrow \A_{U_1(\alpha,\beta)}$ by setting 
$$\Psi(t_1)=rX_1+X_2,\qquad \Psi(t_2)=\alpha X_1.$$
We obtain 
$\Ker(\Psi)=\langle t_1^3,\; t_2^3,\; t_1^2+t_1t_2+\epsilon t_2^2\rangle$, proving that $U_1(\alpha,\beta)$ is 
conjugate to $U_1(1,\epsilon)$.
Now, $U_1(1,\epsilon_1)$ and $U_1(1,\epsilon_2)$ are conjugate in $\AGL_4(\F)$ 
if and only if $\epsilon_1+\epsilon_2\in \G(\F)$.
It follows that $U(\alpha,\beta)$ is conjugate to exactly one subgroup $U_1(1,\epsilon)$ with $\epsilon \in \H(\F)$.

Finally, assume $\alpha=0$. For $\beta=0$ observe that $U_1(0,0)^g=U_1(0,1)$, where $g=I_5+E_{3,2}+E_{5,4}$. If 
$\beta\neq 0$, write $\beta=\omega^2\rho$ for some $\omega \in \F^\ast$ and a unique $\rho\in \F^\square$.
An epimorphism
$\Psi: \F[t_1, t_2]\rightarrow \A_{U_1(0,\beta)}$ can be defined by taking 
$$\Psi(t_1)=X_2,\qquad \Psi(t_2)=\omega X_1.$$
We have  $\Ker(\Psi)=\langle t_1^3,\; t_2^3,\; t_1^2+\rho t_2^2\rangle$ and so $U_1(0,\beta)$ is conjugate to 
$U_1(0,\rho)$.
Furthermore, given $\rho_1,\rho_2\in \F^\square$, the subgroup $U_1(0,\rho_1)$ is conjugate to $U_1(0,\rho_2)$ if and 
only if $\rho_1\star\rho_2$, that is if and only if $\rho_2=\frac{x_1^2+x_2^2\rho_1}{x_3^2+x_4^2\rho_1}$ for some $x_1,x_2,
x_3, x_4 \in \F$ such that $x_1x_4\neq x_2x_3$ (see the notation described in the Introduction).

To conclude we observe that subgroups $U_1(0,\rho)$, $\rho \in \K(\F)$, are not conjugate to subgroups 
$U_1(1,\epsilon)$, $\epsilon \in \H(\F)$.
\end{proof}

We can now classify the abelian regular subgroups of $\AGL_4(\F)$.

\begin{theorem}\label{main_ab}
Let $\F$ be a field. The distinct conjugacy classes of abelian regular subgroups of $\AGL_4(\F)$ can be represented by
the subgroups described in the first column of Table \ref{tab1}.
\end{theorem}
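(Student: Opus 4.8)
The plan is to assemble Theorem \ref{main_ab} from the case analysis on the pair $(\d(\Z(R)),\r(\Z(R)))$ carried out in Section \ref{sec2}, together with Lemma \ref{J3J2} and the classification of the subgroups $\RR_D$ with $D$ symmetric. Since $R\in\D$ is abelian and unipotent, Proposition \ref{lem1} tells us that, up to conjugation, $\Z(R)$ contains one of the six listed Jordan forms, and so $(\d(\Z(R)),\r(\Z(R)))$ takes one of six values. I would go through these in order of decreasing $\d(\Z(R))$. For $\d(\Z(R))\in\{4,5\}$ the discussion recalled from \cite{PT} already gives that $R$ is conjugate to exactly one of $S_{(5)}$, $S_{(4,1)}$, $S_{(3,2)}^\s$, distinguished by the invariants $\d(R)$ and $\k(R)$; these account for the first three rows of Table \ref{tab1}.

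Next I would treat $\d(\Z(R))=\r(\Z(R))=3$. By the argument of Section \ref{sec2}, $R$ is conjugate to some $U_1(\alpha,\beta)$, and Lemma \ref{J3J2} gives precisely one representative in the list $U_1(0,0)$, $U_1(0,\rho)$ ($\rho\in\F^\square$) if $\char\F\neq2$, and $U_1(1,\epsilon)$ ($\epsilon\in\H(\F)$), $U_1(0,\rho)$ ($\rho\in\K(\F)$) if $\char\F=2$; with the convention $1\in\F^\square$ and $0\in\H(\F)$ fixed in the Introduction, these collapse to the rows $U_1(0,0)$ (for $\char\F\neq2$), $U_1(0,\rho)$, $U_1(1,\epsilon)$ of Table \ref{tab1}. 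The two remaining cases with $\r(\Z(R))\le 2$ are where the subgroups $\RR_D$ enter. When $\d(\Z(R))=3$, $\r(\Z(R))=2$, Section \ref{sec2} shows $R$ is conjugate to $\RR_D$ with $D=\diag(1,0,0)+\left(\begin{smallmatrix}0&0\\0&*\end{smallmatrix}\right)$-type block; imposing that $R$ be abelian forces, via $v\delta_R(w)=w\delta_R(v)$, the matrix $D$ to be symmetric, and then Lemma \ref{RD} identifies the conjugacy class of $\RR_D$ with the projective congruence class of $D$, i.e.\ with an element of $\Pi_S(\F)$. When $\d(\Z(R))=\r(\Z(R))=2$, $R$ is conjugate to some $U_2(\alpha_1,\alpha_3,\gamma_1,\gamma_3,\zeta)$; the abelianness condition $v\delta_R(w)=w\delta_R(v)$ pins down the free parameters (in particular $\zeta$), and one checks directly that the resulting group is again conjugate to an $\RR_D$ with $D$ symmetric, so this case produces nothing new beyond the $\RR_D$ row. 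Finally $\d(\Z(R))=\r(\Z(R))=2$ with the degenerate possibility $(\d(R),\r(R))=(2,1)$, and the case $\d(\Z(R))=2$, $\r(\Z(R))=1$, both give $R=\Tr_4$ by \cite[Lemma 5.3]{PT}, which corresponds to $D=0$, already subsumed in the $\RR_D$ row of Table \ref{tab1}.

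It then remains to verify that the representatives obtained in different cases are pairwise nonconjugate, and that within the $\RR_D$ family distinct elements of $\Pi_S(\F)$ give nonconjugate groups. The latter is immediate from Lemma \ref{RD} and the definition of $\Pi_S(\F)$ as a transversal for projective congruence of symmetric matrices. For the cross-case nonconjugacy I would use the invariants $(\d(\Z(R)),\r(\Z(R)))$, $\d(R)$, $\r(R)$, $\k(R)$: these are constant on $\AGL_4(\F)$-conjugacy classes by construction, and a short table of their values on $S_{(5)}$, $S_{(4,1)}$, $S_{(3,2)}^\s$, the $U_1$'s, and $\RR_D$ separates all the rows (for instance $\RR_D$ with $D\neq0$ has $\r(\Z(R))\le2$ whereas every $U_1$ has $\r(\Z(R))=3$, and $\d(\RR_D)\le 3$ while $\d(S_{(5)})=5$). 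I expect the main obstacle to be the bookkeeping in the two $\r(\Z(R))=2$ cases: one must carefully extract from the general shape of $\RR_D$ and of $U_2$ the exact constraints imposed by commutativity and by condition \eqref{dt}, show that after these reductions only a symmetric $D\in\Mat_3(\F)$ survives, and confirm via Lemma \ref{RD} that no symmetric $D$ arising here falls outside $\Pi_S(\F)$ — i.e.\ that the $\RR_D$ row of Table \ref{tab1} is exactly the union of these two cases and not a proper subset. Everything else is a matter of reading off results already established in Section \ref{sec2}, Lemma \ref{J3J2}, and \cite{PT}.
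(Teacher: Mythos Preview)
Your proposal is correct and follows essentially the paper's approach: since $R$ is abelian, $(\d(\Z(R)),\r(\Z(R)))=(\d(R),\r(R))$, and the case analysis on this pair together with Section~\ref{sec2}, Lemma~\ref{J3J2}, and Lemma~\ref{RD} yields exactly the rows of Table~\ref{tab1}. One small correction in the $(2,2)$ case: the abelianness condition on $U_2(\alpha_1,\alpha_3,\gamma_1,\gamma_3,\zeta)$ only gives $\alpha_3=\gamma_1$, not $\zeta=0$; what forces $\zeta=0$ (and $\alpha_1=\gamma_3=0$) is $\d(R)=\d(\Z(R))=2$, since $\zeta\neq 0$ would make $X_1^2\neq 0$. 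The paper bypasses this computation by citing \cite[Lemma 7.7]{PT}, which also records that this case is nonempty only when $\char\F=2$; your sentence about a ``degenerate possibility $(\d(R),\r(R))=(2,1)$'' inside the $(2,2)$ case should simply be dropped, as for abelian $R$ the two pairs coincide.
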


\begin{proof}
Let $R\leq \AGL_4(\F)$ be an abelian regular subgroup. 
Clearly, $2\leq \d(R)\leq 5$. As seen in Section \ref{sec2}, if $\d(R)=5$ then 
$R$ is conjugate to $S_{(5)}$, and if $\d(R)=4$  then $R$ is conjugate either to $S_{(4,1)}$ or to $S_{(3,2)}^\s$.

Suppose that $\d(R)=3$. Then $2\leq \r(R)\leq 3$.
If $\r(R)=3$, we apply Lemma \ref{J3J2}: if $\char \F\neq 2$, $R$ is conjugate to 
$U_1(0,\rho)$ for a unique $\rho\in \F^\square\cup \{0\}$;
if $\char \F=2$, it is conjugate either to 
$U_1(1,\epsilon)$ for a unique $\epsilon \in \H(\F)$ or to $U_1(0,\rho)$ for a unique $\rho\in \K(\F)$.
If $\r(R)=2$, then $R$ is conjugate to a subgroup $\RR_{D}$ for some $D\in \Mat_3(\F)$.

Suppose $\d(R)=2$. Then $1\leq \r(R)\leq 2$. If $\r(R)=2$ we apply 
\cite[Lemma 7.7]{PT}, obtaining that $\char\F=2$ and $R$ is 
conjugate to $U_2(0,0,0,0,0)=\RR_D$, where
$D=E_{1,2}+E_{2,1}$.
If $\r(R)=1$, then $U=\Tr_4=\RR_{0}$, where $0\in \Mat_3(\F)$ is the zero matrix.  

Notice that in three cases, $R$ is conjugate to a subgroup $\RR_D$. 
By Lemma \ref{RD} the statement of the theorem is proved considering a set $\Pi_S(\F)$ of representatives for 
projective congruent 
classes of symmetric matrices in $\Mat_3(\F)$.
\end{proof}

We now provide explicit representatives for algebraically closed fields, for $\R$  and for finite fields.

\begin{cor}\label{ab_ac}
Let $\F=\overline\F$ be an algebraically closed field. If $\char \F\neq 2$, there are exactly $9$ distinct conjugacy 
classes of abelian regular subgroups of $\AGL_4(\F)$, that can be represented by
$$S_{(5)},\; S_{(4,1)},\; S_{(3,2)}^\s, \;U_1(0,0),\;U_1(0,1), \;\RR_{E_{1,1}},\; 
\RR_{E_{1,1}+E_{2,2}},\; \RR_{I_3},\; \Tr_4.$$
If $\char \F=2$, there are exactly $10$ distinct conjugacy classes of abelian regular subgroups of $\AGL_4(\F)$, that 
can be represented by
$$S_{(5)},\; S_{(4,1)},\;  S_{(3,2)}^\s,\; U_1(0,0),\; U_1(1,0),\;\RR_{E_{1,1}},\; \RR_{E_{1,1}+E_{2,2}},\;
\RR_{E_{2,3}+E_{3,2}},\; \RR_{I_3},\; \Tr_4.$$
\end{cor}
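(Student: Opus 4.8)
The plan is to specialize Theorem \ref{main_ab} to the case $\F = \overline{\F}$, so the task reduces to two bookkeeping problems: first, enumerating the set $\Pi_S(\F)$ of projective congruence classes of symmetric $3\times 3$ matrices over an algebraically closed field; second, pinning down which of the $U_1$-subgroups survive. For the $U_1$ part, when $\char\F \neq 2$ every $\rho \in \F^\square$ collapses (since $(\F^\ast)^2 = \F^\ast$, the transversal $\F^\square = \{1\}$), so $U_1(0,\rho)$ with $\rho\in\F^\square\cup\{0\}$ gives exactly the two subgroups $U_1(0,0)$ and $U_1(0,1)$. When $\char\F = 2$ the group $\G(\F) = \{\lambda^2+\lambda\}$ is all of $\F$ over an algebraically closed field (the polynomial $t^2+t+\lambda$ always splits), so $\H(\F) = \{0\}$ and the only $U_1(1,\epsilon)$ surviving is $U_1(1,0)$; likewise $\K(\F)$ is a single point, giving one subgroup $U_1(0,\rho)$, which by the $g = I_5+E_{3,2}+E_{5,4}$ computation in Lemma \ref{J3J2} is conjugate to $U_1(0,1)$ and hence (as noted there) conjugate to $U_1(0,0)$ — so in characteristic $2$ the $U_1$ family contributes the two classes $U_1(0,0)$ and $U_1(1,0)$.

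The heart of the argument is computing $\Pi_S(\F)$ for $\F$ algebraically closed. The key fact is that over an algebraically closed field two symmetric matrices are congruent if and only if they have the same rank, so ordinary congruence already gives the four classes represented by $0$, $E_{1,1}$, $E_{1,1}+E_{2,2}$, $I_3$ (ranks $0,1,2,3$); projective congruence can only coarsen this, but scaling does not change the rank, so these four classes remain distinct and $\Pi_S(\F) = \{0,\ E_{1,1},\ E_{1,1}+E_{2,2},\ I_3\}$ when $\char\F\neq 2$. In characteristic $2$ one must be more careful, because symmetric matrices with nonzero diagonal behave differently from alternating ones; here I would invoke the classification of symmetric bilinear forms over a perfect field of characteristic $2$ (or just directly reduce: a symmetric $3\times 3$ matrix is congruent either to a diagonal matrix — then rank is the invariant and one gets $0, E_{1,1}, E_{1,1}+E_{2,2}, I_3$ — or, if it is alternating, to $E_{2,3}+E_{3,2}$, which has rank $2$ but is not congruent to the diagonal rank-$2$ form $E_{1,1}+E_{2,2}$ since an alternating form stays alternating under congruence). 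Scaling by $\lambda\in\F^\ast$ preserves both the rank and the property of being alternating, so these five representatives are pairwise projectively inequivalent, giving $\Pi_S(\F) = \{0,\ E_{1,1},\ E_{1,1}+E_{2,2},\ E_{2,3}+E_{3,2},\ I_3\}$.

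Assembling the pieces: the subgroups independent of $\r(\Z(R))\le 2$ are $S_{(5)}, S_{(4,1)}, S_{(3,2)}^\s$ (three classes), then $U_1(0,0)$ and either $U_1(0,1)$ or $U_1(1,0)$ (two classes), then one $\RR_D$ for each element of $\Pi_S(\F)$ — four of them ($\Tr_4 = \RR_0$, $\RR_{E_{1,1}}$, $\RR_{E_{1,1}+E_{2,2}}$, $\RR_{I_3}$) when $\char\F\neq 2$ and five (adding $\RR_{E_{2,3}+E_{3,2}}$) when $\char\F = 2$. This yields $3+2+4 = 9$ classes in the first case and $3+2+5 = 10$ in the second, matching the stated lists. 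I expect the main obstacle to be the characteristic-$2$ congruence classification of symmetric $3\times 3$ matrices — in particular, correctly isolating the alternating form $E_{2,3}+E_{3,2}$ as a genuinely new class not projectively congruent to $E_{1,1}+E_{2,2}$ — whereas everything else is a direct specialization of the transversals $\F^\square$, $\H(\F)$, $\K(\F)$ to a point.
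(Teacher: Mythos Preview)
Your proposal is correct and follows essentially the same approach as the paper's proof: specialize Theorem~\ref{main_ab} by reducing the transversals $\F^\square$, $\H(\F)$, $\K(\F)$ to singletons and determining $\Pi_S(\F)$, then use the conjugacy $U_1(0,0)\sim U_1(0,1)$ from Lemma~\ref{J3J2} in characteristic~$2$. The only difference is that the paper obtains $\Pi_S(\F)$ by citing \cite[Theorem~2.1]{HS}, whereas you compute it directly via the rank (and, in characteristic~$2$, the alternating/non-alternating dichotomy) of symmetric $3\times 3$ matrices.
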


\begin{proof}
Since $\F$ is algebraically closed, $\F^\square=\{1\}$ and, when $\char \F=2$, $\H(\F)=\{0\}$ and $\K(\F)=\{1\}$. 
The set $\Pi_S(\F)$ can be obtained applying, for instance, \cite[Theorem 2.1]{HS}. Hence, the statement is proved noticing that, 
as observed in the proof of Lemma \ref{J3J2}, when $\char \F=2$ the subgroups $U_1(0,0)$ and $U_1(0,1)$ are conjugate.
\end{proof}

\begin{cor}\label{ab_real}
There are exactly $12$ distinct conjugacy 
classes of abelian regular subgroups of $\AGL_4(\R)$, that can be represented by
$$S_{(5)},\;\; S_{(4,1)},\;\; S_{(3,2)}^\s,\; \;U_1(0,0),\;\;U_1(0,1),\;\; U_1(0,-1),$$
$$\RR_{E_{1,1}},\;\;\RR_{E_{1,1}+E_{2,2}},\;\;\RR_{E_{1,1}-E_{2,2}},\;\;\RR_{E_{1,1}+E_{2,2}-E_{3,3}},\;\; \RR_{I_3},\;\; \Tr_4.$$
\end{cor}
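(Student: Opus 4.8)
The plan is to specialize Theorem \ref{main_ab} to $\F = \R$ and then work out, case by case, the explicit representatives appearing in Table \ref{tab1} when the abstract transversals $\F^\square$, $\H(\F)$, $\K(\F)$ and the set $\Pi_S(\R)$ of projective-congruence representatives for symmetric $3\times 3$ matrices are made concrete. Since $\char \R \neq 2$, the characteristic-two families $U_1(1,\epsilon)$ and $U_1(0,\rho)$ with $\rho \in \K(\F)$ do not occur, so by Theorem \ref{main_ab} every abelian regular subgroup of $\AGL_4(\R)$ is conjugate to exactly one of $S_{(5)}$, $S_{(4,1)}$, $S_{(3,2)}^\s$, one of the subgroups $U_1(0,\rho)$ with $\rho \in \R^\square \cup \{0\}$, or one of the subgroups $\RR_D$ with $D$ ranging over $\Pi_S(\R)$.

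First I would record that for $\F = \R$ one may take $\R^\square = \{1,-1\}$, since $(\R^\ast)^2 = \R_{>0}$ has index two in $\R^\ast$. This immediately yields the three subgroups $U_1(0,0)$, $U_1(0,1)$, $U_1(0,-1)$ on the $U_1$ row, pairwise nonconjugate by Lemma \ref{J3J2}. Next I would determine $\Pi_S(\R)$: a nonzero symmetric real matrix $A \in \Mat_3(\R)$ is congruent (via Sylvester's law of inertia) to a diagonal matrix with entries in $\{0, 1, -1\}$, and rescaling by $\lambda = -1$ identifies the signature $(p,q)$ with $(q,p)$. Sorting the diagonal so that the $1$'s come first, the orbits of rank $1$ are represented by $E_{1,1}$ (signatures $(1,0)\sim(0,1)$); of rank $2$ by $E_{1,1}+E_{2,2}$ (signature $(2,0)\sim(0,2)$) and $E_{1,1}-E_{2,2}$ (signature $(1,1)$, self-paired); of rank $3$ by $I_3$ (signature $(3,0)\sim(0,3)$) and $E_{1,1}+E_{2,2}-E_{3,3}$ (signature $(2,1)\sim(1,2)$); together with the zero matrix, which gives $\Tr_4 = \RR_0$. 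This is six matrices, hence six subgroups of type $\RR_D$, and by Lemma \ref{RD} they are pairwise nonconjugate precisely because the listed matrices lie in distinct projective-congruence classes.

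Finally I would assemble the count: the three $S$-type subgroups, the three $U_1(0,\rho)$ subgroups with $\rho \in \{0,1,-1\}$, and the six $\RR_D$ subgroups give $3 + 3 + 6 = 12$ conjugacy classes, matching the list in the statement. The main thing to get right is the enumeration of $\Pi_S(\R)$ — specifically, checking that rescaling by $\lambda \in \R^\ast$ identifies exactly the signature pairs $(p,q) \leftrightarrow (q,p)$ and nothing more, so that no two of the six chosen symmetric matrices are projectively congruent and the list is complete. Everything else is a direct substitution into Theorem \ref{main_ab}; one may also invoke \cite[Theorem 2.1]{HS} for $\Pi_S(\R)$ exactly as in the proof of Corollary \ref{ab_ac}.
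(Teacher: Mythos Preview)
Your proposal is correct and follows essentially the same approach as the paper: specialize Theorem \ref{main_ab} to $\F=\R$, take $\R^\square=\{1,-1\}$, and determine $\Pi_S(\R)$. The only difference is cosmetic: the paper cites \cite[Theorem 2.1]{HS} together with the observation that projective congruence over $\R$ amounts to ordinary congruence of $A$ with $\pm B$, whereas you spell this out directly via Sylvester's law of inertia and the signature identification $(p,q)\leftrightarrow(q,p)$; the two arguments are equivalent.
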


\begin{proof}
First of all, we can take $\R^\square=\{1,-1\}$. 
The set $\Pi_S(\F)$ can be obtained again applying  \cite[Theorem 2.1]{HS}, noticing that two matrices $A,B\in 
\Mat_n(\F)$ are  projectively congruent if and only if $A$ is congruent to $\pm B$.
\end{proof}

\begin{cor}\label{ab_f}
Let $\F=\F_q$ be a finite field.
There are exactly $11$ distinct conjugacy classes of abelian regular subgroups of $\AGL_4(q)$, that can be represented by:
\begin{itemize}
\item[(a)] for $q$  odd,
$$S_{(5)},\;\; S_{(4,1)},\;\;  S_{(3,2)}^\s,\;\; U_1(0,0),\;\; U_1(0,1),\;\; 
U_1(0 ,\xi),\;\;\RR_D,$$
where $x^2-\xi$ is a fixed irreducible polynomial of $\F_q[x]$ and
$$D\in \Pi_S(\F_q)=\{0,\; E_{1,1},\; E_{1,1}+E_{2,2},\;E_{1,1}+\xi E_{2,2},\; I_3\};$$
\item[(b)] for $q$  even, 
$$S_{(5)},\;\; S_{4,1},\;\;  S_{(3,2)}^\s,\;\; U_1(0,0),\;\; U_1(1,0),\;\;
U_1(1,\xi),\;\; \RR_D,$$
where $x^2+x+\xi$ is a fixed irreducible polynomial of $\F_q[x]$ and
 $$D\in \Pi_S(\F_q)=\{0,\; E_{1,1},\; E_{1,1}+E_{2,2},\; E_{2,3}+E_{3,2},\; I_3\}.$$
\end{itemize}
\end{cor}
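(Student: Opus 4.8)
By Theorem \ref{main_ab}, the conjugacy classes of abelian regular subgroups of $\AGL_4(\F)$ are represented by the subgroups in the first column of Table \ref{tab1}, so the whole task reduces to evaluating the various parameter sets ($\F^\square$, $\H(\F)$, $\K(\F)$, and $\Pi_S(\F)$) for a finite field $\F_q$, treating $q$ odd and $q$ even separately. First I would record the ``fixed'' entries $S_{(5)}$, $S_{(4,1)}$, $S_{(3,2)}^\s$ and $\Tr_4=\RR_0$, which appear for every $\F$. Then I would handle the $U_1$-family and the $\RR_D$-family.

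\textbf{The $U_1$ family.} For $q$ odd, $(\F_q^\ast)^2$ has index $2$, so we may take $\F_q^\square=\{1,\xi\}$ where $\xi$ is a non-square; equivalently $x^2-\xi$ is irreducible. By Lemma \ref{J3J2} the class $U_1(0,\rho)$ runs over $\rho\in\F_q^\square\cup\{0\}=\{0,1,\xi\}$, giving the three subgroups $U_1(0,0)$, $U_1(0,1)$, $U_1(0,\xi)$ (no further identifications, since Lemma \ref{J3J2} asserts these are pairwise nonconjugate). For $q$ even, $\G(\F_q)=\{\lambda^2+\lambda:\lambda\in\F_q\}$ is the image of the $\F_2$-linear Artin--Schreier map $\lambda\mapsto\lambda^2+\lambda$, which has kernel $\{0,1\}=\F_2$, so $\G(\F_q)$ has index $2$ in $\F_q$ and we may take $\H(\F_q)=\{0,\xi'\}$ for any $\xi'\notin\G(\F_q)$; then $x^2+x+\xi'$ is irreducible. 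Also in characteristic $2$ every element of $\F_q^\ast$ is a square (the Frobenius is bijective), so $(\F_q^\ast)^2=\F_q^\ast$, $\F_q^\square=\{1\}$, and the relation $\star$ on this one-element set is trivial, hence $\K(\F_q)=\{1\}$. Thus for $q$ even Lemma \ref{J3J2} gives exactly $U_1(1,0)$, $U_1(1,\xi')$ (from $\epsilon\in\H(\F_q)$) together with $U_1(0,1)$ (from $\rho\in\K(\F_q)=\{1\}$); and one checks, as in the proof of Lemma \ref{J3J2}, that $U_1(0,1)$ is conjugate to $U_1(1,0)$ via $g=I_5+E_{3,2}+E_{5,4}$ (note $U_1(0,0)^g=U_1(0,1)$ and, separately, $U_1(1,0)$ is handled by the $\alpha\neq 0$ branch with $\beta=0$), so the list collapses to $U_1(0,0)$, $U_1(1,0)$, $U_1(1,\xi')$. (Renaming $\xi'$ as $\xi$ gives the $\xi$ in the statement.)

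\textbf{The $\RR_D$ family.} By Lemma \ref{RD} the classes $\RR_D$ with $D$ symmetric in $\Mat_3(\F_q)$ are indexed by $\Pi_S(\F_q)$, a transversal for projective congruence of symmetric $3\times 3$ matrices. For $q$ odd one invokes the cited \cite[Theorem 2.1]{HS} (or the classical theory of quadratic forms over finite fields): a symmetric matrix is classified up to congruence by its rank and the square class of its nonzero determinant (of the nondegenerate part), and passing to projective congruence identifies a form with its non-square scalar multiple; counting gives $\Pi_S(\F_q)=\{0,\ E_{1,1},\ E_{1,1}+E_{2,2},\ E_{1,1}+\xi E_{2,2},\ I_3\}$ --- namely rank $0$; rank $1$ (a single class, since $E_{1,1}$ and $\xi E_{1,1}$ are projectively congruent); rank $2$ in the two non-projectively-congruent classes $E_{1,1}+E_{2,2}$ and $E_{1,1}+\xi E_{2,2}$; and rank $3$ (a single class $I_3$, as the determinant can be scaled into any square class). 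For $q$ even one must instead use the classification of symmetric bilinear forms in characteristic $2$, distinguishing alternating from non-alternating forms; the upshot (again from \cite{HS}) is $\Pi_S(\F_q)=\{0,\ E_{1,1},\ E_{1,1}+E_{2,2},\ E_{2,3}+E_{3,2},\ I_3\}$, where $E_{2,3}+E_{3,2}$ is the (rank $2$) alternating form that has no analogue when $\char\F\neq 2$ and replaces the ``second rank $2$ class'' of the odd case. Either way $|\Pi_S(\F_q)|=5$, and since the three fixed subgroups, the three $U_1$-subgroups, and the five $\RR_D$-subgroups are pairwise nonconjugate (their $(\d,\r,\k)$ invariants, together with the finer distinctions already made, separate them), we get $3+3+5=11$ classes. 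I do not expect a genuine obstacle here: the only non-formal input is the finite-field classification of symmetric matrices up to projective congruence, which is quoted from \cite{HS}; the remaining work is the bookkeeping of $\F_q^\square$, $\H(\F_q)$, $\K(\F_q)$ just carried out.
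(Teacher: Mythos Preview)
Your approach is exactly the paper's: invoke Theorem~\ref{main_ab} and then compute the parameter sets $\F_q^\square$, $\H(\F_q)$, $\K(\F_q)$, $\Pi_S(\F_q)$ over a finite field. The paper's own proof is two lines and simply records these sets, citing \cite[Theorem 4]{W1} (not \cite{HS}) for $\Pi_S(\F_q)$.

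There is, however, a slip in your handling of the $U_1$ family for even $q$. You write that ``$U_1(0,1)$ is conjugate to $U_1(1,0)$ via $g=I_5+E_{3,2}+E_{5,4}$''. This is false: Lemma~\ref{J3J2} states explicitly that the subgroups $U_1(0,\rho)$ with $\rho\in\K(\F)$ are \emph{not} conjugate to the subgroups $U_1(1,\epsilon)$ with $\epsilon\in\H(\F)$, and the element $g$ you quote satisfies $U_1(0,0)^g=U_1(0,1)$, not anything involving $U_1(1,0)$. What you need (and what your own parenthetical actually shows) is that $U_1(0,1)$ is conjugate to $U_1(0,0)$; this lets you rename the representative $U_1(0,1)$ coming from $\K(\F_q)=\{1\}$ as $U_1(0,0)$, matching the corollary. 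Your final list of three $U_1$-classes is correct, but the sentence as written asserts a conjugacy that does not hold and would, if taken literally, collapse the list to two classes and give only $10$ classes in total. Fix the typo and the argument is fine.
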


\begin{proof}
For $q$ odd we have $\F^\square=\{1,\xi\}$ and for $q$ even we have $\F^\square=\K(\F)=\{1\}$ and $\H(\F)=\{0,\xi\}$.
The set $\Pi_S(\F_q)$ has been determined in \cite[Theorem 4]{W1}.
\end{proof}

\section{The nonabelian case}\label{nonab}

In this section we assume that $R\in \D$ is nonabelian. We are reduced to study the cases
when $(\d(\Z(R)),\r(\Z(R)))=(3,2)$, $(2,2)$ or $(2,1)$.
We recall that if $(\d(\Z(R)),\r(\Z(R)))=(3,2)$ then $R$ is conjugate to a subgroup of shape $\RR_D$.

If $\d(\Z(R))=\r(\Z(R))=2$, then $R$ is conjugate to $U_2(\alpha_1,\alpha_3,\gamma_1,\gamma_3,\zeta)$ as in \eqref{U2}. 
Clearly, $U_2(\alpha_1,\alpha_3,\gamma_1,\gamma_3,0)=\RR_D$, where
$$D=\begin{pmatrix}
\alpha_1 & 1 & \alpha_3 \\ 1 & 0 & 0 \\ \gamma_1 & 0 & \gamma_3     
    \end{pmatrix}.$$
So, we have to consider now the case $\zeta\neq 0$. Notice that since $R$ is nonabelian, we must have $\gamma_1\neq 
\alpha_3$. Furthermore, we have $\d(R)=4$ and $\r(R)=3$.

\begin{lemma}\label{lemU2}
There are exactly $2$ conjugacy classes of nonabelian regular subgroups $R\in \D$ such that $\d(\Z(R))=\r(\Z(R))=2$, 
not conjugate to $\RR_A$ for any $A\in \Mat_3(\F)$. 
Such classes can be represented by the subgroups $U_2(0,1,0,1,1)$ and $U_2(0,1,0,0,1)$. 
\end{lemma}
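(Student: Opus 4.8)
The plan is to start from the general form $U_2(\alpha_1,\alpha_3,\gamma_1,\gamma_3,\zeta)$ in \eqref{U2} with $\zeta\neq 0$ and $\gamma_1\neq\alpha_3$, and reduce it by conjugation to a normal form. First I would rescale: conjugating by a diagonal matrix $g=\diag(1,a,b,c,d)$ in $\C_{\AGL_4(\F)}(z)$ acts on the parameters, and since $\zeta\neq 0$ I expect to be able to normalize $\zeta=1$ (after possibly also normalizing one of the other nonzero entries). The key structural fact to exploit is that $\d(R)=4$: by Proposition \ref{prop34} it suffices to identify the isomorphism type of $\A_R=\F I_5+R$, equivalently of the nilpotent algebra $N=R-I_5$, so I would set up an epimorphism $\Psi:\F[t_1,t_2]\to\A_R$ (as in the proof of Lemma \ref{J3J2}) sending $t_1,t_2$ to suitable linear combinations of $X_1,X_2,X_3$ and compute $\Ker(\Psi)$. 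Because $R$ is generated as an algebra by the two elements mapping to $t_1,t_2$ (this is what $\d(R)=4$ together with $\dim N=4$ should give, after checking $X_4$ is a product), the ideal $\Ker(\Psi)$ will be generated by the degree-$3$ monomials together with the two relations encoding the multiplication table of $X_1,X_2,X_3$ in terms of $X_4$.

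The second step is to carry out this normalization carefully enough to see exactly two surviving classes. I would use the constraints coming from associativity — i.e. $\delta_R(v\delta_R(w))=\delta_R(v)\delta_R(w)$, already built into the shape \eqref{U2} — to pin down the admissible parameter tuples, then use the residual conjugation freedom (the torus, plus any unipotent elements of $\C_{\AGL_4(\F)}(z)$ that preserve the shape) to move $(\alpha_1,\alpha_3,\gamma_1,\gamma_3)$ into one of two canonical positions, namely $(0,1,0,1)$ and $(0,1,0,0)$. The distinction between the two cases should come down to whether a certain quadratic/bilinear invariant of $N$ — concretely, whether $X_3^2$ (or the analogous product) lies in the span of the "top" generator $X_4$ or vanishes — which is exactly the $\gamma_3\neq 0$ versus $\gamma_3=0$ dichotomy after normalization. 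I would then exhibit the two kernels $\Ker(\Psi)$ explicitly to confirm $U_2(0,1,0,1,1)$ and $U_2(0,1,0,0,1)$ are the representatives, and check they are not isomorphic to each other (different relation ideals) and not of the form $\RR_A$ (their $z$ with $\zeta\neq 0$ forces noncommutativity in a way incompatible with $\RR_A$, or more cheaply: for $\RR_A$ one has $\d(\RR_A)\le 3$, whereas here $\d(R)=4$).

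The main obstacle I anticipate is the bookkeeping in the reduction: writing out the conjugation action of the centralizer $\C_{\AGL_4(\F)}(z)$ on the five parameters, imposing the subgroup (associativity) conditions, and verifying that after using all the freedom precisely the two tuples remain — with no field-dependence (the statement is uniform over all $\F$), which is a good sanity check but also means I cannot lean on, say, square-class arguments to collapse cases. A secondary subtlety is making sure $U_2(0,1,0,1,1)$ and $U_2(0,1,0,0,1)$ themselves are genuinely not conjugate; the cleanest route is to compare the ideals $\Ker(\Psi)$ directly, since $\F[t_1,t_2]$-ideals with different Hilbert data or different leading forms cannot be related by an algebra automorphism, and a change of generators $t_i\mapsto$ linear combination is easy to rule out by a short direct computation on the degree-$2$ part of the relations.
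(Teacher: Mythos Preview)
Your plan matches the paper's proof closely: start from $U_2(\alpha_1,\alpha_3,\gamma_1,\gamma_3,\zeta)$ with $\zeta\neq 0$ and $\gamma_1\neq\alpha_3$, split on $\gamma_3\neq 0$ versus $\gamma_3=0$, and in each case exhibit an explicit algebra isomorphism to $\A_{U_2(0,1,0,1,1)}$ or $\A_{U_2(0,1,0,0,1)}$, invoking Proposition~\ref{prop34}. Two small points are worth noting.

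First, you propose an epimorphism $\Psi:\F[t_1,t_2]\to\A_R$ ``as in Lemma~\ref{J3J2}''; but that lemma treats \emph{abelian} subgroups, and here $R$ is nonabelian, so the polynomial ring cannot surject onto $\A_R$. The paper avoids this by writing down a noncommutative presentation of the target algebra (e.g.\ $t_1^3=t_2^2=t_1t_2$, $t_2t_1=0$ for $U_2(0,1,0,1,1)$) and constructing an isomorphism $\Psi:\A\to\A_R$ directly, rather than passing through a free object. Your intended computation goes through once you make this adjustment.

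Second, to separate the two classes the paper does not compare relation ideals; it simply computes the invariant $\k(R)$, obtaining $\k(R)=1$ when $\gamma_3\neq 0$ and $\k(R)=2$ when $\gamma_3=0$. This is shorter and field-independent. Your exclusion of $\RR_A$ via $\d(\RR_A)\le 3<4=\d(R)$ is correct and is exactly the paper's implicit reason.
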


\begin{proof}
By the previous considerations, we may assume that  $R=U_2(\alpha_1,\alpha_3,\gamma_1,\gamma_3,\zeta)$ where $\zeta\neq 0$ 
and $\gamma_1\neq \alpha_3$.

First, suppose $\gamma_3\neq 0$: in this case $\k(R)=1$. Consider the algebra $\A=\A_{U_2(0,1,0,1,1)}$ defined in \eqref{LR}:
$$\A=\sp(t_1,t_2)\; \textrm{ such that: }\; t_1^3=t_2^2=t_1t_2; \quad t_2t_1=0.$$
The function $\Psi: \A \rightarrow \A_R$ defined by
$$\Psi(t_1)=\frac{(\alpha_3-\gamma_1)^2}{\gamma_3\zeta }X_1,\qquad
\Psi(t_2)=\frac{-\gamma_1 (\alpha_3-\gamma_1)^3}{\gamma_3^2\zeta }X_2+\frac{(\alpha_3-\gamma_1)^3}{\gamma_3^2\zeta}X_3,$$
is an isomorphism: by Proposition \ref{prop34}, $R$ is conjugate to $U_2(0,1,0,1,1)$. 

Suppose now that $\gamma_3= 0$: in this case $\k(R)=2$ and so $R$ is not conjugate to $U_2(0,1,0,1,1)$. Consider the algebra $\A=\A_{U_2(0,1,0,0,1)}$:
$$\A=\sp(t_1,t_2)\; \textrm{ such that: }\; t_1^3=t_1t_2; \quad t_2^2=t_2t_1=0.$$
The function $\Psi: \A \rightarrow \A_R$ defined by
$$\Psi(t_1)=(\alpha_3-\gamma_1)X_1,\qquad \Psi(t_2)=-\zeta\gamma_1(\alpha_3-\gamma_1) X_2+ \zeta(\alpha_3-\gamma_1)X_3$$
is an isomorphism and so $R$ is conjugate to  $U_2(0,1,0,0,1)$.
\end{proof}

We now consider the case when $\d(\Z(R))=2$ and $\r(\Z(R))=1$. Instead of working in $\C_{\AGL_4(\F)}(\diag(J_2,I_3))$, it is more convenient to write $R$ as $\RR(\tilde R,D)$, for some $\tilde R\in \Delta_3(\F)$ and 
$D\in \Mat_3(\F)$, see \eqref{Rtilde}. The regular subgroups in $\Delta_3(\F)$ have been classified in \cite{PT}.

\begin{theorem}\cite[Section 7.2]{PT}
Let $\F$ be a field.
The distinct conjugacy classes of regular subgroups in $\Delta_3(\F)$ are represented by
$$S_{(4)},\quad S_{(3,1)},\quad  R_\rho\;\; (\rho \in \F^\square),\quad \Tr_3,\quad N_1, \quad 
N_{3,\lambda}\;\;(\lambda \in \F^\ast),$$
$$U_1^3\;\;(\textrm{if } \char \F=2),\quad N_2\;\;(\textrm{if } \char \F\neq 2).$$
\end{theorem}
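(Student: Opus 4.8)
Although this classification is established in \cite[Section~7.2]{PT}, we outline the argument, which is the three-dimensional counterpart of the strategy pervading the present paper; this also makes transparent the reductions invoked in Section~\ref{nonab}.

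By Proposition~\ref{lem1}(a) every $R\in\Delta_3(\F)$ is unipotent, and by Proposition~\ref{lem1}(b) we may assume, after conjugating in $\AGL_3(\F)$, that $\Z(R)$ contains a nontrivial Jordan form $z$. A $4\times 4$ nilpotent upper unitriangular matrix has Jordan type $J_4$, $\diag(J_3,I_1)$, $\diag(J_2,J_2)$ or $\diag(J_2,I_2)$, so the pair $(\d(\Z(R)),\r(\Z(R)))$ is one of $(4,3)$, $(3,2)$, $(2,2)$, $(2,1)$. In every case one conjugates $z$ into a convenient position, works inside $\C_{\AGL_3(\F)}(z)$, and uses the linearity of $\delta_R$ together with the relation $\delta_R(v\delta_R(w))=\delta_R(v)\delta_R(w)$ to reduce $R$ to a normal form depending on finitely many parameters; conjugacy of two such forms is then decided by comparing the invariants $\d(R),\r(R),\k(R)$ and, when these do not suffice, by exhibiting via Proposition~\ref{prop34} an isomorphism (or an obstruction) between the split local algebras $\A_R=\F I_4+R$, exactly as in the proofs of Lemma~\ref{J3J2} and Lemma~\ref{lemU2}.

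The case $z=J_4$ is immediate: a regular unipotent element has abelian centralizer, consisting of its own polynomials, so $R$ is abelian with $\A_R\cong\F[t]/(t^4)$, hence conjugate to $S_{(4)}$. In the remaining cases the normal-form analysis produces, besides abelian subgroups, exactly two types of genuinely noncommutative subgroup not of the shape $\RR_D$: namely $N_1$ and the family $N_{3,\lambda}$ ($\lambda\in\F^\ast$), which arise, in complete parallel with the $\zeta\neq 0$ subcase of Lemma~\ref{lemU2}, when the parameter responsible for noncommutativity is normalised. Every other $R$ is conjugate to a subgroup $\RR_D$ with $D\in\Mat_2(\F)$ (here $\Tr_3=\RR_0$). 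By Lemma~\ref{RD} these $\RR_D$ are classified by the projective congruence classes of $2\times 2$ matrices over $\F$: the symmetric matrices contribute $\Tr_3$ (rank $0$), $S_{(3,1)}$ (rank $1$) and the family $R_\rho$ with $\rho\in\F^\square$ (rank $2$), while a non-symmetric matrix contributes one additional class, equal to $N_2$ when $\char\F\neq 2$ and to $U_1^3$ when $\char\F=2$ — the dichotomy reflecting that an alternating $2\times 2$ matrix becomes symmetric in characteristic $2$. Collecting the representatives obtained in all cases yields the list in the statement.

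The main difficulty is not the enumeration of Jordan types, which is merely bookkeeping, but the separation up to conjugacy of the parametrised families: one must show that $R_{\rho_1}$ and $R_{\rho_2}$ are conjugate only when $\rho_1=\rho_2$, that $N_{3,\lambda_1}$ and $N_{3,\lambda_2}$ are conjugate only when $\lambda_1=\lambda_2$, and that no abelian subgroup is conjugate to a nonabelian one. This is precisely where Lemma~\ref{RD} (projective congruence of matrices) and the algebra-isomorphism criterion of Proposition~\ref{prop34} carry the weight of the proof.
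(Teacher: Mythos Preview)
The paper does not prove this theorem; it is quoted from \cite[Section~7.2]{PT}. Your outline follows the right template (Jordan type of a central element, centralizer reduction, normal forms), but the concrete identification of where the listed subgroups come from is incorrect, and this matters because the sketch as written does not actually account for the full list.

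The key slip is the claim that ``the normal-form analysis produces \ldots\ exactly two types of genuinely noncommutative subgroup not of the shape $\RR_D$: namely $N_1$ and the family $N_{3,\lambda}$'', together with the claim that asymmetric $2\times 2$ matrices contribute \emph{one} projective congruence class. In fact, in dimension $3$ every subgroup on the list except $S_{(4)}$ is (conjugate to) an $\RR_D$ with $D\in\Mat_2(\F)$: one reads off from the explicit matrices used in Lemmas~\ref{lemU6}--\ref{lemU8} that $N_1=\RR_{E_{2,1}}$, $N_2=\RR_{E_{2,1}-E_{1,2}}$, $N_{3,\lambda}=\RR_D$ with $D=\begin{pmatrix}1&1\\0&\lambda\end{pmatrix}$, and $U_1^3=\RR_{E_{1,2}+E_{2,1}}$. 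Thus $N_1,N_2,N_{3,\lambda}$ are \emph{all} of $\RR_D$ shape with asymmetric $D$, and these already exhaust the asymmetric projective congruence classes of $2\times 2$ matrices (rank~$1$; rank~$2$ alternating in $\char\F\neq 2$; rank~$2$ non-alternating, a genuine one-parameter family). Conversely, $U_1^3$ corresponds to the \emph{symmetric} matrix $E_{1,2}+E_{2,1}$, which in $\char\F=2$ is not projectively congruent to any diagonal matrix; so $U_1^3$ is an additional \emph{abelian} class, not the asymmetric one you describe.

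So your sketch gets the right list only because the list is the statement; the mechanism you propose for producing $N_1$, $N_{3,\lambda}$ and $U_1^3$ does not work, and the assertion that asymmetric $2\times 2$ matrices form a single projective congruence class is false over every field. A corrected outline would say: $S_{(4)}$ from $z=J_4$; every other class from $\RR_D$ via Lemma~\ref{RD}, with the symmetric $D$ giving $\Tr_3$, $S_{(3,1)}$, $R_\rho$ and (in $\char\F=2$) $U_1^3$, and the asymmetric $D$ giving $N_1$, $N_{3,\lambda}$ and (in $\char\F\neq 2$) $N_2$.
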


Suppose that $R=\RR(S_{(4)},D)$ for some $D\in \Mat_3(\F)$. By \eqref{dt}, $D=\alpha_1E_{1,1}+\alpha_2 (E_{1,2}
E_{2,1})+\alpha_3(E_{1,3}+E_{2,2}+E_{3,1})$ for some $\alpha_1,\alpha_2,\alpha_3\in \F$. It follows that $R$ is abelian.
Furthermore, $\RR(\Tr_3,D)=\RR_D$. Hence, we have to consider the six subcases corresponding to $\tilde R =S_{(3,1)}, R_\rho, U_1^3, N_1, N_2, N_{3,\lambda}$.

\begin{lemma}\label{lemU3}
There are exactly $|\F|+1$ distinct conjugacy classes of nonabelian subgroups
$R=\RR(S_{(3,1)},D)\in \D$  such that  $\d(\Z(R))=2$ and $\r(\Z(R))=1$. 
Such classes can be represented by the subgroups corresponding to 
$$
\begin{array}{cccc}
D & \d(R) & \r(R)& k(R)\\ \hline 
E_{3,1} &  3 & 3 & 3\\
E_{3,1}+E_{3,3} &  3 &  3  & 2\\
E_{1,3}+\lambda E_{3,1}\;\;(\lambda \in \F\setminus \{0,1\}) & 3 & 3 & 2\\
E_{1,3} & 3 & 2 & 2
\end{array}$$
These subgroups are not conjugate to $\RR_A$ for any $A\in \Mat_3(\F)$.
\end{lemma}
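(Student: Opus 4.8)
The plan is to proceed as in Lemmas \ref{J3J2} and \ref{lemU2}: first determine all matrices $D$ compatible with $\tilde R=S_{(3,1)}$, then cut the list down using the hypotheses $\d(\Z(R))=2$, $\r(\Z(R))=1$, and finally classify the resulting nilpotent algebras by means of Proposition \ref{prop34}.

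Since $\delta_{S_{(3,1)}}(x_1,x_2,x_3)=x_1E_{1,2}$, substituting $\delta_{S_{(3,1)}}(e_1)=E_{1,2}$ and $\delta_{S_{(3,1)}}(e_2)=\delta_{S_{(3,1)}}(e_3)=0$ into \eqref{dt} forces
$$D=\alpha E_{1,1}+\beta(E_{1,2}+E_{2,1})+\gamma E_{1,3}+\delta E_{3,1}+\varepsilon E_{3,3}$$
for scalars $\alpha,\beta,\gamma,\delta,\varepsilon\in\F$. Using the criterion $\mu_R(v)\mu_R(w)=\mu_R(w)\mu_R(v)\iff v\delta_R(w)=w\delta_R(v)$ one checks that $R$ is nonabelian precisely when $\gamma\neq\delta$; moreover the central subgroup is $\{\mu_R(0,x_2,0,x_4):x_2,x_4\in\F\}$, whose $R-I_5$ part is spanned by $E_{1,3}+\beta E_{2,5}$ and $E_{1,5}$, so that $\d(\Z(R))=2$ always while $\r(\Z(R))=1$ exactly when $\beta=0$. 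Hence, under the hypotheses of the lemma, $D=\alpha E_{1,1}+\gamma E_{1,3}+\delta E_{3,1}+\varepsilon E_{3,3}$ with $\gamma\neq\delta$.

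Next I would read off the algebra $N=\A_R-I_5$: in the standard basis $X_1,X_2,X_3,X_4$ its nonzero products are $X_1^2=X_2+\alpha X_4$, $X_1X_3=\gamma X_4$, $X_3X_1=\delta X_4$ and $X_3^2=\varepsilon X_4$, so $N^2=\langle X_2,X_4\rangle$, $N^3=0$ and $[N,N]=\langle X_4\rangle$. In particular $\dim N^2=2$, whereas $\dim\bigl(\J(\A_{\RR_A})\bigr)^2\le1$ for every $A\in\Mat_3(\F)$; by Proposition \ref{prop34} this already proves that none of these subgroups is conjugate to an $\RR_A$. Since $N^3=0$, the product on $N$ factors through $N/N^2$, and an algebra isomorphism amounts to a pair of linear isomorphisms of $N/N^2$ and of $N^2$ intertwining the induced bilinear map $N/N^2\times N/N^2\to N^2$; by Proposition \ref{prop34}, classifying our subgroups up to conjugacy is therefore the same as classifying these bilinear maps up to $\GL_2\times\GL_2$. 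After absorbing $\alpha$ into the basis of $N^2$ (replacing $X_2$ by $X_2+\alpha X_4$), the datum is the pencil $\mathcal P=\bigl\langle E_{1,1},\ \bigl(\begin{smallmatrix}0&\gamma\\\delta&\varepsilon\end{smallmatrix}\bigr)\bigr\rangle\subseteq\Mat_2(\F)$ taken up to congruence, where the line $\langle E_{1,1}\rangle$ (coming from $X_1^2$) is the unique line of symmetric matrices contained in $\mathcal P$ precisely because $\gamma\neq\delta$.

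Finally I would carry out this congruence classification. Since the symmetric line of an admissible pencil is canonically $\langle E_{1,1}\rangle$, any congruence between two such pencils must fix it, hence is given by an upper triangular $P=\bigl(\begin{smallmatrix}t&p\\0&u\end{smallmatrix}\bigr)$; a direct computation shows that, up to an overall nonzero scalar, $(\gamma,\delta,\varepsilon)\mapsto\bigl(u(t\gamma+p\varepsilon),\,u(t\delta+p\varepsilon),\,u^2\varepsilon\bigr)$. If $\varepsilon\neq0$ one kills the first coordinate and normalises, reaching $(0,1,1)$, i.e.\ $D=E_{3,1}+E_{3,3}$ ($=U_3(0,1,1)$); if $\varepsilon=0$ the remaining invariant is the ratio $[\gamma:\delta]\in\mathbb{P}^1(\F)$ subject to $\gamma\neq\delta$, yielding $D=E_{3,1}$ (for $[0:1]$, $=U_3(0,1,0)$), $D=E_{1,3}$ (for $[1:0]$, $=U_3(1,0,0)$) and $D=E_{1,3}+\lambda E_{3,1}$ (for $[1:\lambda]$, $\lambda\neq0,1$, $=U_3(1,\lambda,0)$) — altogether $|\F|+1$ classes. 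Computing $\d(R),\r(R),\k(R)$ for each representative then fills in the table, and the extra invariant ``there exists $a\in N$ with $0\neq a^2\in[N,N]$'' (equivalent to $\varepsilon\neq0$) separates $E_{3,1}+E_{3,3}$ from the family $E_{1,3}+\lambda E_{3,1}$. I expect the last step to be the main obstacle: one must check carefully that the symmetric rank-one part really pins down $\langle E_{1,1}\rangle$ (this is what prevents identifying $E_{1,3}$ with $E_{3,1}$ by ``transposition''), and that, once restricted to the stabiliser of $\langle E_{1,1}\rangle$, the case $\varepsilon\neq0$ collapses to a single orbit whereas the case $\varepsilon=0$ genuinely spreads over $\mathbb{P}^1(\F)$.
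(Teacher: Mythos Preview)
Your proof is correct and takes a genuinely different route from the paper. Where the paper first conjugates away the $\alpha$--coefficient via an explicit $g=I_5-\bar\alpha_1 E_{3,5}$ and then establishes each conjugacy by writing down concrete algebra isomorphisms $\Psi:\A_{U_3(\cdot)}\to\A_R$ on generators (case by case according to whether $\gamma_3\neq0$, $\gamma_3=\alpha_3=0$, or $\gamma_3=0\neq\alpha_3$), you instead reduce the whole question to the classification of two--dimensional subspaces of $\Mat_2(\F)$ under simultaneous congruence, exploiting that $N^3=0$ forces an isomorphism of $N$ to be determined by its effect on $N/N^2$ and $N^2$. Your key observation---that $\gamma\neq\delta$ pins down $\langle E_{1,1}\rangle$ as the unique symmetric line of the pencil, so that any congruence between two such pencils must be upper triangular---is exactly what makes the orbit count transparent: the parameter $\varepsilon$ either collapses to a single class or leaves $[\gamma:\delta]\in\mathbb P^1(\F)\setminus\{[1:1]\}$ as a complete invariant, yielding $1+|\F|$ classes directly. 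Your argument for non--conjugacy with any $\RR_A$ via $\dim N^2=2>1\ge\dim\J(\A_{\RR_A})^2$ is also cleaner than the paper's, which must treat $U_3(1,0,0)$ by direct computation because there $\r(R)=2$ fails to separate it from the $\RR_A$'s. The step you flag as uncertain---the orbit structure under upper triangular $P$---is in fact exactly as you describe: when $\varepsilon\neq0$ one chooses $p=-t\gamma/\varepsilon$ to kill the first entry and then $t/u$ to normalise the remaining ratio, whence a single orbit; when $\varepsilon=0$ the action is just an overall scaling of $(\gamma,\delta)$.
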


\begin{proof}
Let $R=\RR(S_{(3,1)},D)$ for some $D\in \Mat_3(\F)$ such that $\d(\Z(R))=2$ and $\r(\Z(R))=1$. 
By \eqref{dt} and the hypothesis that $R$ is nonabelian,  we obtain $D=\begin{pmatrix} \bar\alpha_1 & \bar\alpha_2 & \bar\alpha_3\\
\bar\alpha_2 & 0 & 0 \\\bar\gamma_1 & 0 & \bar\gamma_3 \end{pmatrix}$ with $\bar\gamma_1\neq \bar\alpha_3$.
Moreover, the condition $\r(\Z(R))=1$ implies $\bar\alpha_2=0$.
Then,  $R$ is conjugate via $g=I_5-\bar\alpha_1 E_{3,5}$ to 
$$U_3(\alpha_3,\gamma_1,\gamma_3)=\begin{pmatrix}
1 & x_1 & x_2 & x_3 & x_4\\
0 & 1 & x_1 & 0 & \alpha_3 x_3\\
0 & 0 & 1 & 0 & 0 \\
0 & 0 & 0 & 1 & \gamma_1 x_1 +\gamma_3 x_3\\
0 & 0 & 0 & 0 & 1
\end{pmatrix},
$$
for some $\alpha_3,\gamma_1,\gamma_3\in \F$ such that $\gamma_1\neq \alpha_3$. 

Suppose that $\gamma_3\neq 0$. Then $\d(R)=\r(R)=3$ and $\k(R)=2$.
Consider the algebra $\A=\A_{U_3(0,1,1)}$:
\begin{equation*}\label{U3(0,1,1)}
\A=\sp(t_1,t_2)\; \textrm{ such that: }\; t_2 t_1 = t_2^2; \quad t_1^3=t_1t_2=0.
\end{equation*}
The function $\Psi: \A \rightarrow \A_R$ defined by
$$\Psi(t_1)=\gamma_3 X_1- \alpha_3 X_3,\qquad \Psi(t_2)=(\gamma_1-\alpha_3)X_3$$
is an isomorphism and so $R$ is conjugate to  $U_3(0,1,1)$.
 
Suppose now that $\gamma_3=\alpha_3=0$ (and so $\gamma_1\ne 0$). Then 
$\d(R)=\r(R)=\k(R)=3$.  In this case, consider the  algebra $\A=\A_{U_3(0,1,0)}$:
\begin{equation*}\label{U3(0,1,0)}
\A=\sp(t_1,t_2)\; \textrm{ such that: }\; t_1^3=t_2^2=t_1t_2=t_2t_1^2=0.
\end{equation*}
The function $\Psi: \A \rightarrow \A_R$ defined by
$$\Psi(t_1)=X_1,\qquad \Psi(t_2)=X_3$$
is an isomorphism: it follows that $R$ is conjugate to  $U_3(0,1,0)$.

Finally, suppose that $\gamma_3=0$ and $\alpha_3\neq 0$. For any $\lambda \in \F\setminus\{1\}$, consider the algebra $\A_\lambda=\A_{U_3(1,\lambda,0)}$:
\begin{equation*}\label{U3(1,lambda,0)}
\A_\lambda=\sp(t_1,t_2)\; \textrm{ such that: }\; t_1^3=t_2^2=t_1^2t_2=0; \quad t_2t_1=\lambda t_1t_2.
\end{equation*}
The function $\Psi: \A_\lambda \rightarrow \A_R$ defined by
$$\Psi(t_1)=X_1,\qquad \Psi(t_2)=X_3,\quad \lambda=\frac{\gamma_1}{\alpha_3}$$
is an isomorphism and hence $R$ is conjugate to  $U_3(1,\lambda,0)$ for some $\lambda\neq 1$.
Now, notice that if $\lambda=0$ then $\d(U_3(1,0,0))=3$ and $\r(U_3(1,0,0))=\k(U_3(1,0,0))=2$. Otherwise, $\d(U_3(1,\lambda,0))=\r(U_3(1,\lambda,0))=3$ and $\k(U_3(1,\lambda,0))=2$. 
Furthermore, $U_3(1,0,0)$ is not conjugate to any $\RR_A$ (direct computations) and the same holds for the other subgroups, since $\r(\RR_A)\leq 2$. The subgroups $U_3(1,\lambda_1,0)$ and $U_3(1,\lambda_2,0)$ are conjugate if and only if $\lambda_1=\lambda_2$. To conclude, we observe that $U_3(1,\lambda,0)$ is not conjugate to $U_3(0,1,1)$.
\end{proof}

Before to consider the next subcase, we recall the following notation given in the Introduction.
For any $\rho \in 
\F^\square$,  we denote by $\CC_\rho(\F)$ a fixed set of representatives for the equivalence classes with respect to 
the following relation  $\rel_\rho$ defined on $\CCC(\F)=\left(\F\setminus\{1\}\right)\times \F$:
given $(\beta_1,\beta_2),(\beta_3,\beta_4)\in \CCC(\F)$, we write $(\beta_1,\beta_2)\rel_\rho (\beta_3,\beta_4)$ if and 
only if 
either $(\beta_3,\beta_4)=(\beta_1,\pm \beta_2)$ or
\begin{equation}\label{b3b4}
\beta_3=\frac{\beta_1 t^2+ \beta_2 t-\rho}{t^2 + \beta_2 t -\rho\beta_1}\quad\textrm{ and }\quad
 \beta_4= \pm \frac{\beta_2 t^2-2\rho(\beta_1+1)t-\rho\beta_2}
  {t^2+\beta_2 t -\rho\beta_1}
\end{equation}
for some $t \in \F$  such that $(t^2+\rho )(t^2 + \beta_2 t -\rho\beta_1)\neq 0$.

\begin{lemma}\label{U4}
The distinct conjugacy classes of nonabelian subgroups
$R=\RR(R_\rho,D)$ $\in \D$  such that  $\d(\Z(R))=2$ and $\r(\Z(R))=1$
can be represented by 
$$\RR(R_\rho,E_{1,2}+\beta_1 
E_{2,1}+\beta_2 E_{2,2})$$
with $\rho \in \F^\square$ and $(\beta_1,\beta_2)\in \CC_\rho(\F)$.
Furthermore,  $\d(R)=\r(R)=3$ and $\k(R)=2$.
\end{lemma}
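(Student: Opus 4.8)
The approach parallels the preceding lemmas: starting from the general shape of $R = \RR(R_\rho, D)$ dictated by \eqref{dt}, reduce $D$ by conjugation to a canonical form, realize the resulting subgroup as $U_4(\rho,\beta_1,\beta_2)$, and then analyze when two such subgroups are conjugate via the algebra isomorphism criterion of Proposition \ref{prop34}. First I would impose \eqref{dt} with $R = R_\rho$ to pin down the admissible matrices $D$: the equations force most entries to vanish or to be tied together, and the nonabelian hypothesis together with $\r(\Z(R))=1$ should eliminate the remaining degrees of freedom until $D$ has, up to an overall scalar (absorbed into $\rho$ via Lemma \ref{RD}-type scaling) and up to the conjugations available in $\C_{\AGL_4(\F)}$, the form $E_{1,2}+\beta_1 E_{2,1}+\beta_2 E_{2,2}$ with $\beta_1 \neq 1$ (the constraint $\beta_1\neq 1$ being exactly what keeps $R$ nonabelian, analogous to the $\gamma_1\neq\alpha_3$ conditions in Lemmas \ref{lemU2} and \ref{lemU3}). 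Writing out $R$ in the coordinates of \eqref{eqRD}/\eqref{Rtilde} then exhibits it as $U_4(\rho,\beta_1,\beta_2)$, and a direct computation of the minimal polynomial of a generic element and of $\rk(X-I_5)$ gives $\d(R)=\r(R)=3$ and $\k(R) = n - \rk(D) = 4 - 2 = 2$.

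The substantive part is the conjugacy analysis, i.e.\ showing that $U_4(\rho,\beta_1,\beta_2)$ and $U_4(\rho',\beta_1',\beta_2')$ are conjugate precisely when $\rho=\rho'$ (in $\F^\square$) and $(\beta_1,\beta_2)\rel_\rho(\beta_1',\beta_2')$. One direction is a construction: I would write down the associative algebra $\A_{U_4(\rho,\beta_1,\beta_2)}$ by generators and relations (two generators $t_1,t_2$ corresponding to $X_1,X_2$, with the multiplication table read off from the matrix), and exhibit an explicit algebra homomorphism $\Psi$ sending the generators of $\A_{U_4(\rho,\beta_1,\beta_2)}$ to suitable $\F$-linear combinations of $X_1,X_2$ in $\A_{U_4(\rho,\beta_1',\beta_2')}$ — with the substitution $t_1 \mapsto t_1 + s\, t_2$ (or a projective version thereof) parametrized by the element $t\in\F$ appearing in \eqref{b3b4}. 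Checking that this $\Psi$ respects the relations is exactly where the fractional-linear formulas for $\beta_3,\beta_4$ come from: they are forced by demanding that the images satisfy the target relations, and the condition $(t^2+\rho)(t^2+\beta_2 t-\rho\beta_1)\neq 0$ is precisely the nondegeneracy needed for $\Psi$ to be invertible (the second factor) and for the transformation to be admissible (the first factor, coming from the $\rho$-dependence). The sign ambiguity $\beta_4 = \pm(\cdots)$ and the alternative $(\beta_3,\beta_4)=(\beta_1,\pm\beta_2)$ correspond to the two obvious symmetries $X_2\mapsto -X_2$ and the identity.

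For the converse — that conjugate subgroups must have related parameters — I would argue that $\rho$ is an invariant because it is recoverable from the isomorphism type of a natural quotient or substructure of $\A_R$ (the same mechanism by which $\rho$ was an invariant of $R_\rho$ in \cite{PT}, carried along since $\tilde R = R_\rho$ sits inside $R$ canonically), and then that any algebra isomorphism $\A_{U_4(\rho,\beta_1,\beta_2)}\to\A_{U_4(\rho,\beta_1',\beta_2')}$ must, after normalizing, send $t_1$ to $c(t_1 + s t_2)$ for some $c\in\F^\ast$, $s\in\F$, since the one-dimensional spaces distinguished by the algebra structure (e.g.\ $\{v : v \text{ generates a subalgebra of a given type}\}$) are rigid enough to leave only this freedom; substituting back and matching relations yields exactly the formulas \eqref{b3b4}. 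The main obstacle is precisely this last bookkeeping: one must verify that no isomorphism can mix $t_1$ and $t_2$ in a more general way, which requires identifying an intrinsic filtration or annihilator condition singling out the line $\sp(t_2)$ (or the coset structure modulo it) inside $\J(\A_R)$ — routine in spirit but delicate to state cleanly, and it is where the $\beta_1\neq 1$ hypothesis must be used to rule out the degenerate (abelian, or higher-$\r$) behavior. I would organize the write-up so that this rigidity statement is isolated as the one nontrivial claim, with the fractional-linear algebra then being a mechanical consequence.
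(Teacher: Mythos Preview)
Your plan is in the right spirit and, via Proposition \ref{prop34}, is formally equivalent to what the paper does, but the execution the paper chooses is quite different and more direct. After reducing $D$ (by explicit conjugating matrices, treated separately according as $\bar\alpha_2\neq 0$ or $\bar\alpha_2=0$) to the shape $E_{1,2}+\beta_1 E_{2,1}+\beta_2 E_{2,2}$ with $\beta_1\neq 1$, the paper does \emph{not} pass to the algebra presentation and search for an intrinsic rigidity. Instead it argues that any conjugating element between two subgroups $U_4(\rho_i,\cdot,\cdot)$ may be taken of the form $g=\diag(1,A,B)$ with $A,B\in\GL_2(\F)$, writes out the condition $U_4(\rho_1,\beta_1,\beta_2)^g=U_4(\rho_2,\beta_3,\beta_4)$ entrywise, and solves the resulting polynomial system. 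The entries of $B$ turn out to be determined by those of $A$; one equation forces $a_1a_3+\rho_2 a_2a_4=0$, and the remaining three (together with the nondegeneracy conditions) yield $\rho_1=\rho_2$ and precisely the fractional-linear formulas defining $\rel_\rho$, with the sign ambiguity coming from $a_3=\pm\rho_1 a_2$.

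The rigidity step you propose --- singling out a canonical line $\sp(t_2)$ so that any isomorphism must send $t_1\mapsto c(t_1+st_2)$ --- does not work as stated, and this is a real gap. There is no intrinsic line: an isomorphism sends each of $t_1,t_2$ to an arbitrary linear combination of $t_1',t_2'$ modulo $(N')^2$, and the constraints on the $2\times 2$ change of basis emerge only \emph{after} imposing the relations $t_2t_1=\beta_1 t_1t_2$ and $t_2^2=\rho t_1^2+\beta_2 t_1t_2$, not before. What \emph{is} canonical is $N^2=\langle X_3,X_4\rangle=\Ker\delta_R$, which explains why the conjugating element is block triangular and why the off-diagonal block may be absorbed; but that still leaves a full $\GL_2\times\GL_2$ worth of freedom, and you would end up writing down and solving exactly the system the paper solves. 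So your ``rigidity shortcut'' for the converse direction does not materialize, and neither does the argument that $\rho$ is recoverable from a canonical copy of $R_\rho$ inside $R$: in the paper $\rho_1=\rho_2$ falls out of the equations, not from a substructure argument. One further small point: the formula $\k(R)=n-\rk(D)$ you quote is established in the paper only for $\RR_D=\RR(\Tr_{n-1},D)$, not for $\RR(R_\rho,D)$; here $\k(R)=2$ should be read off directly from the explicit matrix form of $U_4(\rho,\beta_1,\beta_2)$.
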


\begin{proof}
Let $R=\RR(R_\rho,D)\in \D$, with $\rho \in \F^\square$ and $D\in \Mat_3(\F)$,  be a nonabelian subgroup  such that 
$\d(\Z(R))=2$ and $\r(\Z(R))=1$.
By \eqref{dt} and the hypothesis that $R$ is nonabelian, we have $D=\begin{pmatrix}\bar \alpha_1 & \bar\alpha_2 & 0\\
\bar\beta_1 & \bar\beta_2 & 0 \\ 0 & 0 & 0 \end{pmatrix}$, with
$\bar\beta_1\neq \bar\alpha_2$.
Taking $g=\diag\left(I_3,\begin{pmatrix} 1 & -\bar\alpha_1 \bar\alpha_2^{-1}\\ 0  & 
\bar\alpha_2^{-1}\end{pmatrix}\right)$ if $\bar \alpha_2\neq 0$ and 
$g=\diag\left(1,\begin{pmatrix}
0 & 1 \\ \rho & 0\end{pmatrix}, \begin{pmatrix} \rho & -\bar\beta_2\bar\beta_1^{-1} \\
0 & \rho\bar\beta_1^{-1}\end{pmatrix}\right)$ if $\bar\alpha_2=0$,
we obtain that $R$ is conjugate via $g$ to the 
subgroup
$$U_4(\rho,\beta_1,\beta_2)=\begin{pmatrix}
1 & x_1 & x_2 & x_3 & x_4\\
0 & 1 & 0 & x_1 &  x_2\\
0 & 0 & 1 & \rho x_2 & \beta_1 x_1+\beta_2 x_2\\
0 & 0 & 0 & 1 & 0\\
0 & 0 & 0 & 0 & 1
\end{pmatrix},\quad \textrm{ where } \beta_1\neq 1.$$ 

Let $\rho_1,\rho_2\in \F^\square$, $\beta_1,\beta_2,\beta_3,\beta_4\in \F$ with $\beta_1,\beta_3\neq 1$.
It is quite easy to verify that the subgroups $U_4(\rho_1,\beta_1,\beta_2)$ and $U_4(\rho_2,\beta_3,\beta_4)$ are conjugate in 
$\AGL_4(\F)$ if and only if there exists an element $g=\diag(1,A,B)\in \GL_5(\F)$, with $A=\begin{pmatrix} a_1 & a_2 
\\ 
a_3 & 
a_4\end{pmatrix}$, 
$B=\begin{pmatrix} b_1 &b_2 \\b_3 & b_4\end{pmatrix}$, such that 
$U_4(\rho_1,\beta_1,\beta_2)^g=U_4(\rho_2,\beta_3,\beta_4)$. This holds if and only if
$b_1  =  \rho_2 a_2^2+a_1^2$,
$b_2  =  (\beta_3+1) a_1 a_2+\beta_4 a_2^2$,
$b_3  =  \rho_2 a_2 a_4+a_1a_3$, $b_4  =  \beta_3 a_2 a_3+\beta_4 a_2 a_4+a_1 a_4$
and
\begin{equation}\label{eq1}
a_1 a_3 + a_2 a_4 \rho_2   =  0,
\end{equation}
\begin{equation}\label{eq2}
\left\{
\begin{array}{c}
(\beta_1\beta_3-1) a_2 a_3+\beta_4(\beta_1-1) a_2a_4+(\beta_1-\beta_3)a_1a_4 =  0,\\
\rho_1\rho_2a_2^2+ \rho_1a_1^2- \rho_2a_4^2-a_3^2  =  0,\\
\beta_2(\beta_3 a_2a_3+\beta_4a_2a_4 +a_1a_4)
+ \beta_4(a_2^2\rho_1-a_4^2)+(\beta_3+1)(a_1a_2\rho_1-a_3a_4)=0
\end{array}\right.
\end{equation}
provided that
\begin{equation}\label{det}
(a_1a_4 - a_2a_3)(a_1^2 + a_1a_2\beta_4 - a_2^2\rho_2\beta_3)\neq 0.
\end{equation}
Now, if $a_2=0$, from \eqref{eq1} and \eqref{det} we get  $a_3=0$ and in this case  \eqref{eq2} gives
$\beta_3=\beta_1$, $\rho_2=\rho_1$ and $\beta_4=\pm \beta_2$
(notice that, taking $g=\diag(1,1,-1,1,-1)$, we have $U_4(\rho_1,\beta_1,\beta_2)^g=U_4(\rho_1,\beta_1,-\beta_2)$).
Hence, we may suppose $a_2\neq 0$ and $a_4=-a_1a_3(a_2\rho_2)^{-1}$ by \eqref{eq1}.
In this case \eqref{eq2} gives $\rho_2=\rho_1$, 
$a_3=\pm a_2\rho_1$, $f_0=0$ and $f_+=0$ or $f_{-}=0$, where
\begin{eqnarray*}
f_0 & = &  a_1^2(\beta_1 - \beta_3)  + a_1a_2 \beta_4 (\beta_1 -1)- a_2^2\rho_1 (
\beta_1\beta_3-1),\\
f_{\pm} & = & a_1^2(\beta_2\pm \beta_4)+a_1a_2(\beta_2\beta_4\mp 2\rho_1(\beta_3+1))-a_2^2\rho_1(\beta_2\beta_3\pm 
\beta_4)
\end{eqnarray*}
provided that 
\begin{equation}
(a_1^2 +a_2^2\rho_1)(a_1^2 + a_1a_2\beta_4 - a_2^2\rho_1\beta_3)\neq 0.
\end{equation}
Taking $a_2=\pm 1$, it follows that  $U_4(\rho_1,\beta_1,\beta_2)$ is conjugate to $U_4(\rho_2,\beta_3,\beta_4)$ if  
and only if $\rho_2=\rho_1$ and  either $(\beta_3,\beta_4)=(\beta_1,\pm \beta_2)$ or
$$(\beta_3,\beta_4)=\left(\frac{\beta_1 t^2+ \beta_2 t-\rho_1}{t^2 + \beta_2 t -\rho_1\beta_1},\;
  \pm \frac{\beta_2 t^2-2\rho_1(\beta_1+1)t-\rho_1\beta_2}
  {t^2+\beta_2 t -\rho_1\beta_1}\right)$$
for some $t \in \F$  such that $(t^2+\rho)(t^2 + \beta_2 t -\rho_1\beta_1)\neq 0$. In other words, this holds if and 
only if
$\rho_2=\rho_1$ and $(\beta_1,\beta_2)\rel_{\rho_1}(\beta_3,\beta_4)$.
\end{proof}

\begin{rem}\label{rem}
Let $\rho \in \F^\square$, $\beta_1,\beta_3\in \F\setminus\{1\}$ and $\beta_2,\beta_4\in \F$.
Then:
\begin{itemize}
\item[(a)] $U_4(\rho,\beta_1,\beta_2)$ and $U_4(\rho,-1,0)$ are conjugate if and only if $(\beta_1,\beta_2)=(-1,0)$;
\item[(b)] $U_4(\rho,\beta_1,\beta_2)$ and $U_4(\rho,\beta_1,\beta_4)$ are conjugate if and only if $\beta_4=\pm \beta_2$;
\item[(c)] $U_4(\rho,\beta_1,0)$ and $U_4(\rho,\beta_3,0)$ are conjugate if and only if either $\beta_3=\beta_1$ or $\beta_1\neq 0$ and $\beta_3=\beta_1^{-1}$;
\item[(d)]  $U_4(\rho,\beta_1,0)$ and $U_4(\rho,0,\beta_4)$ are conjugate if and only if $\beta_1=\rho\omega^2$ and $\beta_4=\frac{2\rho\omega}{\rho\omega^2-1}$ for some $\omega\in \F$ such that $\rho \omega^2\neq \pm 1$.
\end{itemize}
\end{rem}

\begin{rem} It will be useful to give the presentation of the following algebras:
\begin{itemize}
\item[(a)] $\A_{U_4(\rho,0,\lambda)}=\sp(t_1,t_2)\; \textrm{ such that: }\;
t_1^3=t_2t_1=0; \quad  t_2^2=\rho t_1^2+\lambda t_1t_2$;
\item[(b)] $\A_{U_4(\rho,-1,0)}=\sp(t_1,t_2)\; \textrm{ such that: }\;
t_1^3=t_1t_2+t_2t_1=0; \quad  t_2^2=\rho t_1^2$.
\end{itemize}
\end{rem}

\begin{lemma}\label{lemU5}
Assume $\char \F=2$. There are exactly $|\H(\F)|$ distinct conjugacy classes of nonabelian subgroups
$R=\RR(U_1^3,D)\in \D$  such that  $\d(\Z(R))=2$ and $\r(\Z(R))=1$. 
Such classes can be represented by the subgroups corresponding to 
$$D=E_{1,1}+E_{1,2}+\lambda E_{2,2}\;\; (\lambda \in \H(\F)).$$
Furthermore, $\d(R)=\r(R)=3$ and $\k(R)=2$.
\end{lemma}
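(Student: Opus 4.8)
The strategy mirrors the proofs of Lemmas \ref{lemU3} and \ref{U4}. First I would start from $R=\RR(U_1^3,D)$ for an arbitrary $D\in\Mat_3(\F)$ and impose the compatibility condition \eqref{dt} relative to $\tilde R=U_1^3$; together with the hypothesis that $R$ is nonabelian and the two spectral constraints $\d(\Z(R))=2$, $\r(\Z(R))=1$, this should pin down $D$ to a small parametric family. As in the previous cases, the condition $\r(\Z(R))=1$ kills the off-diagonal entry linking the two $J_2$-type blocks, and nonabelianness forces a strict inequality between two of the surviving parameters; writing out $U_1^3$ explicitly (the characteristic-two regular subgroup of $\Delta_3(\F)$ from \cite{PT}) I expect to be left with $D$ depending on essentially three scalars, say $\bar\alpha_1,\bar\alpha_2,\bar\lambda$ with $\bar\alpha_2\ne 0$ after the nonabelian hypothesis, so that $R$ is conjugate — via a block-diagonal $g=\diag(1,A,B)\in\GL_5(\F)$ that rescales and normalizes — to a subgroup of the form $\RR(U_1^3,E_{1,1}+E_{1,2}+\lambda E_{2,2})$ for some $\lambda\in\F$.

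Next I would compute the invariants $\d(R)$, $\r(R)$, $\k(R)$ for this normalized family: a direct calculation on the $5\times 5$ matrices $X_1,X_2,X_3$ and their products should give $\d(R)=\r(R)=3$ and $\k(R)=2$ uniformly in $\lambda$, which in particular (since $\r(\RR_A)\le 2$) shows none of these subgroups is conjugate to any $\RR_A$. For the counting, I would identify the associative algebra $\A_{\RR(U_1^3,E_{1,1}+E_{1,2}+\lambda E_{2,2})}$ by reading off the multiplication table on the generators $X_1,X_2$ — this will be a two-generator nilpotent algebra in characteristic two whose relations carry the parameter $\lambda$ through a term like $t_1t_2+t_2t_1=\lambda t_1^2$ (or similar), echoing the role of $\epsilon$ in $U_1(1,\epsilon)$ and $U_5(1,1,\epsilon)$. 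Then, exactly as in Lemma \ref{J3J2}, a general change of generators $\Psi(t_1)=aX_1+bX_2$, $\Psi(t_2)=cX_1+dX_2$ of $\A$ transforms $\lambda$ by a substitution of the shape $\lambda\mapsto \lambda + (\mu^2+\mu)$ for $\mu$ ranging over $\F$ (the ambiguity $x^2+x$ being exactly the Artin--Schreier obstruction in characteristic two), so $\RR(U_1^3,E_{1,1}+E_{1,2}+\lambda_1 E_{2,2})$ and $\RR(U_1^3,E_{1,1}+E_{1,2}+\lambda_2 E_{2,2})$ are conjugate iff $\lambda_1+\lambda_2\in\G(\F)$. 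By Proposition \ref{prop34} the conjugacy classes are therefore in bijection with $\F/\G(\F)$, which has a transversal $\H(\F)$; this yields exactly $|\H(\F)|$ classes represented by $\lambda\in\H(\F)$.

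The main obstacle I anticipate is the first step — the bookkeeping needed to solve \eqref{dt} for $\tilde R=U_1^3$ and to carry out the normalization, because $U_1^3$ is the one genuinely nonabelian three-dimensional building block and its $\delta$-function has more nonzero structure than $S_{(4)}$, $R_\rho$ or $\Tr_3$, so the constraint \eqref{dt} is more rigid and the conjugating matrix $g$ must be chosen to respect the automorphisms of $U_1^3$ as well. Once $D$ is normalized to the stated one-parameter form, the remainder is the same Artin--Schreier computation already used several times in the paper, and should be routine. A secondary (minor) point is to double-check that the resulting algebra is genuinely associative and that the relations I extract are complete — i.e. that $\dim\A=4$ as it must be — which is automatic from the general theory (Proposition \ref{prop34} and the discussion preceding it) but worth stating.
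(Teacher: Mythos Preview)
Your plan is correct and follows essentially the same path as the paper's proof: constrain $D$ via \eqref{dt} and the nonabelian hypothesis to a $2\times 2$ upper-left block, normalize by a simple conjugation (the paper uses $g=I_5+\bar\beta_1 E_{4,5}$) to the family $U_5(\alpha_1,\alpha_2,\beta_2)$ with $\alpha_2\neq 0$, then pass via an explicit algebra isomorphism to $U_5(1,1,\lambda)$ and invoke the Artin--Schreier argument to get conjugacy iff $\lambda_1+\lambda_2\in\G(\F)$. Two minor corrections of expectation: in the paper the parameter $\lambda$ lives in the relation $t_2^2=\lambda t_1^2$ (with $t_1t_2+t_2t_1+t_1^2=0$ fixed) rather than in the commutator term you guessed, and $U_1^3$ is actually \emph{abelian}, so the bookkeeping for \eqref{dt} is no more delicate than in Lemmas \ref{lemU3} and \ref{U4}.
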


\begin{proof}
Let $R=\RR(U_1^3,D)$ for some $D\in \Mat_3(\F)$.
By \eqref{dt} and the hypothesis that $R$ is nonabelian, we have $D=\begin{pmatrix}\bar \alpha_1 & \bar\alpha_2 & 0\\
\bar\beta_1 & \bar\beta_2 & 0 \\ 0 & 0 & 0 \end{pmatrix}$ with $\bar\beta_1\neq\bar \alpha_2$.
Then, $R$ is conjugate via $g=I_5+\bar\beta_1 E_{4,5}$ to 
$$U_5(\alpha_1,\alpha_2,\beta_2)=\begin{pmatrix}
1 & x_1 & x_2 & x_3 & x_4\\
0 & 1 & 0 & x_2 & \alpha_1 x_1+\alpha_2 x_2\\
0 & 0 & 1 & x_1 & \beta_2 x_2\\
0 & 0 & 0 & 1 & 0\\
0 & 0 & 0 & 0 & 1
\end{pmatrix},
$$
where $\alpha_2\neq 0$. Then $\d(R)=\r(R)=3$ and $\k(U)=2$.

For any $\lambda \in\F$, consider the algebra $\A_\lambda=\A_{U_5(1,1,\lambda)}$:
$$\A_\lambda=\sp(t_1,t_2)\; \textrm{ such that: }\;
t_1^3=t_2^3=t_1t_2+t_2t_1+t_1^2=0; \quad  t_2^2=\lambda t_1^2
$$
(clearly, when $\lambda=0$, the condition $t_2^3=0$ can be omitted). We now define an isomorphism of split local algebras $\Psi: \A_\lambda \rightarrow \A_R$
in the following way:
$$\left\{
\begin{array}{lll}
\Psi(t_1)=\alpha_2 X_1+(\alpha_1+\alpha_2) X_2, & \Psi(t_2)=\alpha_2 X_2,\; & \textrm{ if } \beta_2=0,\\
\Psi(t_1)=\alpha_2 X_2, & \Psi(t_2)=\beta_2 X_1,\; & \textrm{ if } \beta_2\neq 0,
\end{array}\right.$$
where $\lambda=\frac{\alpha_1 \beta_2 }{\alpha_2^2}$.
Hence, $R$ is conjugate to  $U_5(1,1,\lambda)$ for some $\lambda \in \F$.
Now, $U_5(1,1,\lambda_1)$ and $U_5(1,1,\lambda_2)$ are conjugate if and only if $\lambda_1+\lambda_2 \in \G(\F)$. We 
conclude that are exactly $|\H(\F)|$ conjugacy classes.
\end{proof}

\begin{lemma}\label{lemU6}
Let $R=\RR(N_1,D)\in \D$ be a nonabelian subgroup such that  $\d(\Z(R))=2$ and $\r(\Z(R))=1$. Then $R$ is conjugate to  one of the following subgroups:
$\RR_{E_{2,1}}$, $U_3(1,0,0)$, $U_3(0,1,0)$, $U_3(0,1,1)$, $U_4(\rho,-1,0)$, $U_4(\rho,0,\lambda)$ 
or $U_5(1,1,0)$, for some $\rho\in \F^\square$ and $\lambda\in \F$.
\end{lemma}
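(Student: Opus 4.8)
The plan is to repeat, for $\tilde R=N_1$, the strategy already used for $\tilde R=S_{(3,1)}$, $R_\rho$ and $U_1^3$ in Lemmas \ref{lemU3}, \ref{U4} and \ref{lemU5}. First I would record the explicit matrix form of the regular subgroup $N_1\in\Delta_3(\F)$ and of its linear map $\delta_{N_1}$, and then solve the compatibility equations \eqref{dt}: these force $D$ into a family of matrices in $\Mat_3(\F)$ depending on only a few parameters $\bar\alpha_1,\bar\alpha_2,\dots$. Imposing that $R=\RR(N_1,D)$ be nonabelian is an open condition on these parameters (an entry nonzero, or two entries distinct), and imposing $\r(\Z(R))=1$ eliminates a further parameter; note that for a unipotent group the equality $\r(\Z(R))=1$ already forces $\d(\Z(R))=2$, since a rank-one nilpotent matrix squares to zero, so the two standing hypotheses are effectively one. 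After this normalization only a short list of sub-families of $D$ survives.

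For each surviving sub-family I would conjugate $R$ to one of the candidates by exhibiting an isomorphism of split local algebras $\Psi\colon\A_{R_0}\to\A_R$ for the appropriate $R_0$ and invoking Proposition \ref{prop34} (occasionally a direct conjugation by some $g=\diag(1,\bar g)$ with $\bar g\in\GL_4(\F)$ is quicker). The candidates split by provenance: those inherited from $S_{(3,1)}$, namely $\RR_{E_{2,1}}$ (which is an $\RR_A$) together with $U_3(1,0,0)$, $U_3(0,1,0)$, $U_3(0,1,1)$ already met in Lemma \ref{lemU3}; those inherited from $R_\rho$, namely $U_4(\rho,-1,0)$ and $U_4(\rho,0,\lambda)$, where one uses the presentations of $\A_{U_4(\rho,-1,0)}$ and $\A_{U_4(\rho,0,\lambda)}$ recalled just above; and, when $\char\F=2$, the subgroup $U_5(1,1,0)$ inherited from $U_1^3$. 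The discrete invariants $\d(R)$, $\r(R)$, $\k(R)$ together with $\char\F$ are then used to organize the case split, that is, to decide which candidate a given value of the parameters of $D$ can possibly match.

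The main obstacle is precisely this change of provenance: it is not evident a priori that a subgroup built from $N_1$ can be conjugate to one built from $R_\rho$ (or, in characteristic $2$, from $U_1^3$), and making this rigorous requires producing the correct algebra isomorphisms $\Psi$. This is the one genuinely computational point, since one must match the multiplication table of $\A_R$, which depends on the parameters of $D$, with that of $\A_{U_4(\rho,0,\lambda)}$ or of $\A_{U_5(1,1,0)}$, reading off the correct $\rho\in\F^\square$ and $\lambda\in\F$; a non-square $\rho$ appears exactly when the relevant $2\times 2$ block of $D$ is anisotropic. Once the conjugacies are in place, nothing new is needed for the (partial) non-conjugacy statements implicit in the list: they follow from Lemmas \ref{lemU3}, \ref{U4}, \ref{lemU5}, from $\r(\RR_A)\le 2$, and from inspection of the invariants.
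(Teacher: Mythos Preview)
Your proposal is correct and follows essentially the same route as the paper: solve \eqref{dt} for $D$, reduce to a three-parameter family $U_6(\alpha_1,\alpha_2,\beta_2)$, and then case-split, producing for each case either an explicit conjugating element or an algebra isomorphism $\Psi$ onto one of the listed targets, exactly as you outline.

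One small correction to your normalization step: for $\tilde R=N_1$ the standing hypotheses do \emph{not} cut down the parameter count. Since $N_1$ is already nonabelian, every $\RR(N_1,D)$ is nonabelian, so that condition is vacuous here; and in the paper the fourth parameter (the $(2,1)$-entry $\bar\beta_1$ of $D$) is removed not by the condition $\r(\Z(R))=1$ but by the conjugation $g=I_5-\bar\beta_1E_{4,5}$. In fact the paper's case analysis runs over all $(\alpha_1,\alpha_2,\beta_2)\in\F^3$ without ever invoking the center hypotheses. This does not affect the validity of your plan, but be aware that the pattern from Lemmas \ref{lemU3}--\ref{lemU5} (where the base $\tilde R$ was abelian and the hypotheses genuinely constrained $D$) does not carry over verbatim.
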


\begin{proof}
Let $R=\RR(N_1,D)$ for some $D\in \Mat_3(\F)$.
By \eqref{dt}, $D=\begin{pmatrix} \bar\alpha_1 & \bar\alpha_2 & 0\\
\bar\beta_1 & \bar\beta_2 & 0 \\ 0 & 0 & 0 \end{pmatrix}$. 
Then, $R$ is conjugate via $g=I_5-\bar\beta_1 E_{4,5}$ to 
$$U_6(\alpha_1,\alpha_2,\beta_2)=\begin{pmatrix}
1 & x_1 & x_2 & x_3 & x_4\\
0 & 1 & 0 & 0 & \alpha_1 x_1+\alpha_2 x_2\\
0 & 0 & 1 & x_1 & \beta_2 x_2\\
0 & 0 & 0 & 1 & 0\\
0 & 0 & 0 & 0 & 1
    \end{pmatrix},
$$
for some $\alpha_1,\alpha_2,\beta_2\in \F$.
It is easy to see that $U_6(\alpha_1,\alpha_2,\beta_2)$ is  conjugate to some $\RR_A$ if and only if 
$\alpha_1=\alpha_2=\beta_2=0$. More  in detail, $U_6(0,0,0)^g=\RR_{E_{2,1}}$,
where $g=\diag(I_3,E_{1,2}+E_{2,1})$.

If $\alpha_1=\alpha_2=0$ and $\beta_2\neq 0$, then $\d(R)=3$ and $\r(R)=\k(R)=2$. 
The function $\Psi: \A_{U_3(1,0,0)} \rightarrow \A_R$ defined by
$$\Psi(t_1)=X_2,\qquad \Psi(t_2)=X_1$$
is an isomorphism. By Proposition \ref{prop34},  $R$ is conjugate to the subgroup $U_3(1,0,0)$.

If $\alpha_1\neq 0$ and $\alpha_2=\beta_2=0$, then $\d(R)=\r(R)=\k(R)=3$. 
The function $\Psi: \A_{U_3(0,1,0)} \rightarrow \A_R$ defined by
$$\Psi(t_1)=X_1,\qquad \Psi(t_2)= X_2$$
is an isomorphism and so $R$ is conjugate to  $U_3(0,1,0)$.

For the other values of $\alpha_1,\alpha_2,\beta_2$ we have $\d(R)=\r(R)=3$ and $\k(R)=2$.
If $\alpha_1,\beta_2\neq 0$ and $\alpha_2=0$, write $\frac{\alpha_1}{\beta_2}=\omega^2\rho$ for some $\omega \in \F^\ast$ and a unique $\rho \in \F^\square$:
the function $\Psi: \A_{U_4(\rho,0,0)} \rightarrow \A_R$ defined by
$$\Psi(t_1)=\omega X_2,\qquad \Psi(t_2)= X_1$$
is an isomorphism. Hence, $R$ is conjugate to $U_4(\rho,0,0)$.

We are left to consider the case where $\alpha_2\neq 0$. Suppose $\alpha_1=\beta_2=0$.
If $\char \F\neq 2$, write $-1=\omega^2 \rho$ for some $\omega \in \F^\ast$ and a unique $\rho \in \F^\square$:
the function $\Psi: \A_{U_4(\rho,-1,0)} \rightarrow \A_R$ defined by
$$\Psi(t_1)=\omega X_1+\omega X_2,\qquad \Psi(t_2)=X_1- X_2$$
is an isomorphism  and hence $R$ is conjugate to  $U_4(\rho,-1,0)$.
If $\char \F =2$, 
the function $\Psi: \A_{U_5(1,1,0)} \rightarrow \A_R$ defined by
$$\Psi(t_1)=X_1+X_2,\qquad \Psi(t_2)= X_2$$
is an isomorphism  and hence $R$ is conjugate to  $U_5(1,1,0)$.

If $(\alpha_1,\beta_2)\neq (0,0)$,  set  $\Delta=\alpha_1\beta_2-\alpha_2^2$. If $\Delta=0$ (and so $\beta_2\neq 0$) then the function $\Psi: \A_{U_3(0,1,1)} \rightarrow \A_R$ defined by
$$\Psi(t_1)=\beta_2 X_1,\qquad \Psi(t_2)=\beta_2 X_1-\alpha_2 X_2$$
is an isomorphism. Hence, $U$ is conjugate to  $U_3(0,1,1)$.
If $\Delta\neq 0$, write $\frac{\Delta}{\alpha_2^2}=\omega^2 \rho$ for some $\omega\in \F^\ast$ and a unique $\rho \in \F^\square$. The function $\Psi: \A_{U_4(\rho,0,\omega^{-1})} \rightarrow \A_R$ defined by
$$\left\{
\begin{array}{lll}
\Psi(t_1)=\alpha_2\omega X_2,& 
\Psi(t_2)=-\beta_2 X_1+\alpha_2 X_2 & \textrm{ if }\beta_2\neq 0,\\
\Psi(t_1)=\alpha_2\omega X_1-\alpha_1\omega X_2,& 
\Psi(t_2)=\alpha_2 X_1 & \textrm{ if }\beta_2= 0,\\
\end{array}\right.$$
is an isomorphism. Hence, $R$ is conjugate to $U_4(\rho,0,\omega^{-1})$.
 \end{proof}

\begin{lemma}\label{lemU7}
Assume $\char \F\neq 2$.
Let $R=\RR(N_2,D)\in \D$ be a nonabelian subgroup such that  $\d(\Z(R))=2$ and $\r(\Z(R))=1$. Then $R$ is 
conjugate to one of the following 
subgroups: 
$\RR_{(E_{1,2}-E_{2,1})}$, $U_3(1,-1,0)$ or $U_4(\rho,-1,0)$ for some $\rho\in 
\F^\square$.
\end{lemma}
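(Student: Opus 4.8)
The proof follows the same template used in Lemmas \ref{lemU6} through \ref{lemU5}: start from the general shape of $\RR(N_2,D)$, normalize, and then identify the isomorphism type of the associated split local algebra. First I would invoke the compatibility condition \eqref{dt} applied to the recipe for $N_2 \in \Delta_3(\F)$ (the "$\char\F\neq 2$" regular subgroup of $\AGL_3(\F)$) to pin down the admissible matrices $D\in\Mat_3(\F)$; as in the earlier lemmas this should force $D$ to have the block form $\left(\begin{smallmatrix}\bar\alpha_1 & \bar\alpha_2 & 0\\ \bar\beta_1 & \bar\beta_2 & 0\\ 0 & 0 & 0\end{smallmatrix}\right)$, and the requirement $\d(\Z(R))=2$, $\r(\Z(R))=1$ (together with nonabelianness) will further constrain the entries. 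Then a suitable $g=\diag(1,A,B)$ conjugates $R$ to a normal form $U_7(\alpha_1,\alpha_2,\beta_2)$ depending on a small number of parameters, exactly analogous to the passage to $U_5$ and $U_6$ in the preceding proofs.

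Next I would split into cases according to which of $\alpha_1,\alpha_2,\beta_2$ vanish. The degenerate case $\alpha_1=\alpha_2=\beta_2=0$ should give a subgroup conjugate to some $\RR_A$ — by inspection of the normal form one reads off that $A$ is (projectively congruent to) $E_{1,2}-E_{2,1}$, which is the antisymmetric case appearing in the statement. For the remaining cases I would exhibit, for each, an explicit algebra isomorphism $\Psi:\A_{R'}\to\A_R$ from one of the already-classified algebras $\A_{U_3(1,-1,0)}$ or $\A_{U_4(\rho,-1,0)}$ (the latter presented in Remark \ref{rem}), choosing $\Psi(t_1),\Psi(t_2)$ as explicit $\F$-linear combinations of $X_1,X_2$ with coefficients built from $\alpha_1,\alpha_2,\beta_2$; when a square root of the relevant discriminant $\Delta=\alpha_1\beta_2-\alpha_2^2$ (or of $-1$) is needed one writes it as $\omega^2\rho$ with $\rho\in\F^\square$, exactly as in the proof of Lemma \ref{lemU6}. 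Proposition \ref{prop34} then converts each algebra isomorphism into the desired conjugacy of regular subgroups.

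The main obstacle I anticipate is purely bookkeeping: correctly carrying out the normalization step so that the parameter count collapses to the three listed conjugacy types, and then guessing the right linear substitutions $\Psi(t_i)$ in each subcase so that the defining relations of $\A_{U_3(1,-1,0)}$ or $\A_{U_4(\rho,-1,0)}$ are matched on the nose. Since $\char\F\neq 2$ here, the quadratic-form manipulations (diagonalizing the $2\times 2$ block governing $\beta_2$ versus $\alpha_1$, extracting $\rho\in\F^\square$) behave well and there is no separability obstruction; so no genuinely new idea is required beyond the technique already deployed for $N_1$. I would finish by checking, via the invariants $\d,\r,\k$ and the non-conjugacy facts recorded in Remark \ref{rem}, that the three representatives $\RR_{(E_{1,2}-E_{2,1})}$, $U_3(1,-1,0)$ and $U_4(\rho,-1,0)$ are pairwise non-conjugate and that no collapsing among different $\rho$ occurs beyond what $\F^\square$ already accounts for.
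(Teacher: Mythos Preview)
Your proposal is correct and follows essentially the same route as the paper's own proof: reduce to a normal form $U_7(\alpha_1,\alpha_2,\beta_2)$ via an elementary conjugation, dispose of the all-zero case as $\RR_{(E_{1,2}-E_{2,1})}$, and then branch on a discriminant to land in $U_3(1,-1,0)$ or $U_4(\rho,-1,0)$ via explicit algebra isomorphisms and Proposition~\ref{prop34}. Two small corrections: the relevant discriminant here is $\Delta=4\alpha_1\beta_2-\alpha_2^2$ (not $\alpha_1\beta_2-\alpha_2^2$, which is the $N_1$ value; no $\sqrt{-1}$ is needed), and your final non-conjugacy check is superfluous since the lemma only claims that $R$ is conjugate to \emph{one} of the listed subgroups.
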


\begin{proof}
Let $R=\RR(N_2,D)$ for some $D\in \Mat_3(\F)$.
By \eqref{dt} we must have $D=\begin{pmatrix} \bar\alpha_1 & \bar\alpha_2 & 0\\
\bar\beta_1 & \bar\beta_2 & 0 \\ 0 & 0 & 0 \end{pmatrix}$. 
Then, $R$ is conjugate via $g=I_5-\bar\beta_1 E_{4,5}$ to 
$$U_7(\alpha_1,\alpha_2,\beta_2)=
\begin{pmatrix}
1 & x_1 & x_2 & x_3 & x_4\\
0 & 1 & 0 & -x_2 & \alpha_1 x_1+ \alpha_2 x_2\\
0 & 0 & 1 & x_1 & \beta_2 x_2\\
0 & 0 & 0 & 1 & 0\\
0 & 0 & 0 & 0 & 1
    \end{pmatrix}
$$
for some $\alpha_1,\alpha_2,\beta_2\in \F$.
It is easy to see that $U_7(\alpha_1,\alpha_2,\beta_2)$ is  conjugate to some $\RR_A$ if and only if $\alpha_1=\alpha_2=\beta_2=0$. In this case $\d(R)=\r(R)=\k(R)=2$ and  $U_7(0,0,0)^g=\RR_{(E_{1,2}-E_{2,1})}$ where $\diag(I_3,(E_{2,1}-E_{1,2}))$.

Suppose now $(\alpha_1,\alpha_2,\beta_2)\neq (0,0,0)$ and let $\Delta=4\alpha_1\beta_2-\alpha_2^2$.
Notice that in this case $\d(R)=\r(R)=3$ and $\k(R)=2$. 
Assume $\Delta=0$ and consider
the function 
$\Psi: \A_{U_3(1,-1,0)} \rightarrow \A_R$  given by
$$\left\{\begin{array}{lll}
\Psi(t_1)=X_1, &  \Psi(t_2)=\alpha_2 X_1-2\alpha_1 X_2
& \textrm{ if } \alpha_1\neq 0, \\
\Psi(t_1)=X_2, & \Psi(t_2)=X_1 & \textrm{ if } \alpha_1=\alpha_2=0.
\end{array}\right.$$
Since $\Psi$  is an isomorphism, we obtain that
$U_7(\alpha_1,\alpha_2,\beta_2)$ is conjugate to $U_3(1,-1,0)$.
Now, assume $\Delta\neq 0$ and write $\Delta=\omega^2\rho$ for some $\omega \in \F^\ast$ and a unique $\rho \in 
\F^\square$.
The function 
$\Psi: \A_{U_4(\rho,-1,0)} \rightarrow \A_R$  given by
$$\left\{\begin{array}{lll}
\Psi(t_1)=\omega X_1, &  \Psi(t_2)=\alpha_2 X_1-2\alpha_1 X_2
& \textrm{ if } \alpha_1\neq 0, \\
\Psi(t_1)=\omega(\beta_2+1)X_1 - \alpha_2\omega X_2, & \Psi(t_2)=\alpha_2(\beta_2-1)X_1 - \alpha_2^2 X_2& \textrm{ if } \alpha_1=0,
\end{array}\right.$$
is an isomorphism: $R$ is conjugate to $U_4(\rho,-1,0)$.
\end{proof}

\begin{lemma}\label{lemU8}
Let $R=\RR(N_{3,\lambda},D)\in \D$, $\lambda \in \F^\ast$, be a nonabelian subgroup such that  $\d(\Z(R))=2$ 
and 
$\r(\Z(R))=1$. Then, $R$ is conjugate 
to one of the following subgroups: $\RR_{A}$, where $A= E_{1,1}+E_{2,1}+\lambda E_{2,2}$,  $U_3(0,1,1)$, 
$U_3(1,\mu,0)$  for some $\mu \in \F\setminus\{0,1\}$,
$U_4(\rho,\beta_1,\beta_2)$ for some $\rho\in \F^\square$ and $\beta_1,\beta_2\in \F$, or 
$U_5(1,1,\omega)$  where $\omega\in \F$ is such that $\omega+\lambda \in \G(\F)$.
\end{lemma}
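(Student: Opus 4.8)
The plan is to follow the pattern of the proofs of Lemmas~\ref{lemU6} and~\ref{lemU7}. First I would recall the explicit matrix description of $N_{3,\lambda}\in\Delta_3(\F)$ from \cite[Section~7.2]{PT}. Writing $R=\RR(N_{3,\lambda},D)$ as in \eqref{Rtilde} for some $D\in\Mat_3(\F)$, the compatibility conditions \eqref{dt} specialised to $\delta_{N_{3,\lambda}}$ force the last row and last column of $D$ to vanish; the non-abelianity of $R$ together with the hypotheses $\d(\Z(R))=2$ and $\r(\Z(R))=1$ then restrict the four remaining entries, leaving $D=\begin{pmatrix}\bar\alpha_1&\bar\alpha_2&0\\\bar\beta_1&\bar\beta_2&0\\0&0&0\end{pmatrix}$ subject to an inequality ensuring non-commutativity, exactly as in Lemmas~\ref{lemU3}, \ref{lemU6} and~\ref{lemU7}. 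Conjugating $R$ by a suitable elementary matrix $I_5+cE_{4,5}$ then brings it into a three-parameter normal form, say $U_8(\alpha_1,\alpha_2,\beta_2)$, whose last column carries the scalar $\lambda$ inherited from $N_{3,\lambda}$.

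Next I would run the case analysis on $(\alpha_1,\alpha_2,\beta_2)$, mirroring the proof of Lemma~\ref{lemU6}: one splits according to which of these scalars vanish and, in the remaining generic situation, according to the vanishing of a discriminant $\Delta$ — an expression in $\alpha_1,\alpha_2,\beta_2$ and $\lambda$ of the type $\alpha_1\beta_2-\alpha_2^2$ when $\char\F=2$ and $4\alpha_1\beta_2-\alpha_2^2$ otherwise. When all three parameters vanish, one exhibits $g=\diag(1,\bar g)$ with $\bar g\in\GL_4(\F)$ such that $U_8(0,0,0)^g=\RR_A$ for $A=E_{1,1}+E_{2,1}+\lambda E_{2,2}$, just as $\RR_{E_{2,1}}$ was reached in Lemma~\ref{lemU6}. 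In each of the other cases I would write down an explicit isomorphism of split local algebras $\Psi\colon\A_{U_?(\cdots)}\rightarrow\A_R$, sending the canonical generators $t_1,t_2$ to suitable $\F$-linear combinations of $X_1,X_2,X_3$, check that $\Psi$ respects the defining relations of the source algebra (the presentations of $\A_{U_3(0,1,1)}$, $\A_{U_3(1,\mu,0)}$, $\A_{U_4(\rho,\beta_1,\beta_2)}$ and $\A_{U_5(1,1,\lambda)}$ recalled in Section~\ref{nonab}), and conclude via Proposition~\ref{prop34}. In this way one reaches $U_3(0,1,1)$ and $U_3(1,\mu,0)$ in the degenerate subcases — with $\mu$ a nonzero scalar built from $\lambda$, hence $\mu\in\F\setminus\{0,1\}$ — while the generic subcase $\Delta\neq0$ gives, after absorbing a square into $\rho\in\F^\square$, either $U_4(\rho,\beta_1,\beta_2)$ for suitable $\beta_1,\beta_2\in\F$ or, in the relevant characteristic-$2$ situation, $U_5(1,1,\lambda)$. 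For the latter, since $\A_R\cong\A_{U_5(1,1,\lambda)}$ and, by Lemma~\ref{lemU5}, $U_5(1,1,\lambda)$ is conjugate to $U_5(1,1,\omega)$ exactly when $\lambda+\omega\in\G(\F)$, the subgroup $R$ is conjugate to $U_5(1,1,\omega)$ for every such $\omega$, which is the form appearing in the statement. Since here we only need every such $R$ to land in the asserted list, no non-conjugacy argument between the outcomes is required.

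The main obstacle is the case-by-case bookkeeping — in particular, choosing the correct generators for each $\Psi$ and verifying, against every defining relation, that $\Psi$ is well defined and bijective. The subtlest points are: confirming that the parameter $\mu$ arising in the $U_3(1,\mu,0)$ case really lies in $\F\setminus\{0,1\}$ (non-vanishing from $\lambda\neq0$, and $\mu\neq1$ from the non-abelianity of $R$); checking that the first parameter of the $U_4$ output is $\neq1$, again a consequence of non-abelianity; and tracking exactly how $\lambda$ enters the relation $t_2^2=\lambda t_1^2$ of $\A_{U_5(1,1,\lambda)}$, so that, after folding in the $\G(\F)$-coset ambiguity supplied by Lemma~\ref{lemU5}, the characteristic-$2$ conclusion comes out precisely as $\omega+\lambda\in\G(\F)$.
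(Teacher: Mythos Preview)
Your overall plan---normalize via \eqref{dt} and an $I_5+cE_{4,5}$ conjugation, then case-split and land in the target list---is sound, but several details are off and the route diverges considerably from the paper's.

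First, two small corrections. Since $N_{3,\lambda}$ is already nonabelian (it lies on the nonabelian side of the $\Delta_3(\F)$ classification), every $\RR(N_{3,\lambda},D)$ is nonabelian regardless of $D$; there is no ``inequality ensuring non-commutativity'' on the entries of $D$ here. Also, the parameter $\mu$ in the $U_3(1,\mu,0)$ outcome is \emph{not} built from $\lambda$: in the paper it turns out to be one of the free parameters (namely $\alpha_2$), and the constraints $\mu\neq 0,1$ come from the case structure, not from non-abelianity.

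More substantially, the case analysis you propose (split on which of $\alpha_1,\alpha_2,\beta_2$ vanish, then on a discriminant of the shape $4\alpha_1\beta_2-\alpha_2^2$) does not fit $N_{3,\lambda}$. The paper first eliminates $\bar\beta_2$ via $g=I_5-\frac{\bar\beta_2}{\lambda}E_{4,5}$, obtaining a form $\tilde U_8(\lambda,\alpha_1,\alpha_2,\beta_1)$, and then performs a \emph{second} normalization to $\beta_1=1$ (requiring three separate conjugations according as $\beta_1\neq 0$, or $\beta_1=0$ with $\alpha_2\neq 0$, or $\beta_1=\alpha_2=0$). Only after this does the real split begin, and it is governed by whether $\alpha_2=1$ and by the discriminant
\[
\Delta=-\lambda(\alpha_2-1)(\alpha_1-\alpha_2+1)-1,
\]
which genuinely involves $\lambda$ and is not of the quadratic-form type you guess. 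The $U_5$ outcome appears only in the single subcase $\alpha_2=1$, $\alpha_1=0$, $\char\F=2$ (so your remark that the ``generic subcase $\Delta\neq 0$'' leads to $U_5$ is inaccurate). Finally, the paper carries out every step by exhibiting explicit conjugating matrices $g=\diag(1,A,B)$ with $A,B\in\GL_2(\F)$ (and, for the $U_3$ outcomes, full $4\times 4$ blocks), rather than via algebra isomorphisms $\Psi$; given the complexity of the targets here, this turns out to be the more tractable bookkeeping. Your $\Psi$-based approach is legitimate in principle via Proposition~\ref{prop34}, but you should expect it to be significantly harder to execute for $N_{3,\lambda}$ than for $N_1$ or $N_2$.
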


\begin{proof}
Let $R=\RR(N_{3,\lambda},D)$ for some $\lambda \in \F^\ast$ and some $D\in \Mat_3(\F)$.
By \eqref{dt} $D=\begin{pmatrix} \bar \alpha_1 & \bar \alpha_2 & 0\\
\bar \beta_1 & \bar \beta_2 & 0 \\ 0 & 0 & 0 \end{pmatrix}$. 
Then, $R$ is conjugate via $g=I_5-\frac{\bar\beta_2}{\lambda} E_{4,5}$ to 
$$\tilde U_8(\lambda,\alpha_1,\alpha_2,\beta_1)=
\begin{pmatrix}
1 & x_1 & x_2 & x_3 & x_4\\
0 & 1 & 0 & x_1+x_2 & \alpha_1 x_1+\alpha_2 x_2\\
0 & 0 & 1 & \lambda x_2 & \beta_1 x_1\\
0 & 0 & 0 & 1 & 0\\
0 & 0 & 0 & 0 & 1
    \end{pmatrix}
$$
for some $\alpha_1,\alpha_2,\beta_1\in \F$.
It is easy to see that $\tilde U_8(\lambda,\alpha_1,\alpha_2,\beta_1)$ is  conjugate to some $\RR_A$ if and only if 
$\alpha_1=\alpha_2=\beta_1=0$. In this case $\d(R)=3$ and $\r(R)=\k(R)=2$ and  $\tilde U_8(\lambda,0,0,0)^g=\RR_A$, where 
$g=E_{1,1}+E_{2,3}+E_{5,4}-\lambda E_{2,2}+\lambda E_{3,3}+\lambda^2 E_{4,5}$ and
$A=E_{1,1}+E_{2,1}+\lambda E_{2,2}$.

So, suppose $(\alpha_1,\alpha_2,\beta_1)\neq (0,0,0)$: we have 
$\d(R)=\r(R)=3$ and $\k(R)=2$.
We may also assume $\beta_1=1$. Namely, if $\beta_1\neq 0$ it suffices to conjugate $\tilde U_8(\lambda,\alpha_1,\alpha_2,\beta_1)$ by 
$g=\diag(1,\beta_1,\beta_1,\beta_1^2,\beta_1)$.
If $\beta_1=0$ and $\alpha_2\neq 0$, then we conjugate by the matrix
$\diag\left(1,\begin{pmatrix} 1 & -1 \\ \lambda & 0\end{pmatrix},
\begin{pmatrix} \lambda & (\lambda\alpha_1-\alpha_2)\alpha_2^{-1} \\ 0 & 
-\lambda\alpha_2^{-1}\end{pmatrix}\right)$ and if
$\alpha_2=\beta_1=0$,  we conjugate by the matrix
$\diag\left(1,\begin{pmatrix} 0 & \alpha_1 \\ -\lambda\alpha_1 & \alpha_1\end{pmatrix},\begin{pmatrix} \lambda\alpha_1^2 & -\lambda\alpha_1^2 \\ 0 & \lambda\alpha_1\end{pmatrix}\right)$.

Now, set $U_8(\lambda, \alpha_1,\alpha_2)=\tilde U_8(\lambda,\alpha_1,\alpha_2,1)$
and
$\Delta=-\lambda(\alpha_2-1)(\alpha_1-\alpha_2+1)-1$.
Suppose $\alpha_2=1$. In this case $\Delta=-1$ that we write as $-1=\omega^2\rho$ for some $\omega \in \F^\ast$ and a unique 
$\rho \in \F^\square$.
If $\alpha_1\neq 0,1$, then 
$U_8(\lambda, \alpha_1,1)^g=
U_4\left(\rho,\frac{1}{1-\alpha_1},\frac{\alpha_1^2\lambda-\alpha_1+2}{\omega(\alpha_1-1)}\right)$,
where  
$$g=\diag\left(1,\begin{pmatrix} \alpha_1 & 0 \\ 1 & \omega \end{pmatrix},\begin{pmatrix} 0 & \frac{\omega \alpha_1^2}{\alpha_1-1} \\ \alpha_1 & 
\frac{\omega \alpha_1}{1-\alpha_1} \end{pmatrix}\right).$$
If $\alpha_1=0$ and $\char \F\neq 2$, then 
$U_8(\lambda, 0,1)^g=
U_4\left(\rho,1-\frac{2}{\lambda},-\frac{2(\lambda+1)}{\omega\lambda}\right)$,
where  
$g=\diag\left(1,\begin{pmatrix}
\omega^{-1} & 1 \\ \omega^{-1} & -1 \end{pmatrix},
\begin{pmatrix} 
0 &  -\frac{4}{\omega\lambda}\\
2\omega^{-2} & \frac{2(\lambda+2)}{\omega\lambda} \end{pmatrix}\right)$.
If $\alpha_1=0$ and $\char \F=2$, then
$U_8(\lambda, 0,1)^g=
U_5(1,1,\lambda+\mu+\mu^2)$,
where $g=\diag\left(1,\begin{pmatrix}
1 & 0 \\ \mu & 1 \end{pmatrix},\begin{pmatrix} 
0 &  1\\ 1 & \mu\end{pmatrix}\right)$.
If $\alpha_1=1$, then 
$U_8(\lambda, 1,1)^g=
U_4\left(\rho,0,-(\lambda+1)\omega^{-1}\right)$,
where 
$$g=\diag\left(1,\begin{pmatrix}
0 & 1 \\ \omega^{-1} & 1 \end{pmatrix},
\begin{pmatrix} 
0 &  -\omega^{-1}\\ -\omega^{-2} & 
-\lambda \omega^{-1} \end{pmatrix}\right).$$

Hence, we may now suppose $\alpha_2\neq 1$.  If $\Delta\neq 0$, we write $\Delta=\rho\omega^2$ for some 
$\omega\in\F^\ast$ and a unique $\rho \in 
\F^\square$.
Then 
$U_8(\lambda, \alpha_1,\alpha_2)^g=U_4(\rho,\alpha_2,-(\alpha_1\alpha_2\lambda-\alpha_1\lambda+\alpha_2+1)\omega^{-1})$
where 
$$g=\diag\left(1,\begin{pmatrix} 1 & \omega \\ \lambda(1-\alpha_2) & 0 \end{pmatrix}, \begin{pmatrix} \lambda (1-\alpha_2)^2 & 0 \\ \lambda(1-\alpha_2) & \omega\lambda(1-\alpha_2) \end{pmatrix}\right).$$
If $\Delta=0$, i.e. $\alpha_1=\frac{ \lambda(\alpha_2-1)^2-1}{\lambda (\alpha_2-1)}$, we need to distinguish  two cases.
If $\lambda(\alpha_2-1)^2 + \alpha_2\neq 0$, then $U_8(\lambda,\alpha_1,\alpha_2)^g=U_3(0,1,1)$, where
$$g=\diag\left(1, \begin{pmatrix} 1 &  0 &  \lambda(1-\alpha_2)-1 & 0 \\
 \lambda(1-\alpha_2) &  0 &  -\lambda&  0 \\
  0 &  \lambda(\alpha_2-1)^2&  0&  \lambda\alpha_2\\
 0 &  \lambda(1-\alpha_2) &  0  & \lambda^2(\alpha_2-1)
\end{pmatrix} \right),$$
and  if $\lambda=\frac{-\alpha_2}{(\alpha_2-1)^2}$, then $U_8(\lambda,\alpha_1,\alpha_2)^g=U_3(1,\alpha_2,0)$,
where
$$g=\diag\left(1,\begin{pmatrix}
  1 &  0 &  0 &  0 \\
 \frac{\alpha_2}{\alpha_2-1} &  0 &  1 &  0 \\
 0 &  -\alpha_2 &  0 & 1-\alpha_2^2\\
 0 & \frac{\alpha_2}{\alpha_2-1} &  0 & \alpha_2      
                \end{pmatrix}\right).$$
\end{proof}

\begin{cor}\label{cortab}
Let $R\in \D$ be a nonabelian  subgroup such that $\d(\Z(R))=2$ and $\r(\Z(R))=1$. Suppose that  
$R$ is not conjugate to any $\RR_A$, $A\in \Mat_3(\F)$. Then $R$ is conjugate to exactly one of the subgroups listed in Table \ref{tab3}.
\end{cor}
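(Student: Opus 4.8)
The plan is to assemble the candidates produced by Lemmas~\ref{lemU3}--\ref{lemU8} into one list and then remove the redundancies. Recall from Section~\ref{sec2} that a nonabelian $R\in\D$ with $\d(\Z(R))=2$ and $\r(\Z(R))=1$ can be written as $\RR(\tilde R,D)$ with $\tilde R\in\Delta_3(\F)$ and $D\in\Mat_3(\F)$ satisfying \eqref{dt}; since $\tilde R=S_{(4)}$ forces $R$ abelian and $\tilde R=\Tr_3$ gives $R=\RR_D$, we are reduced to $\tilde R\in\{S_{(3,1)},R_\rho,U_1^3,N_1,N_2,N_{3,\lambda}\}$, which are precisely the six cases treated in Lemmas~\ref{lemU3}, \ref{U4}, \ref{lemU5}, \ref{lemU6}, \ref{lemU7}, \ref{lemU8}. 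After discarding the subgroups conjugate to some $\RR_A$ (these are excluded by hypothesis), each of those lemmas leaves candidates drawn from the families $U_3(\cdot)$, $U_4(\cdot)$ and, when $\char\F=2$, $U_5(\cdot)$.

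First I would record, for every surviving candidate, the triple $(\d(R),\r(R),\k(R))$ --- all of these are already computed in the statements and proofs of the six lemmas --- and use the fact that these three numbers are conjugacy invariants (Section~\ref{prel}). This immediately rules out any identification with an $\RR_A$: indeed $\r(\RR_A)\le 2$, and the only surviving candidate with $\r(R)=2$ is $U_3(1,0,0)$, for which Lemma~\ref{lemU3} excludes conjugacy to $\RR_A$ by a direct computation.

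Next I would resolve the conjugacies inside each family. For the $U_3$'s this is the content of Lemma~\ref{lemU3}: the classes $U_3(0,1,0)$, $U_3(0,1,1)$ and $U_3(1,\lambda,0)$ for $\lambda\in\F\setminus\{1\}$ (the value $\lambda=0$ giving $U_3(1,0,0)$) are pairwise non-conjugate, $U_3(1,\lambda_1,0)$ and $U_3(1,\lambda_2,0)$ are conjugate iff $\lambda_1=\lambda_2$, and the occurrences of $U_3(0,1,1)$ from Lemmas~\ref{lemU3}, \ref{lemU6}, \ref{lemU8} --- as well as those of $U_3(0,1,0)$ from Lemmas~\ref{lemU3}, \ref{lemU6} --- collapse to single classes. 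For the $U_4$'s the conjugacy criterion established in Lemma~\ref{U4} is exactly ``$\rho_1=\rho_2$ and $(\beta_1,\beta_2)\rel_{\rho_1}(\beta_3,\beta_4)$'', so the occurrences $U_4(\rho,-1,0)$ (Lemmas~\ref{lemU6}, \ref{lemU7}), $U_4(\rho,0,\lambda)$ (Lemma~\ref{lemU6}) and $U_4(\rho,\beta_1,\beta_2)$ (Lemma~\ref{lemU8}) merge into the single parametrised family $U_4(\rho,\beta_1,\beta_2)$ with $\rho\in\F^\square$ and $(\beta_1,\beta_2)\in\CC_\rho(\F)$; the auxiliary identifications required here are exactly those listed in Remark~\ref{rem}. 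For the $U_5$'s (only when $\char\F=2$), Lemma~\ref{lemU5} gives that $U_5(1,1,\lambda)$ depends only on $\lambda+\G(\F)$, producing the family $U_5(1,1,\epsilon)$ with $\epsilon\in\H(\F)$.

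Finally I would check completeness --- every admissible parameter value occurring in Lemmas~\ref{lemU3}--\ref{lemU8} has been shown to land in one of these rows --- and that there is no conjugacy between distinct families; when $(\d,\r,\k)$ fails to separate two candidates coming from different families I would appeal to the algebra presentations $\A_{U_3(\cdots)}$, $\A_{U_4(\cdots)}$, $\A_{U_5(\cdots)}$ recorded in the preceding lemmas and remarks (for instance, in a $U_3$-algebra $t_2^2$ is either $0$ or equal to $t_2t_1$, never a combination $\rho t_1^2+\lambda t_1t_2$ with $\rho\ne 0$, which separates it from the $U_4$'s, while the $U_5$'s live only in characteristic $2$ and satisfy $t_1t_2+t_2t_1=t_1^2$, matched by no $U_4$). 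The surviving classes are then exactly the entries of Table~\ref{tab3}. I expect the main obstacle to be the bookkeeping inside the $U_4$ layer: the same family is obtained from four different $\tilde R$'s with differently normalised parameters, and one must verify via Remark~\ref{rem} that the union of the parameter sets so obtained equals, modulo $\rel_\rho$, the full set $\{(\rho,\beta_1,\beta_2):\rho\in\F^\square,\ (\beta_1,\beta_2)\in\CC_\rho(\F)\}$, with each class occurring exactly once.
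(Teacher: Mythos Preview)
Your proposal is correct and follows essentially the same route as the paper's own (very terse) proof: assemble the outputs of Lemmas~\ref{lemU3}--\ref{lemU8}, discard the $\RR_A$'s, use the invariants $(\d,\r,\k)$ to isolate $U_3(0,1,0)$ and $U_3(1,0,0)$, and then separate the remaining families sharing $(\d,\r,\k)=(3,3,2)$.

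Two minor remarks. First, the ``main obstacle'' you anticipate in the $U_4$ layer is not there: Lemma~\ref{U4} already delivers the \emph{entire} family $\{U_4(\rho,\beta_1,\beta_2):\rho\in\F^\square,\ (\beta_1,\beta_2)\in\CC_\rho(\F)\}$ as pairwise non-conjugate classes, so the $U_4$'s produced in Lemmas~\ref{lemU6}--\ref{lemU8} are automatically absorbed into that list and require no separate bookkeeping or coverage check. Second, your proposed separation of $U_3$ from $U_4$ by inspecting the relation satisfied by $t_2^2$ is basis-dependent as written (an isomorphism of algebras need not send your chosen $t_2$ to the other algebra's $t_2$), so it would have to be recast via a genuine invariant of the algebra; the paper itself is content to say ``direct computations'' at this point.
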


\begin{table}[t]
 $$\begin{array}{ccccl}\\
R & \d(R) & \r(R) & \k(R) & \mathrm{Conditions}\\\hline
U_3(0,1,0) &  3 & 3 & 3 \\
U_3(0,1,1) & 3 &  3 & 2\\
U_3(1,\lambda,0) & 3 & 3& 2 & \lambda \in \F\setminus\{0,1\}\\
U_4(\rho,\beta_1,\beta_2)  & 3 & 3 & 2 & \rho \in \F^\square,\; (\beta_1,\beta_2)\in \CC_\rho(\F) \\
U_5(1,1,\epsilon)  &  3 & 3 & 2 & \char \F=2, \; \epsilon\in \H(\F)\\
U_3(1,0,0) &  3 & 2 & 2
\end{array}$$
\caption{Representatives for conjugacy classes of nonabelian subgroups having $\d(\Z(R))=2$ and $\r(\Z(R))=1$.}\label{tab3}
\end{table}

\begin{proof}
In view of Lemmas \ref{lemU3}, \ref{U4}, \ref{lemU5}, \ref{lemU6}, \ref{lemU7} and \ref{lemU8}, $R$ is conjugate to one of the subgroups listed in Table \ref{tab3}. Since $\d(\RR_A)\leq 2$, it follows from Lemma \ref{lemU3} that  these subgroups are not conjugate to $\RR_A$ for any $A\in \Mat_3(\F)$.
Direct computations show that no pair of subgroups $U_3(0,1,1)$, $U_3(1,\lambda,0)$, $U_4(\rho,\beta_1,\beta_2)$ and $U_5(1,1,\epsilon)$ is conjugate in $\AGL_4(\F)$.
\end{proof}

We can now give the classification of the nonabelian subgroups in $\D$.

\begin{theorem}\label{main_nonab}
Let $\F$ be a field.
The distinct conjugacy classes of nonabelian regular subgroups in $\D$ can be represented by
the subgroups described in the first column of Table \ref{tab2}.
\end{theorem}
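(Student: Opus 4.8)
The plan is to assemble the classification of nonabelian regular subgroups in $\D$ by combining, case by case, the partial results already established in Section \ref{nonab}, organized according to the value of the pair $(\d(\Z(R)),\r(\Z(R)))$. By the analysis recalled at the start of Section \ref{nonab}, a nonabelian $R\in\D$ satisfies $(\d(\Z(R)),\r(\Z(R)))\in\{(3,2),(2,2),(2,1)\}$, so I would treat these three regimes in turn and then remove redundancies.

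First I would dispose of the case $(\d(\Z(R)),\r(\Z(R)))=(3,2)$: here $R$ is conjugate to some $\RR_D$ with $D\in\Mat_3(\F)$, and since $R$ is nonabelian, $D$ is not symmetric. By Lemma \ref{RD} the conjugacy classes of such $\RR_D$ correspond to projective congruence classes of non-symmetric (``asymmetric'') matrices in $\Mat_3(\F)$, i.e.\ to the set $\Pi_A(\F)$; this yields the $\RR_D$ row of Table \ref{tab2}. Next, for $(\d(\Z(R)),\r(\Z(R)))=(2,2)$, Lemma \ref{lemU2} tells us that, apart from the subgroups already of shape $\RR_A$ (which fall in the previous case), $R$ is conjugate to exactly one of $U_2(0,1,0,1,1)$ or $U_2(0,1,0,0,1)$, giving two further rows. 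Finally, for $(\d(\Z(R)),\r(\Z(R)))=(2,1)$, I would invoke Corollary \ref{cortab}: any $R$ in this regime not conjugate to an $\RR_A$ is conjugate to exactly one of $U_3(0,1,0)$, $U_3(0,1,1)$, $U_3(1,\lambda,0)$ with $\lambda\in\F\setminus\{0,1\}$, $U_4(\rho,\beta_1,\beta_2)$ with $\rho\in\F^\square$ and $(\beta_1,\beta_2)\in\CC_\rho(\F)$, $U_5(1,1,\epsilon)$ with $\char\F=2$ and $\epsilon\in\H(\F)$, or $U_3(1,0,0)$; and $U_3(1,0,0)$ coincides with $U_3(1,\lambda,0)$ for $\lambda=0$, so these are exactly the remaining $U_3$, $U_4$, $U_5$ rows of Table \ref{tab2}.

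It then remains to check that the list so obtained has no repetitions \emph{across} the three regimes. The parameters $\d(\Z(R))$ and $\r(\Z(R))$ are conjugacy invariants, so subgroups from different regimes are automatically non-conjugate; within the $(3,2)$ regime distinctness is Lemma \ref{RD} plus the definition of $\Pi_A(\F)$, within $(2,2)$ it is the $\k$-invariant computation in Lemma \ref{lemU2}, and within $(2,1)$ it is Corollary \ref{cortab}. The one genuine subtlety is that a given abstract algebra can have several regular-subgroup presentations with different centres, so I must make sure that none of the $U_i$ listed here is secretly conjugate to an $\RR_A$ with $A$ asymmetric (which would already appear in the $\RR_D$ row): this is exactly what is recorded in Lemmas \ref{lemU2}, \ref{lemU3} and Corollary \ref{cortab} via the observation $\r(\RR_A)\le 2$ together with the fact that all the surviving $U_i$ have $\r(R)=3$ except $U_3(1,0,0)$, whose non-conjugacy to any $\RR_A$ is asserted in Lemma \ref{lemU3} by direct computation. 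Collecting these facts gives precisely the first column of Table \ref{tab2}, completing the proof.

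The main obstacle, and the place where I would be most careful, is this cross-regime bookkeeping: the same nonabelian algebra may be realized as $\RR(\tilde R,D)$ for several different $\tilde R\in\Delta_3(\F)$ (indeed Lemmas \ref{lemU6}, \ref{lemU7}, \ref{lemU8} all feed subgroups of type $U_3$, $U_4$, $U_5$ into the common pool), so I must trust that Corollary \ref{cortab} has already reconciled all those overlaps and that the only remaining potential collision is with the $\RR_A$ family, which is ruled out by the rank invariant. Everything else is a matter of transcribing the representatives and their defining conditions into Table \ref{tab2}.
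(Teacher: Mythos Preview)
Your proposal is correct and follows essentially the same approach as the paper's own proof: split according to $(\d(\Z(R)),\r(\Z(R)))\in\{(3,2),(2,2),(2,1)\}$, invoke Lemma~\ref{lemU2} and Corollary~\ref{cortab} for the latter two regimes, and collect all the $\RR_A$ with asymmetric $A$ into the single row parametrized by $\Pi_A(\F)$ via Lemma~\ref{RD}. You are, if anything, more explicit than the paper about the cross-regime bookkeeping (the paper's proof is three sentences and leaves the non-overlap between the $U_i$'s and the $\RR_A$'s implicit in the cited lemmas).
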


\begin{proof}
Let $R\in \D$ be a nonabelian regular subgroup. By the considerations given in Section \ref{sec2} we have the following possibilities: $(\d(\Z(R)),\r(\Z(R)))=(3,2)$, $(2,2)$ or $(2,1)$.
If  $(\d(\Z(R)),\r(\Z(R)))=(2,2)$ then $R$ is conjugate either to $U_2(0,1,0,1,1)$ or to $U_2(0,1,0,0,1)$, by Lemma \ref{lemU2}. If  $(\d(\Z(R)),\r(\Z(R)))=(2,1)$, then $R$ is conjugate to either one of the subgroups of Table \ref{tab3} or to a subgroup $\RR_A$, by Corollary \ref{cortab}.
If $(\d(\Z(R)),\r(\Z(R)))=(3,2)$, then $R$ is conjugate to a subgroup $\RR_A$.
The statement of the theorem is proved considering a set $\Pi_A(\F)$ of representatives for 
projective congruent 
classes of asymmetric matrices in $\Mat_3(\F)$.
\end{proof}

We would like to give  explicit sets $\CC_\rho$, at least when $\F$ is algebraically closed, the real field $\R$ or a finite field.
Observe that, for every field $\F$ of characteristic $\neq 2$,   $(-1,0)\in \CC_\rho$ for all $\rho \in \F^\square$, by Remark 
\ref{rem}(a).

Fixing $\rho\in \F^\square$, $\beta_1\in \F\setminus\{1\}$ and $\beta_2\in \F$ such that $(\beta_1,\beta_2)\neq (-1,0)$,
define the following polynomials in $\F[t]$:
\begin{equation}\label{f12}
\begin{array}{rclcrcl}
f_1(t) & = &\beta_1 t^2+\beta_2 t-\rho, & \quad &
f_2(t) & = & \beta_2 t^2-2\rho(\beta_1+1)t-\rho\beta_2,\\
g_1(t) & = &  t^2+\beta_2 t -\rho\beta_1, & \quad & g_2(t) & = & t^2+\rho.
\end{array}
\end{equation}

\begin{lemma}\label{f1}
Suppose that $\beta_1\neq 1$ and $(\beta_1,\beta_2)\neq (-1,0)$. If either $\beta_1=0$ or $f_1(t)$ is reducible, then 
$U_4(\rho,\beta_1,\beta_2)$ is conjugate to $U_4(\rho,0,\lambda)$ for some $\lambda \in \F$.
\end{lemma}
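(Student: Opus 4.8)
The statement claims that if $\beta_1 \neq 1$, $(\beta_1,\beta_2) \neq (-1,0)$, and either $\beta_1 = 0$ or $f_1(t) = \beta_1 t^2 + \beta_2 t - \rho$ is reducible over $\F$, then $U_4(\rho,\beta_1,\beta_2)$ is conjugate to some $U_4(\rho,0,\lambda)$. The natural tool is Lemma \ref{U4}: $U_4(\rho,\beta_1,\beta_2)$ is conjugate to $U_4(\rho,0,\beta_4)$ precisely when $(\beta_1,\beta_2) \rel_\rho (0,\beta_4)$, i.e. when there is $t \in \F$ with $(t^2+\rho)(t^2+\beta_2 t - \rho\beta_1) \neq 0$ such that
$$0 = \frac{\beta_1 t^2 + \beta_2 t - \rho}{t^2 + \beta_2 t - \rho\beta_1} = \frac{f_1(t)}{g_1(t)},$$
and then $\beta_4 = \pm f_2(t)/g_1(t)$. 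So the whole proof reduces to: \emph{find a root $t_0$ of $f_1$ that is not a root of $g_2(t) = t^2+\rho$ and not a root of $g_1(t)$}; set $\lambda = f_2(t_0)/g_1(t_0)$.

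The plan is therefore first to dispose of the case $\beta_1 = 0$: then $f_1(t) = \beta_2 t - \rho$ is genuinely linear (note $\beta_2 \neq 0$, since $(\beta_1,\beta_2)=(0,0)$ is excluded by $(\beta_1,\beta_2)\neq(-1,0)$... actually one must check: if $\beta_1 = 0$ then $(\beta_1,\beta_2)\neq(-1,0)$ is automatic, but one needs $\beta_2 \neq 0$ — when $\beta_2 = 0$ and $\beta_1 = 0$ the group $U_4(\rho,0,0)$ is already of the desired form with $\lambda = 0$, so that subcase is trivial). With $\beta_2 \neq 0$ the unique root is $t_0 = \rho/\beta_2$; I then check $g_2(t_0) = \rho^2/\beta_2^2 + \rho \neq 0$ (equivalently $\rho \neq -\beta_2^2$, which would force $f_1$ to share a root with $g_2$ — handle this potential exceptional value separately, or observe it cannot happen since... this needs a small argument) and $g_1(t_0) \neq 0$, and read off $\lambda$.

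Next, when $\beta_1 \neq 0$ and $f_1$ is reducible, $f_1$ has (at least) one root $t_0 \in \F$. The core task is to arrange that $t_0$ can be chosen with $g_1(t_0) \neq 0$ and $g_2(t_0) \neq 0$. The key algebraic observations are: $f_1$ and $g_1 = t^2 + \beta_2 t - \rho\beta_1$ have a common root only if their resultant vanishes, and a short computation shows $g_1(t) - \frac{1}{\beta_1} f_1(t)$ (or a similar combination) is linear, so a common root would be forced to a single explicit value of $t$ which one checks makes $f_1$ irreducible or contradicts $(\beta_1,\beta_2)\neq(-1,0)$; similarly $f_1$ and $g_2 = t^2+\rho$ share a root only under an explicit constraint, and in the bad case where \emph{both} roots of $f_1$ are "forbidden" one derives $(\beta_1,\beta_2)=(-1,0)$, the excluded case. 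I expect this bookkeeping — showing that excluding the roots of $g_1 g_2$ still leaves a usable root of $f_1$, and that the only obstruction is the excluded pair $(-1,0)$ — to be the main obstacle; it is elementary but requires care with the resultant computations and with the case $f_1$ having a double root. Once a good $t_0$ is secured, Lemma \ref{U4} immediately gives $U_4(\rho,\beta_1,\beta_2) \cong U_4(\rho,0,\lambda)$ with $\lambda = f_2(t_0)/g_1(t_0)$, completing the proof.
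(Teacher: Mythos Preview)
Your approach is correct and is the same strategy as the paper's: pick a root $a$ of $f_1$, verify it can be chosen with $g_1(a)g_2(a)\neq 0$, and set $\lambda=f_2(a)/g_1(a)$. Two simplifications you missed, however, make the execution much shorter.

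First, the case $\beta_1=0$ needs no work at all: $U_4(\rho,0,\beta_2)$ is already of the form $U_4(\rho,0,\lambda)$ with $\lambda=\beta_2$, for every $\beta_2$. Your root-finding there (and the worry about $\rho=-\beta_2^2$) is unnecessary.

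Second, for $\beta_1\neq 0$ the paper bypasses resultants via the identity
\[
f_1(t)=g_1(t)+(\beta_1-1)\,g_2(t),
\]
which you can check directly from \eqref{f12}. Since $\beta_1\neq 1$, this shows that $f_1(a)=g_1(a)=0$ forces $g_2(a)=0$; hence the only obstruction is that \emph{every} root of $f_1$ be a root of $g_2$. But $g_2(t)=t^2+\rho$ has roots only when $\rho=-\omega^2$, and those roots are $\pm\omega$; plugging into $f_1$ and factoring $f_1(t)=(t\mp\omega)(\beta_1 t\pm\omega)$ forces the second root to be $\pm\omega$ as well, hence $\beta_1=\pm 1$. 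With $\beta_1=1$ excluded, one gets $\beta_1=-1$ and then $\beta_2=0$, the excluded pair. This replaces your two resultant computations and the double-root case analysis by a single identity and a two-line factorization.
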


\begin{proof}
Clearly, the statement holds if $\beta_1=0$. So, assume $\beta_1\neq 0$.
Since $f_1(t)$ is reducible, we can take $a\in \F$ such that $f_1(a)=0$ and from \eqref{b3b4} we obtain that 
$U_4(\rho,\beta_1,\beta_2)$ is conjugate to $U_4\left(\rho,0,\frac{f_2(a)}{g_1(a)}\right)$
provided that $g_1(a)g_2(a)\neq 0$.
Suppose that $f_1(a)=g_1(a)g_2(a)=0$. Observe that $f_1(a)=g_1(a)=0$ implies $g_2(a)=0$, since 
$f_1(t)=g_1(t)+(\beta_1-1)g_2(t)$.
So, we may suppose that any root of $f_1(t)$ is also a root of $g_2(t)$.
Clearly, this implies that $\rho=-\omega^2$ for some $\omega\in \F$.
Take $a=a_i=(-1)^i\omega$, $i=0,1$.
If $f_1(a_i)=0$ for some $i$, then $\beta_2=(-1)^{i+1}\omega (\beta_1+1)$.
In this case, $f_1(t)=(t-a_i)(\beta_1 t-(-1)^i\omega)$ implies that
$\beta_1  (-1)^j\omega-(-1)^i\omega=0$ for some $j=0,1$, whence 
$\beta_1=-1$ and $(i,j)\in \{(0,1),(1,0)\}$. It follows that $\beta_2=0$, contradicting our assumption 
$(\beta_1,\beta_2)\neq (-1,0)$.
\end{proof}

In the following, given a subset $S$ of $\F^\ast$, we denote by $-S$ and $S^{-1}$ the sets $\{-s: s \in S\}$ and $\{s^{-1}: s \in S\}$, respectively.

\begin{cor}\label{corac}
Let $\F$ be a field with no quadratic extensions. 
\begin{itemize}
 \item[(a)] If $\char \F\neq 2$, then $\CC_1=\{(-1,0),(0,0)\}\cup \{(0,\lambda): \lambda \in \N(\F)\}$, where 
$\N(\F)$ is a fixed subset of $\F^\ast$ such that  $\F^\ast=\N(\F)\sqcup -\N(\F)$.
 \item[(b)] If $\char \F=2$, then $\CC_1=\{(0,\lambda): \lambda \in \F\}$.
\end{itemize}
\end{cor}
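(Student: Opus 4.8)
The plan is to apply Lemma \ref{f1} and the relation $\rel_\rho$ directly, exploiting the hypothesis that $\F$ has no quadratic extensions (so $\F^\square=\{1\}$, forcing $\rho=1$). First I would note that, by Remark \ref{rem}(a), the pair $(-1,0)$ always lies in $\CC_1$ and is not $\rel_1$-equivalent to any other pair; this handles that representative once and for all and lets me restrict attention to pairs $(\beta_1,\beta_2)\neq(-1,0)$. For such a pair, since every quadratic in $\F[t]$ splits, the polynomial $f_1(t)=\beta_1 t^2+\beta_2 t-1$ is reducible whenever $\beta_1\neq 0$, so Lemma \ref{f1} shows $U_4(1,\beta_1,\beta_2)$ is conjugate to $U_4(1,0,\lambda)$ for some $\lambda\in\F$. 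Thus every class has a representative with first coordinate $0$, and it remains to determine when $U_4(1,0,\lambda_1)$ and $U_4(1,0,\lambda_2)$ are conjugate, i.e. when $(0,\lambda_1)\rel_1(0,\lambda_2)$.

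For this I would specialize the defining relation \eqref{b3b4} to $\rho=1$, $\beta_1=0$, $\beta_2=\lambda$. The condition $\beta_3=0$ becomes $\lambda t^2 - 1 = 0$ (up to the sign/trivial case $(\beta_3,\beta_4)=(0,\pm\lambda)$), and then $\beta_4=\pm\frac{\lambda t^2 - 2t - \lambda}{t^2-\lambda\cdot 0}=\pm\frac{\lambda t^2-2t-\lambda}{t^2}$ must be simplified using $\lambda t^2=1$. Carrying out that substitution (a short computation) gives $\beta_4=\pm(\text{something})$, and one reads off exactly which $\lambda_2$ are reachable from $\lambda_1$. I expect the outcome to be: in characteristic $\neq 2$, the only identifications among the pairs $(0,\lambda)$, $\lambda\neq 0$, are $\lambda\leftrightarrow-\lambda$ (from the ubiquitous sign ambiguity, cf.\ Remark \ref{rem}(b)), while $(0,0)$ is its own class; hence a transversal is $\{(-1,0),(0,0)\}\cup\{(0,\lambda):\lambda\in\N(\F)\}$ with $\F^\ast=\N(\F)\sqcup-\N(\F)$. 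In characteristic $2$ the sign ambiguity is vacuous, $-1=1$ so $(-1,0)=(0,0)$ is excluded from $\CCC(\F)$ anyway, and no nontrivial $t$ produces a new relation among the $(0,\lambda)$, so $\CC_1=\{(0,\lambda):\lambda\in\F\}$.

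The main obstacle is the characteristic-$2$ bookkeeping and making sure no spurious identification is missed: one must check that the substitution $\lambda t^2=1$ is legitimate (it requires $t\neq 0$, which is automatic, and $\lambda\neq 0$, i.e.\ the case $\lambda=0$ must be treated separately, where $f_1$ has the double root $t=0$ and the side conditions $(t^2+\rho)(t^2+\beta_2 t-\rho\beta_1)\neq 0$ in \eqref{b3b4} fail, so $(0,0)$ genuinely sits alone), and that the constraint $(t^2+1)(t^2+\lambda t)\neq 0$ in \eqref{b3b4} does not secretly exclude the very $t$ one wants to use. Once those degenerate $t$-values are shown to be harmless or already accounted for by the $\pm$ clause, the classification follows. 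I would also double-check the two claimed partitions of $\F^\ast$ are consistent with $\N(\F)$ being a genuine transversal for $\{\pm1\}$ acting on $\F^\ast$, which is immediate since $\char\F\neq 2$ means $1\neq-1$.
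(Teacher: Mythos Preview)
Your approach is essentially the paper's: invoke Remark~\ref{rem}(a) for $(-1,0)$, use Lemma~\ref{f1} (every quadratic over $\F$ splits, so $f_1$ is reducible) to reduce to first coordinate $0$, and then sort out the pairs $(0,\lambda)$. The paper's proof is literally one line, ``It follows from Lemma~\ref{f1} and Remark~\ref{rem},'' because Remark~\ref{rem}(b) is already an \emph{if and only if}: $U_4(\rho,0,\lambda_1)$ and $U_4(\rho,0,\lambda_2)$ are conjugate precisely when $\lambda_2=\pm\lambda_1$. Your plan to re-derive this by plugging $\beta_1=0$, $\beta_2=\lambda$, $\rho=1$ into \eqref{b3b4} is therefore unnecessary (though it does work); just cite Remark~\ref{rem}(b) and you are done in both characteristics.

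Two small slips to fix if you do insist on the direct computation: with $\beta_1=0$, $\beta_2=\lambda$, $\rho=1$ the numerator of $\beta_3$ in \eqref{b3b4} is $\lambda t-1$, not $\lambda t^2-1$; and in characteristic $2$ one has $(-1,0)=(1,0)$ (not $(0,0)$), which is excluded from $\CCC(\F)$ because the first coordinate must avoid $1$.
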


\begin{proof}
It follows from Lemma \ref{f1} and Remark \ref{rem}.
\end{proof}

\begin{proposition}\label{non_ac}
Let $\F=\overline\F$ be an algebraically closed field.
The distinct conjugacy classes of nonabelian regular subgroups in $\D$ can be represented by 
$$U_2(0,1,0,1,1),\quad U_2(0,1,0,0,1),\quad U_3(0,1,0),\quad U_3(0,1,1),$$ 
$$U_3(1,\lambda,0)\;(\lambda\in \F\setminus\{1\}),\quad \RR_D\;(D\in \Pi_A(\F))$$
and
$$\left\{\begin{array}{ll}
U_4(1,-1,0),\quad U_4(1,0,0),\quad U_4(1,0,\beta)\; (\beta \in \N(\F)) & \textrm{ if } \char \F\neq 2,\\
U_4(1,0,\beta)\;(\beta\in \F),\quad U_5(1,1,0)& \textrm{ if } \char \F=2,
\end{array}\right.$$
where, if $\char \F\neq 2$,
\begin{eqnarray*}
\Pi_A(\F) &= & \{E_{2,3},\; E_{1,2}+E_{2,3},\; E_{1,3}+E_{2,2}+E_{3,1}+E_{3,2},\; 
E_{3,2}+E_{3,3}-E_{2,3},\\
&&E_{1,1}+E_{2,3},\;  E_{1,1}+E_{3,2}+E_{3,3}-E_{2,3},\;E_{1,1}+E_{2,3}-E_{3,2},\\
&&E_{2,3}-E_{3,2},\; E_{2,3}+\mu E_{3,2},\; E_{1,1}+ E_{2,3}+\mu E_{3,2}: \mu \in \M(\F) \}, 
\end{eqnarray*}
and $\M(\F)$ and $\N(\F)$ are subsets of $\F^\ast$ such that
$\F\setminus \{0,\pm 1 \}=\M(\F)\sqcup \M(\F)^{-1}$
and $\F^\ast=\N(\F)\sqcup -\N(\F)$;
if $\char \F=2$,
\begin{eqnarray*}
\Pi_A(\F) &= & \{E_{2,3},\; E_{1,1}+E_{2,3},\; E_{1,2}+E_{2,3},\;E_{1,3}+E_{2,2}+E_{3,1}+E_{3,2},\\
&&E_{2,3}+\mu E_{3,2}:\mu \in \M(\F)\}
\end{eqnarray*}
and $\M(\F)$ is a  subset of $\F^\ast$ such that
$\F\setminus \{0,1 \}=\M(\F)\sqcup \M(\F)^{-1}$.
\end{proposition}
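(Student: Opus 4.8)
The plan is to read off the statement from Theorem~\ref{main_nonab} once every parameter in Table~\ref{tab2} is made explicit for $\F=\overline\F$; the only substantial new point is the description of $\Pi_A(\F)$.

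\emph{Specialising the $U$-families.} Since $\F$ is algebraically closed, $\F^\square=\{1\}$, so $\rho$ is forced to be $1$ and $U_3(1,\lambda,0)$ runs over $\lambda\in\F\setminus\{1\}$ (which absorbs the entry $U_3(1,0,0)$ of Table~\ref{tab3}). If $\char\F=2$ then $\G(\F)=\{x^2+x:x\in\F\}=\F$, hence $\H(\F)=\{0\}$ and the only surviving $U_5$-subgroup is $U_5(1,1,0)$; if $\char\F\neq2$ no $U_5$-subgroup occurs. The subgroups $U_2(0,1,0,1,1)$, $U_2(0,1,0,0,1)$, $U_3(0,1,0)$ and $U_3(0,1,1)$ remain as they are, and by Corollary~\ref{cortab} none of the subgroups obtained so far is conjugate to any $\RR_A$. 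For the family $U_4(1,\beta_1,\beta_2)$ I would invoke Corollary~\ref{corac} (applicable since $\overline\F$ has no quadratic extensions) together with Remark~\ref{rem}: this yields $\CC_1=\{(-1,0),(0,0)\}\cup\{(0,\lambda):\lambda\in\N(\F)\}$ if $\char\F\neq2$ and $\CC_1=\{(0,\lambda):\lambda\in\F\}$ if $\char\F=2$, giving precisely the $U_4$-representatives in the statement.

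\emph{Computing $\Pi_A(\F)$.} This is the core of the argument: by Lemma~\ref{RD} one must exhibit a transversal for the projective congruence classes of the non-symmetric matrices in $\Mat_3(\F)$. Assume first $\char\F\neq2$ and write $A=S+K$ with $S=\tfrac12(A+A^{\T})$ symmetric and $K=\tfrac12(A-A^{\T})$ alternating; congruence and scalar multiplication preserve this splitting, and $A$ is non-symmetric exactly when $K\neq0$, i.e.\ $\rk K=2$. Normalising $K$ by congruence to a fixed rank-$2$ alternating form $K_0$ with radical $\langle v_0\rangle$, the projective congruences fixing $K_0$ form a group $G$ of block-triangular matrices with an invertible $2\times2$ block $Q$, an arbitrary $2\times1$ block, and a nonzero scalar $c$ (in a basis adapted to $v_0$), acting on $S$ by $S\mapsto(\det Q)^{-1}PSP^{\T}$. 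One then argues: (i) the condition $v_0^{\T}Sv_0=0$ is $G$-invariant; (ii) using the $v_0$-component of $S$ one clears the mixed block, and is reduced to the congruence class of a symmetric $2\times2$ matrix $S_0$ under $S_0\mapsto(\det Q)^{-1}QS_0Q^{\T}$; since the scalar is tied to $\det Q$, the quantity $\det S_0$ is an honest invariant, and the residual modulus is the eigenvalue $\mu$ of the cosquare of the restriction of $A$ to a complement of $\langle v_0\rangle$ — equivalently a root of the symmetric binary form $\det(xA+yA^{\T})$ — well defined only up to $\mu\mapsto\mu^{-1}$. The degenerate values $\mu=0$, $\mu=-1$ and the loci where $S_0$ drops rank account for the finitely many non-parametric entries, and running through all possibilities one recovers exactly the list $\Pi_A(\F)$, with $\M(\F)$ a transversal for $\mu\sim\mu^{-1}$ on $\F\setminus\{0,\pm1\}$ and $\N(\F)$ a transversal for $\pm1$ on $\F^{\ast}$. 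When $\char\F=2$ the splitting $A=S+K$ is not available; instead one normalises the alternating part $A+A^{\T}$ and then carries out the additional "diagonal" reduction exactly as in \cite{W1} for symmetric forms, which produces the shorter list (the value $\mu=-1$ no longer giving a separate case because $-1=1$). Finally, pairwise non-conjugacy of the listed representatives is checked via the invariants $\rk A$, $\rk(A+A^{\T})$ and the cosquare eigenvalue.

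\emph{Main obstacle.} The delicate step is exactly this last one: isolating the continuous invariant $\mu$, proving that $\mu\mapsto\mu^{-1}$ is the only identification on it, and handling the characteristic-$2$ reduction cleanly; the bookkeeping of the degenerate strata (small rank of $A$, of $A+A^{\T}$, or of $S_0$) is routine but must be done exhaustively. A possible shortcut is to appeal to a general canonical-form theorem for matrix congruence (of the type already used to obtain $\Pi_S$), restrict to the $3\times3$ non-symmetric canonical blocks, and quotient by scalar multiplication: since multiplying $A$ by a scalar leaves the cosquare unchanged, this only collapses congruence classes that are already identified, and so immediately yields $\Pi_A(\F)$.
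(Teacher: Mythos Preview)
Your treatment of the $U$-families is exactly what the paper does: specialise $\F^\square$, $\H(\F)$ and $\CC_\rho$ via Corollary~\ref{corac}, and read the list off Theorem~\ref{main_nonab}. Where you diverge is in the computation of $\Pi_A(\F)$. The paper does not carry out any reduction by hand; it simply invokes the canonical-form theorem of Horn and Sergeichuk \cite[Theorem~2.1]{HS} (the same reference used for $\Pi_S$ in Corollary~\ref{ab_ac}), lists the indecomposable congruence blocks of size at most~$3$, and then picks out the direct sums that are asymmetric, noting the minor adjustments for $\char\F=2$. In other words, the ``possible shortcut'' you mention in your last paragraph \emph{is} the paper's proof.

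Your direct approach --- split $A=S+K$, normalise the alternating part, and extract the cosquare eigenvalue $\mu$ modulo $\mu\leftrightarrow\mu^{-1}$ --- is a legitimate and more self-contained route, and it explains conceptually why the parameter set $\M(\F)$ appears. The cost is precisely what you flag as the ``main obstacle'': the stratification by $\rk A$, $\rk(A+A^{\T})$ and $\rk S_0$ must be walked through case by case, and in characteristic~$2$ you lose the splitting and have to redo the normalisation from scratch. None of this is wrong, but as written it is a programme rather than a proof; to make it complete you would have to actually produce the finite list and verify pairwise non-congruence, which is exactly the content of \cite[Theorem~2.1]{HS}. Citing that result, as the paper does, buys you all of this for free.
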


\begin{proof}
By Theorem \ref{main_nonab} and Corollary \ref{corac} we are left to determine the set $\Pi_A(\F)$.
Applying  \cite[Theorem 2.1]{HS}, the elements of $\Pi_A(\F)$ are the asymmetric matrices obtained as diagonal 
sum of the following matrices:
$$
\begin{pmatrix}  0\end{pmatrix}, 
\begin{pmatrix} 1 \end{pmatrix},
\begin{pmatrix}  0 & 1 \\ 0 & 0\end{pmatrix}, 
\begin{pmatrix}  0 & 1 \\ \mu & 0\end{pmatrix}, 
\begin{pmatrix}  0 & -1 \\ 1 & 1\end{pmatrix}, 
\begin{pmatrix}  0 & 1 & 0 \\ 0 & 0 &1 \\ 0 & 0 & 0\end{pmatrix}, 
\begin{pmatrix}  0 & 0 & 1 \\ 0 & 1 &0 \\ 1& 1 & 0\end{pmatrix},
$$
where $\mu\in\M(\F)$. If $\char \F\neq 2$ we also allow $\mu=-1$; if $\char \F=2$ we must omit the matrices
$$\diag\left(1 ,\begin{pmatrix}  0 & 1 \\ \mu & 0\end{pmatrix} \right).$$
\end{proof}

\begin{lemma}\label{q3}
Let $\rho=1$ and $\char \F\neq 2$.
Suppose that there exists an element $\iota \in\F$ such that $\iota^2=-1$. Then
$$\CC_1=\{(-1,0),(0,0)\}\cup \{(\lambda+1,2\iota): \lambda \in \F^\ast\}.$$
\end{lemma}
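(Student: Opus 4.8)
The plan is to describe, up to the equivalence relation $\rel_1$, all elements of $\CCC(\F) = (\F\setminus\{1\})\times\F$ when $\rho=1$ and $-1$ is a square in $\F$, and then to check that the proposed set $\{(-1,0),(0,0)\}\cup\{(\lambda+1,2\iota):\lambda\in\F^\ast\}$ is a transversal. The starting point is Lemma \ref{f1}: since $\rho=1$ and, for any pair $(\beta_1,\beta_2)\neq(-1,0)$ with $\beta_1\neq 1$, the polynomial $f_1(t)=\beta_1 t^2+\beta_2 t-1$ is always reducible over $\F$ (its discriminant $\beta_2^2+4\beta_1$ lies in $\F$, and $\F$ has no quadratic extensions, so every quadratic splits), every $U_4(1,\beta_1,\beta_2)$ with $(\beta_1,\beta_2)\neq(-1,0)$ is conjugate to some $U_4(1,0,\lambda)$. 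Hence the only classes to sort out are $(-1,0)$ — which by Remark \ref{rem}(a) is alone in its class — together with the classes among the pairs $(0,\lambda)$, $\lambda\in\F$.

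Next I would analyze when two pairs $(0,\lambda)$ and $(0,\mu)$ are $\rel_1$-equivalent. Specializing \eqref{b3b4} with $\beta_1=0$, $\beta_2=\lambda$, $\rho=1$: one has $g_1(t)=t^2+\lambda t$, $f_1(t)=\lambda t-1$, $f_2(t)=\lambda t^2-2\lambda t-\lambda=\lambda(t^2-2t-1)$, and $g_2(t)=t^2+1$. For the image to again have first coordinate $0$ we need $f_1(t)=0$, i.e. $t=1/\lambda$ (so $\lambda\neq 0$); substituting gives $\mu=\pm f_2(1/\lambda)/g_1(1/\lambda)$. A short computation reduces this to $\mu=\pm\dfrac{\lambda^2-2\lambda-1}{1+\lambda}$ (equivalently $\mu=\pm\bigl(\lambda - 3 + \tfrac{2}{1+\lambda}\bigr)$ after polynomial division), valid when $(1/\lambda^2+1)(1/\lambda^2+\lambda\cdot 1/\lambda)\neq 0$, i.e. $\lambda^2\neq -1$ and $\lambda\neq 0,-1$. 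Together with the sign freedom $(0,\lambda)\rel_1(0,-\lambda)$ from the first alternative of $\rel_1$, this generates the full equivalence on $\{(0,\lambda)\}$, and one must also track the degenerate values $\lambda=0$, $\lambda=\pm\iota$, $\lambda=-1$ separately (for $\lambda=-1$ the pair is $(0,-1)$, which by Remark \ref{rem}(d)-type considerations merges with one of the $U_4(1,\beta_1,0)$ classes, ultimately with $(-1,0)$ or $(0,0)$ — this needs care).

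The reindexing step is then to re-coordinate: write a pair in the form $(\lambda+1,2\iota)$ and show directly, via \eqref{b3b4}, that $U_4(1,\lambda+1,2\iota)$ is conjugate to some $U_4(1,0,\mu)$ and conversely, so that the family $\{(\lambda+1,2\iota):\lambda\in\F^\ast\}$ meets every $\rel_1$-class of a $(0,\mu)$ with $\mu\neq 0$ exactly once, while $(0,0)$ and $(-1,0)$ account for the remaining two classes. Concretely, one chooses $t$ in \eqref{b3b4} so that $\beta_3 = \lambda+1$ and $\beta_4 = 2\iota$; solving $\frac{(\lambda+1)t^2 + 2\iota t - 1}{t^2+2\iota t - (\lambda+1)}$-type relations will pin down the correspondence $\lambda\leftrightarrow\mu$, and the sign ambiguity $\pm 2\iota$ is absorbed because $\iota$ and $-\iota$ are interchangeable only through a genuine change of the class, which is exactly what makes each $\lambda\in\F^\ast$ give a distinct class. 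I would finish by verifying that distinct $\lambda,\lambda'\in\F^\ast$ give non-$\rel_1$-equivalent pairs $(\lambda+1,2\iota)$, $(\lambda'+1,2\iota)$, and that none of them is equivalent to $(-1,0)$ or $(0,0)$.

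The main obstacle I anticipate is the bookkeeping around the degenerate cases where the non-degeneracy condition $(t^2+\rho)(t^2+\beta_2 t-\rho\beta_1)\neq 0$ in \eqref{b3b4} fails — precisely the values tied to $\iota^2=-1$ — since it is there that the generic formula breaks and one must argue by hand (as in the proof of Lemma \ref{f1}) that no extra collapses or splittings occur. The algebraic identities themselves are routine, but getting the transversal to come out as exactly $\{(-1,0),(0,0)\}\cup\{(\lambda+1,2\iota):\lambda\in\F^\ast\}$ — rather than over- or under-counting by one class at $\lambda=-1$ — is where the argument has to be watertight.
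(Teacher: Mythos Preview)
Your very first reduction step is incorrect, and the rest of the plan depends on it. You write that for any $(\beta_1,\beta_2)\neq(-1,0)$ the polynomial $f_1(t)=\beta_1 t^2+\beta_2 t-1$ ``is always reducible over $\F$ (its discriminant $\beta_2^2+4\beta_1$ lies in $\F$, and $\F$ has no quadratic extensions, so every quadratic splits)''. But the hypothesis of the lemma is only that $-1$ is a square in $\F$, not that every element of $\F$ is a square. The lemma is precisely intended to apply to fields such as $\F_q$ with $q\equiv 1\pmod 4$ (compare Lemma~\ref{repr}(b)), where $|\F^\square|=2$ and plenty of quadratics are irreducible. So Lemma~\ref{f1} cannot be invoked wholesale, and it is simply false that every class in $\CCC(\F)$ other than $(-1,0)$ contains a representative of the form $(0,\lambda)$.

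The paper's proof bypasses this by aiming directly for the second coordinate $2\iota$: it sets $h_1(t)=f_2(t)-2\iota\,g_1(t)$ and observes that, thanks to $\iota^2=-1$, the discriminant of $h_1$ collapses to $4(\beta_1-1)^2$, a perfect square regardless of the arithmetic of $\F$. Hence $h_1$ always has a root $a\in\F$, and substituting into \eqref{b3b4} lands one on a pair with $\beta_4=2\iota$ whenever the non-degeneracy condition $g_1(a)g_2(a)\neq 0$ holds. The degenerate locus is then identified as $(\beta_1+\iota\beta_2+1)(\beta_2^2+4\beta_1)=0$, and each factor is handled separately (the second leading to $(0,0)$, the first via the companion polynomial $h_2(t)=f_2(t)+2\iota\,g_1(t)$). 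That is the idea you are missing: it is the specific choice $\beta_4=2\iota$ that makes the relevant quadratic split for free, not any global assumption on $\F$.
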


\begin{proof}
We show that every subgroup $U_4(1,\beta_1,\beta_2)$, $\beta_1\neq 1$, is conjugate to a unique subgroup $U_4(1,\beta_3,\beta_4)$ with $(\beta_3,\beta_4)\in \CC_1$ as in the statement. This clearly holds if $\beta_4 = \pm 2\iota$, so suppose $\beta_4\neq \pm 2\iota$.
Keeping the notation of \eqref{f12}, let $h_1(t)=f_2(t)-2\iota g_1(t)\in \F[t]$.
Then, $a=\frac{2\beta_1+\iota \beta_2}{\beta_2-2\iota}$ is a root of $h_1(t)$. It follows that $R$ is conjugate to $U_4(1,\lambda+1,2\iota)$ for some $\lambda \in \F^\ast$, provided that $g_1(a)g_2(a)\neq 0$.
Notice that $g_1(a)g_2(a)=0$ if and only if $(\beta_1+\iota \beta_2+1)(\beta_2^2+4\beta_1)=0$.

Suppose that $\beta_2^2+4\beta_1=0$. If  $\beta_2\neq 0$, take $b=2\beta_2^{-1}$. In this case,
$f_1(b)=f_2(b)=0$, $g_1(b)=\frac{(\beta_2^2+4)^2}{4\beta_2^2}\neq 0$ and $g_2(b)=\frac{\beta_2^2+4}{\beta_2^2}\neq 0$.
It follows that $R$ is conjugate to $U_4(1,0,0)$ (this clearly holds even if $\beta_2=0$).
Now, suppose that $\beta_1+\iota \beta_2+1=0$ and let $h_2(t)=f_2(t)+2\iota g_1(t)$. 
Take $c=-\frac{3\iota \beta_2+2}{\beta_2+2\iota}$.
Then $h_2(c)=0$, $g_1(c)=\frac{-2\iota \beta_2(\beta_2-2\iota)^2)}{(\beta_2+2\iota)^2}\neq 0$ and $g_2(c)=\frac{-8\beta_2(\beta_2-2\iota)}{(\beta_2+2\iota)^2}\neq 0$.
It follows that $R$ is conjugate to $U_4(1,-3,-2\iota)$ and then to $U_4(1,-3,2\iota)$, by Remark \ref{rem}.

Finally, it is an easy computation to verify that $U_4(1,\lambda_1+1,2\iota)$ is conjugate to $U_4(1,\lambda_2+1,2\iota)$ if and only if $\lambda_1=\lambda_2$.
\end{proof}

Clearly the previous lemma gives an alternative set $\CC_1$ for algebraically closed field of characteristic $\neq 2$.
In a very similar way we can prove the following.

\begin{lemma}\label{q1}
Let $\rho=-1$, $\char \F\neq 2$ and suppose that $-1\not \in (\F)^2$. Then
$$\CC_{-1}=\{(-1,0),(0,0)\}\cup \{(\lambda+1,2): \lambda \in \F^\ast\}.$$
\end{lemma}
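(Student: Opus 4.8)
The plan is to follow the proof of Lemma \ref{q3} step by step, with $\rho=-1$ in place of $\rho=1$ and with the constant $2$ in place of $2\iota$; the hypothesis $-1\notin(\F)^2$ enters only to guarantee that $-1\in\F^\square$, so that the family $U_4(-1,\cdot,\cdot)$ and the set $\CC_{-1}$ are meaningful. As there, it suffices to show that every $U_4(-1,\beta_1,\beta_2)$ with $\beta_1\neq 1$ is conjugate to exactly one $U_4(-1,\beta_3,\beta_4)$ with $(\beta_3,\beta_4)$ in the set of the statement. If $\beta_2=\pm 2$ this is immediate from the first clause of $\rel_{-1}$, since then $U_4(-1,\beta_1,\beta_2)$ is conjugate to $U_4(-1,\beta_1,2)=U_4(-1,\lambda+1,2)$ with $\lambda=\beta_1-1\in\F^\ast$; so I would assume $\beta_2\neq\pm 2$ from now on.

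Keeping the notation \eqref{f12} with $\rho=-1$, I would search for $t$ with $f_2(t)/g_1(t)=2$, i.e.\ a root of $h_1(t):=f_2(t)-2g_1(t)=(\beta_2-2)t^2+2(\beta_1-\beta_2+1)t+(\beta_2-2\beta_1)$. A short computation shows the discriminant of $h_1$ is $4(\beta_1-1)^2$, so $h_1$ has the two distinct roots $t=1$ and $a:=\tfrac{\beta_2-2\beta_1}{\beta_2-2}$ (distinctness being exactly $\beta_1\neq 1$). Since $g_2(1)=0$ we must use $a$; by \eqref{b3b4} the subgroup $U_4(-1,\beta_1,\beta_2)$ is then conjugate to $U_4(-1,f_1(a)/g_1(a),2)$ whenever $g_1(a)g_2(a)\neq 0$, and $f_1(a)/g_1(a)\neq 1$ because $f_1=g_1+(\beta_1-1)g_2$ with $\beta_1\neq 1$. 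A direct computation gives $g_2(a)=0$ precisely when $\beta_2-\beta_1-1=0$ and shows that $g_1(a)$ equals $(\beta_2-\beta_1-1)(\beta_2^2-4\beta_1)$ up to a nonzero scalar; hence the only two obstructions are $\beta_2^2-4\beta_1=0$ and $\beta_2-\beta_1-1=0$, and these cannot both hold when $\beta_1\neq 1$.

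In the first exceptional case $\beta_2^2=4\beta_1$ one has $f_1(t)=(\tfrac{\beta_2}{2}t+1)^2$, so taking $b=-2/\beta_2$ (or noting directly that $(\beta_1,\beta_2)=(0,0)$ when $\beta_2=0$) yields $f_1(b)=f_2(b)=0$ and $g_1(b)g_2(b)\neq 0$, whence $U_4(-1,\beta_1,\beta_2)$ is conjugate to $U_4(-1,0,0)$. In the second exceptional case $\beta_2=\beta_1+1$ I would use the opposite-sign device: the polynomial $h_2(t):=f_2(t)+2g_1(t)$ has discriminant $4(\beta_2-2)^2$ and roots $t=-1$ and $c:=\tfrac{2-3\beta_2}{\beta_2+2}$; when $\beta_2\neq 0$ one checks $g_1(c)g_2(c)\neq 0$, so $U_4(-1,\beta_1,\beta_2)$ is conjugate to $U_4(-1,f_1(c)/g_1(c),2)$ (again with $f_1(c)/g_1(c)\neq 1$), while when $\beta_2=0$ we are simply looking at the leftover subgroup $U_4(-1,-1,0)$. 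This settles existence; uniqueness is then a routine verification exactly as in Lemma \ref{q3}, using Remark \ref{rem}(a) to separate $U_4(-1,-1,0)$ from the rest, Remark \ref{rem}(c),(d) together with the conjugacy criterion in the proof of Lemma \ref{U4} to separate $U_4(-1,0,0)$ from every $U_4(-1,\mu,2)$, and a short computation to see that $U_4(-1,\lambda_1+1,2)$ and $U_4(-1,\lambda_2+1,2)$ are conjugate only if $\lambda_1=\lambda_2$. The only real difficulty, as in Lemma \ref{q3}, lies in the bookkeeping of the exceptional loci $g_1(a)g_2(a)=0$: one must pin down the vanishing factors, check their mutual exclusivity under $\beta_1\neq 1$, and provide in each case a substitute root (of $h_1$ via $b$, of $h_2$ via $c$) avoiding the bad set — and it is exactly the fact that $h_1$ and $h_2$ have square discriminants $4(\beta_1-1)^2$ and $4(\beta_2-2)^2$ that makes these substitute roots available in $\F$ with no field extension.
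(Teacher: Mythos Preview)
Your proof is correct and follows precisely the template of Lemma~\ref{q3}, which is exactly what the paper intends: the paper gives no separate proof of Lemma~\ref{q1} but simply states that it is proved ``in a very similar way''. One cosmetic remark: the discriminant of $h_2$ is $4(\beta_1-1)^2$ in general and only equals $4(\beta_2-2)^2$ under your case assumption $\beta_2=\beta_1+1$; this is clear from context but worth saying explicitly.
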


\begin{lemma}\label{f2}
Suppose that $\char \F\neq 2$, $\beta_1\neq 0,1$, $\beta_2\neq 0$ and $(\beta_1,\beta_2)\neq (-1,0)$.
If $f_1(t)$ is irreducible and $f_2(t)$ is reducible, then $U_4(\rho,\beta_1,\beta_2)$ is conjugate to 
$U_4(\rho,\lambda,0)$ for some $\lambda \in \F\setminus\{0,1\}$.
\end{lemma}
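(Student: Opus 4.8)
The plan is to realize the desired conjugation through the explicit substitution in \eqref{b3b4}, choosing the parameter $t$ to be a root of $f_2$ so that the second entry $\beta_4$ collapses to $0$. Since $\char\F\neq 2$ and $\beta_2\neq 0$, the polynomial $f_2(t)$ in \eqref{f12} is a genuine quadratic, and being reducible it has a root $a\in\F$. Provided $g_1(a)g_2(a)\neq 0$, formula \eqref{b3b4} with the sign $+$ then gives that $U_4(\rho,\beta_1,\beta_2)$ is conjugate to $U_4(\rho,\lambda,0)$ with $\lambda=f_1(a)/g_1(a)$, and it only remains to check that $\lambda\notin\{0,1\}$.

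The first key observation is that $f_1(t)=\beta_1t^2+\beta_2t-\rho$ and $g_1(t)=t^2+\beta_2t-\rho\beta_1$ have the \emph{same} discriminant $\beta_2^2+4\rho\beta_1$ (here $\beta_1\neq 0$ is used so that $f_1$ is quadratic); hence the hypothesis that $f_1$ is irreducible forces $g_1$ to be irreducible as well, so $f_1(a)\neq 0$ and $g_1(a)\neq 0$ for \emph{every} $a\in\F$. In particular $\lambda=f_1(a)/g_1(a)$ is well defined and nonzero, and from the identity $f_1=g_1+(\beta_1-1)g_2$ together with $\beta_1\neq 1$ one sees that $\lambda=1$ would force $g_2(a)=0$. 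Thus the whole statement reduces to producing a root $a$ of $f_2$ with $g_2(a)\neq 0$.

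This last point is the only real obstacle, and I would dispose of it by contradiction: suppose $a\in\F$ is a root of $f_2$ with $g_2(a)=0$, i.e. $a^2=-\rho$. Substituting into $f_2(a)=0$ and dividing by $-2\rho$ yields $\beta_2+(\beta_1+1)a=0$; if $\beta_1=-1$ this already contradicts $\beta_2\neq 0$, and otherwise $a=-\beta_2/(\beta_1+1)$, which combined with $a^2=-\rho$ gives $\beta_2^2=-\rho(\beta_1+1)^2$. Feeding these two relations into $g_1(a)=a^2+\beta_2a-\rho\beta_1$, a short cancellation produces $g_1(a)=0$, contradicting the irreducibility of $g_1$ proved above. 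Hence no root of $f_2$ annihilates $g_2$, so any root $a$ of $f_2$ works and the lemma follows; the only arithmetic that needs to be written out with care is this final cancellation in $g_1(a)$, which is routine.
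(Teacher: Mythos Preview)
Your proof is correct and follows the same overall plan as the paper's: pick a root $a$ of $f_2$, apply \eqref{b3b4} to reach $U_4(\rho,f_1(a)/g_1(a),0)$, and then verify the side conditions $g_1(a)g_2(a)\neq 0$. The execution differs in one pleasant way. The paper treats the two factors separately: for $g_1$ it assumes every root of $f_2$ is a root of $g_1$, computes the resultant, and eventually reduces to the non-squareness of $\beta_2^2+4\rho\beta_1$. You instead observe directly that $f_1$ and $g_1$ share this very discriminant, so the irreducibility of $f_1$ forces that of $g_1$ and hence $g_1(a)\neq 0$ for \emph{every} $a\in\F$; this bypasses the resultant entirely. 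For the $g_2$ part the two arguments are essentially interchangeable: the paper concludes $f_1(a)=0$ while you conclude $g_1(a)=0$, and these coincide at any root of $g_2$ precisely by the identity $f_1=g_1+(\beta_1-1)g_2$ you wrote down. You also spell out that the resulting $\lambda$ avoids $\{0,1\}$, which the paper leaves to the reader.
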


\begin{proof}
Since $f_2(t)$ is reducible, we can take $a\in \F$ such that $f_2(a)=0$ and from \eqref{b3b4} we obtain that 
$U_4(\rho,\beta_1,\beta_2)$ is conjugate to $U_4\left(\rho,\frac{f_1(a)}{g_1(a)},0\right)$
provided that $g_1(a)g_2(a)\neq 0$.
So, first, suppose that  $f_2(a)=g_2(a)=0$ for some $a\in \F$. As in  Lemma \ref{f1}, this implies that 
$\rho=-\omega^2$ for some $\omega\in \F^\ast$ and  $a=a_i=(-1)^i\omega$ for some $i=0,1$.
In particular, we get $\beta_2=(-1)^{i+1}\omega(\beta_1+1)$ (whence $\beta_1\neq -1$).
In this case, we obtain that $f_1(a_i)=0$, an absurd, since $f_1(t)$ is irreducible.
Next, suppose that any root of $f_2(t)$ is a root of $g_1(t)=0$ (but not a root of $g_2(t)$).
Let $r$ be the resultant between $f_2(t)$ and $g_1(t)$ with respect to $t$. From $r=0$ we get
$(\beta_2^2+4\rho\beta_1)(\rho(\beta_1+1)^2 + \beta_2^2)=0$.
Since $f_1(t)$ is irreducible, this implies $\rho(\beta_1+1)^2 +\beta_2^2=0$ and so
$\rho=-\omega^2$ for some $\omega \in \F^\ast$ and $\beta_2=\pm \omega(\beta_1+1)$.
As before, this condition implies that $f_1(t)$ is reducible, a contradiction. 
\end{proof}

\begin{cor}\label{R1}
Let $\F=\R$. Then 
$$\CC_1=\{(0,\lambda): \lambda \in [0,+\infty)\}\cup \{(\lambda,0): \lambda \in [-1,0)\}.$$
\end{cor}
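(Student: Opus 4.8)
The plan is to show that $T:=\{(0,\lambda):\lambda\in[0,+\infty)\}\cup\{(\lambda,0):\lambda\in[-1,0)\}$ is at the same time a complete and an irredundant set of representatives for $\rel_1$ on $\CCC(\R)$; by Lemma \ref{U4} this is exactly the same as producing a transversal for the conjugacy classes of the subgroups $U_4(1,\beta_1,\beta_2)$, so Lemmas \ref{f1} and \ref{f2} and Remark \ref{rem} apply directly. Two elementary facts over $\R$ drive the argument: for $\beta_1\neq 0$ the quadratic $f_1(t)=\beta_1 t^2+\beta_2 t-1$ is reducible exactly when $\beta_2^2+4\beta_1\geq 0$, and for $\beta_2\neq 0$ the quadratic $f_2(t)=\beta_2 t^2-2(\beta_1+1)t-\beta_2$ is always reducible, since its discriminant equals $4\big((\beta_1+1)^2+\beta_2^2\big)>0$.

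For completeness I would split $\CCC(\R)$ into cases. If $\beta_1=0$ or $\beta_2^2+4\beta_1\geq 0$, then $(\beta_1,\beta_2)\neq(-1,0)$ and either $\beta_1=0$ or $f_1$ is reducible, so Lemma \ref{f1} puts $U_4(1,\beta_1,\beta_2)$ in the conjugacy class of some $U_4(1,0,\lambda)$, and Remark \ref{rem}(b) replaces $\lambda$ by $|\lambda|\in[0,+\infty)$. Otherwise $\beta_2^2+4\beta_1<0$, which forces $\beta_1<0$. If moreover $\beta_2=0$, then $(\beta_1,0)$ is already of the second shape and by Remark \ref{rem}(c) it is equivalent to $(\beta_1^{-1},0)$; exactly one of $\beta_1,\beta_1^{-1}$ lies in $[-1,0)$, giving the representative. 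If $\beta_2\neq 0$, then $f_1$ is irreducible while $f_2$ is reducible, so Lemma \ref{f2} puts $U_4(1,\beta_1,\beta_2)$ in the class of some $U_4(1,\lambda,0)$ with $\lambda\notin\{0,1\}$; when $\lambda<0$ we conclude as in the previous case, and when $\lambda>0$ we apply Remark \ref{rem}(d) with $\omega=\sqrt{\lambda}$ to reach some $U_4(1,0,\mu)$ and then Remark \ref{rem}(b) to normalize $\mu$ to $|\mu|\geq 0$.

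For irredundancy I would invoke Remark \ref{rem} three times. If $(0,\lambda_1)\rel_1(0,\lambda_2)$ with $\lambda_1,\lambda_2\geq 0$, then Remark \ref{rem}(b) gives $\lambda_2=\pm\lambda_1$, hence $\lambda_1=\lambda_2$. If $(\mu_1,0)\rel_1(\mu_2,0)$ with $\mu_1,\mu_2\in[-1,0)$, then Remark \ref{rem}(c) gives $\mu_2\in\{\mu_1,\mu_1^{-1}\}$; but $\mu_1\in(-1,0)$ forces $\mu_1^{-1}<-1$, which is outside $[-1,0)$, so the only possibility is $\mu_1=\mu_2$. Finally $(0,\lambda)$ with $\lambda\geq 0$ and $(\mu,0)$ with $\mu\in[-1,0)$ cannot be $\rel_1$-equivalent, since by Remark \ref{rem}(d) this would require $\mu=\omega^2\geq 0$. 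Together with completeness this yields $\CC_1(\R)=T$.

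I expect the only delicate point to be the bookkeeping in the completeness step: checking in each branch that the hypotheses of Lemmas \ref{f1} and \ref{f2} really hold over $\R$ (in particular the reducibility or irreducibility of $f_1$ and $f_2$ via the discriminants above, and that $(\beta_1,\beta_2)\neq(-1,0)$ whenever Lemma \ref{f1} is used), and then tracing the outputs $(\lambda,0)$ or $(0,\lambda)$ of those lemmas back into $T$ through the correct part of Remark \ref{rem}. There is no genuine obstacle here; everything reduces to the elementary real quadratic computations indicated.
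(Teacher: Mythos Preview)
Your argument is correct and follows exactly the route indicated in the paper: the paper's own proof is a single sentence invoking Lemmas~\ref{f1} and~\ref{f2} and Remark~\ref{rem}, together with the observation that $f_2(t)\in\R[t]$ is reducible whenever $\beta_2\neq 0$, and you have simply unpacked that sentence into a careful case analysis. The only cosmetic slip is the phrase ``exactly one of $\beta_1,\beta_1^{-1}$ lies in $[-1,0)$'', which fails when $\beta_1=-1$ (both coincide and lie in $[-1,0)$), but your conclusion in that boundary case is still correct.
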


\begin{proof}
The result follows from Lemmas \ref{f1} and \ref{f2} and Remark \ref{rem}, observing that $f_2(t)\in \R[t]$ is reducible provided that $\beta_2\neq 0$. 
\end{proof}

From Theorem \ref{main_nonab} and \cite[Theorem 2.1]{HS} we obtain the following classification.

\begin{proposition}\label{nonab_real}
The distinct conjugacy classes of nonabelian regular subgroups in $\Delta_4(\R)$ can be represented by:
$$U_2(0,1,0,1,1),\quad U_2(0,1,0,0,1),\quad U_3(0,1,1),\quad U_3(0,1,0),\quad \RR_D\;(D\in \Pi_A(\R)),$$
$$U_3(1,\lambda+1,0)\; (\lambda\in \R^\ast),\quad 
U_4(1,0,\lambda)\;(\lambda \in [0,+\infty)),\quad U_4(1,\lambda,0)\;(\lambda \in [-1,0)),$$
$$U_4(-1,0,0),\;\;U_4(-1,-1,0),\;\;U_4(-1,\lambda+1,2)\;(\lambda \in \R^\ast),$$
where  
\begin{eqnarray*}
\Pi_A(\R) & = & \{E_{1,2}+E_{2,3},\;E_{1,3}+E_{2,2}+E_{3,1}+E_{3,2},\;E_{1,2}\pm E_{3,3},\; E_{1,2}-E_{2,1},\\
&& E_{1,2}-E_{2,1}\pm E_{3,3},\; E_{1,2}+\mu E_{2,1},\;E_{1,2}-E_{2,1}-E_{2,2}\pm E_{3,3},\;E_{1,2},\\
&& E_{1,2}+\mu E_{2,1}\pm E_{3,3},\;\nu E_{1,1}-E_{1,2}+E_{2,1}+\nu E_{2,2},\; E_{1,2}-E_{2,1}-E_{2,2},\\
&&\nu E_{1,1}-E_{1,2}+E_{2,1}+\nu E_{2,2}\pm E_{3,3}: \mu \in (0,1), \nu \in \R^\ast\}.
\end{eqnarray*}
\end{proposition}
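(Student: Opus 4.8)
The plan is to specialize the general classification of Theorem \ref{main_nonab} to $\F=\R$. Since $\char\R\neq 2$ the characteristic-two family $U_5(1,1,\epsilon)$ does not occur, and since $\R^\square=\{1,-1\}$ the $U_4$ subgroups split into the two subfamilies $U_4(1,\beta_1,\beta_2)$ and $U_4(-1,\beta_1,\beta_2)$. The subgroups $U_2(0,1,0,1,1)$, $U_2(0,1,0,0,1)$, $U_3(0,1,0)$ and $U_3(0,1,1)$ are already given explicitly in Table \ref{tab2}, and $U_3(1,\mu,0)$ with $\mu\in\R\setminus\{1\}$ is merely reindexed by $\mu=\lambda+1$, $\lambda\in\R^\ast$. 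So the only substantial work is to make explicit (i) the sets $\CC_\rho(\R)$ for $\rho\in\{1,-1\}$, which govern the $U_4$ classes, and (ii) a transversal $\Pi_A(\R)$ of the projective congruence classes of asymmetric matrices in $\Mat_3(\R)$, which governs the $\RR_D$ classes.

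For (i) I would quote the computations already carried out. Corollary \ref{R1} gives $\CC_1(\R)=\{(0,\lambda):\lambda\in[0,+\infty)\}\cup\{(\lambda,0):\lambda\in[-1,0)\}$, hence the representatives $U_4(1,0,\lambda)$ with $\lambda\geq 0$ and $U_4(1,\lambda,0)$ with $\lambda\in[-1,0)$. Since $-1$ is not a square in $\R$, Lemma \ref{q1} applies with $\rho=-1$ and gives $\CC_{-1}(\R)=\{(-1,0),(0,0)\}\cup\{(\lambda+1,2):\lambda\in\R^\ast\}$, hence $U_4(-1,-1,0)$, $U_4(-1,0,0)$ and $U_4(-1,\lambda+1,2)$ with $\lambda\in\R^\ast$. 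That distinct parameter values give nonconjugate subgroups is exactly the content of Remark \ref{rem}. This accounts for every class in the statement except those of type $\RR_D$.

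For (ii) I would invoke Lemma \ref{RD}, by which the classes $\RR_D$ correspond bijectively to the projective congruence classes of asymmetric $3\times3$ real matrices, together with the observation (used in the proof of Corollary \ref{ab_real}) that over $\R$ two matrices are projectively congruent precisely when one is congruent to $\pm$ the other. Feeding into this the real congruence canonical form of \cite[Theorem 2.1]{HS}, every $A\in\Mat_3(\R)$ is congruent to a direct sum of total size $3$ of real canonical blocks: the $1\times1$ blocks $(0)$, $(1)$, $(-1)$; the $2\times2$ blocks $\left(\begin{smallmatrix}0&1\\0&0\end{smallmatrix}\right)$, $\left(\begin{smallmatrix}0&1\\-1&0\end{smallmatrix}\right)$, $\left(\begin{smallmatrix}0&1\\\mu&0\end{smallmatrix}\right)$, $\left(\begin{smallmatrix}\nu&-1\\1&\nu\end{smallmatrix}\right)$; and the two genuinely $3\times3$ blocks $\left(\begin{smallmatrix}0&1&0\\0&0&1\\0&0&0\end{smallmatrix}\right)$ and $\left(\begin{smallmatrix}0&0&1\\0&1&0\\1&1&0\end{smallmatrix}\right)$ occurring in the displayed $\Pi_A(\R)$. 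I would retain those direct sums that contain at least one non-symmetric summand (these are exactly the asymmetric $A$) and then pass to the coarser relation of projective congruence, keeping track of the permutation symmetry $\mu\mapsto\mu^{-1}$ of the $\left(\begin{smallmatrix}0&1\\\mu&0\end{smallmatrix}\right)$ block, of when an appended $(1)$ and an appended $(-1)$ become equivalent, and of the persistence of the parameter $\nu$ over $\R^\ast$. A finite case analysis of these possibilities, together with a check that no two surviving forms are projectively congruent, reproduces exactly the set $\Pi_A(\R)$ displayed in the statement.

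Thus the reduction from Theorem \ref{main_nonab} and step (i) are essentially transcription, whereas the expected main obstacle is step (ii): the careful enumeration of the size-$3$ real congruence canonical blocks, the determination of which of their combinations yield asymmetric matrices, and the patient reduction of the block parameters modulo projective congruence so as to obtain an exhaustive and irredundant $\Pi_A(\R)$.
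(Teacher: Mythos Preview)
Your approach is correct and matches the paper's: the paper simply says ``From Theorem \ref{main_nonab} and \cite[Theorem 2.1]{HS} we obtain the following classification'' immediately before stating the proposition, relying implicitly on Corollary \ref{R1} and Lemma \ref{q1} for the sets $\CC_1(\R)$ and $\CC_{-1}(\R)$, exactly as you spell out. One small omission in your sketch of step (ii): your list of $2\times2$ real canonical blocks is missing the block $\left(\begin{smallmatrix}0&-1\\1&1\end{smallmatrix}\right)$ (equivalently $E_{1,2}-E_{2,1}-E_{2,2}$ up to sign), which is needed to produce the entries $E_{1,2}-E_{2,1}-E_{2,2}$ and $E_{1,2}-E_{2,1}-E_{2,2}\pm E_{3,3}$ in $\Pi_A(\R)$; since you defer the full enumeration to \cite[Theorem 2.1]{HS} anyway, this does not affect the validity of the argument.
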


\begin{lemma}\label{ii}
Suppose that $\char \F\neq 2$, $\beta_1\neq 0,1$ and $(\beta_1,\beta_2)\neq(-1,0)$. Let $\F^\square=\{1,\xi\}$ and let $\vartheta\in \F$ such that 
$\vartheta^2+1=\sigma^2 \xi$ for some $\sigma\in \F^\ast$.
If $f_1(t)$ and $f_2(t)$ are both irreducible, then $U_4(1,\beta_1,\beta_2)$ and 
$U_4(\xi,\beta_1,\beta_2)$ are conjugate, respective, to $U_4(1,\lambda_1,\vartheta(\lambda_1+1))$ and 
$U_4(\xi,-1,\lambda_2)$ for some $\lambda_1\in \F\setminus\{\pm 1\}$ and $\lambda_2 \in \F^\ast$.
\end{lemma}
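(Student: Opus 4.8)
The plan is to follow the pattern of the earlier lemmas on $\CC_\rho$ (Lemmas \ref{f1} and \ref{f2}), using the explicit conjugation formulas \eqref{b3b4} to move a given $U_4(\rho,\beta_1,\beta_2)$ to a normalized representative, and then verifying the normalization is well defined. Concretely, for $\rho=1$ I would introduce the auxiliary polynomial $h(t)=f_2(t)-\vartheta\, g_1(t)\in\F[t]$, where $f_2,g_1$ are as in \eqref{f12} with $\rho=1$; a direct computation shows $h$ has a rational root $a$ (one checks $h$ factors because its discriminant, after using $\vartheta^2+1=\sigma^2\xi$ and that $f_1$ irreducible forces $\beta_2^2+4\beta_1$ to be a nonsquare, becomes a square — or more directly one exhibits the root explicitly as a rational function of $\beta_1,\beta_2,\vartheta$, mirroring the choice $a=\frac{2\beta_1+\iota\beta_2}{\beta_2-2\iota}$ in Lemma \ref{q3}). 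Plugging $a$ into \eqref{b3b4} yields $\beta_4=\pm\vartheta(\beta_3+1)$ for the appropriate sign, so $U_4(1,\beta_1,\beta_2)$ becomes $U_4(1,\lambda_1,\vartheta(\lambda_1+1))$ for $\lambda_1=\beta_3$. One must then check $\lambda_1\neq\pm1$: $\lambda_1=1$ is excluded since $\CCC(\F)$ only contains first-coordinates $\neq1$ and the transformation \eqref{b3b4} preserves this, while $\lambda_1=-1$ together with $\beta_4=\vartheta(\lambda_1+1)=0$ would force $(\lambda_1,\beta_4)=(-1,0)$, which one rules out by tracing back through \eqref{b3b4}: $(\beta_1,\beta_2)\rel_1(-1,0)$ would make $f_1$ reducible (as in Lemma \ref{f1}), contradicting the hypothesis. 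Throughout one must keep track of the nondegeneracy condition $(t^2+\rho)(t^2+\beta_2 t-\rho\beta_1)\neq0$, i.e. $g_1(a)g_2(a)\neq0$; the argument that this holds is exactly parallel to Lemmas \ref{f1}--\ref{f2}: if $g_1(a)=0$ or $g_2(a)=0$ one derives, using that $f_1$ is irreducible, that $\rho=1$ is minus a square or that $f_1$ is reducible, a contradiction.

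For $\rho=\xi$ the target representative is $U_4(\xi,-1,\lambda_2)$, so here the natural auxiliary polynomial is $f_1(t)+g_2(t)=(\beta_1+1)\,t^2+\beta_2 t$ wait — more precisely one wants to force the first coordinate to become $-1$, so I would look for $a$ with $\beta_3=\frac{f_1(a)}{g_1(a)}=-1$, i.e. a root of $f_1(t)+g_1(t)=(\beta_1+1)t^2+2\beta_2 t-\rho(\beta_1+1)$. If $\beta_1\neq-1$ this quadratic has discriminant $4\beta_2^2+4\rho(\beta_1+1)^2=4(\beta_2^2+\xi(\beta_1+1)^2)$; one shows this is a square in $\F$ precisely because of the irreducibility hypotheses on $f_1$ and $f_2$ (a resultant computation, as in Lemma \ref{f2}, shows that if it were a nonsquare then one of $f_1,f_2$ would be reducible). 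Getting a rational root $a$, \eqref{b3b4} then gives $U_4(\xi,-1,\lambda_2)$ with $\lambda_2=\pm\frac{f_2(a)}{g_1(a)}$, and $\lambda_2\neq0$ because $\lambda_2=0$ would put us at $(-1,0)$, again contradicting the hypotheses by reducibility of $f_1$. The case $\beta_1=-1$ (so $\beta_2\neq0$) is handled separately and directly: here $U_4(\xi,-1,\beta_2)$ is already of the desired form with $\lambda_2=\beta_2\neq0$.

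The remaining work is to confirm these normal forms are genuinely the representatives, i.e. to pin down when two such are conjugate — but this is not needed for the present lemma, which only asserts existence of such $\lambda_1,\lambda_2$; the uniqueness bookkeeping belongs to the subsequent corollary assembling $\CC_1$ and $\CC_\xi$ for finite fields. So the proof reduces to: (i) exhibit the rational root of the auxiliary quadratic, (ii) verify the nondegeneracy $g_1(a)g_2(a)\neq0$, (iii) read off the transformed parameters from \eqref{b3b4}, (iv) exclude the degenerate value $(-1,0)$ using the irreducibility of $f_1$.

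The main obstacle, as in Lemmas \ref{f1} and \ref{f2}, is step (ii) combined with step (i): one has to choose the auxiliary polynomial so that it provably has a root in $\F$ under the stated hypotheses, and then show that no root of it is simultaneously a root of $g_1 g_2$. This is where the hypotheses "$f_1$ irreducible" and "$f_2$ irreducible" get used, via discriminant/resultant computations that translate irreducibility into the statement that certain expressions are nonsquares, forcing contradictions in the degenerate cases. The sign ambiguities $\pm$ coming from the $(\beta_1,\beta_2)\mapsto(\beta_1,-\beta_2)$ symmetry (Remark \ref{rem}(b)) are routine but must be tracked carefully to land on the stated form $\vartheta(\lambda_1+1)$ with the correct sign. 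Apart from that, the computations — evaluating $f_1,f_2,g_1,g_2$ at the chosen $a$ and simplifying — are mechanical and I would not carry them out in detail, referring instead to the analogous computations in the proofs of Lemmas \ref{f1}, \ref{f2} and \ref{q3}.
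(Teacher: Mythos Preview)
Your treatment of the case $\rho=\xi$ is correct and matches the paper's argument: the auxiliary polynomial $h(t)=f_1(t)+g_1(t)$ is the right one, and its discriminant $4(\beta_2^2+\xi(\beta_1+1)^2)=4\nu^2$ is a square precisely because the irreducibility of $f_2$ forces $(\beta_1+1)^2\xi^2+\xi\beta_2^2=\nu^2\xi$.

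However, your auxiliary polynomial for $\rho=1$ is wrong, and the error propagates. You propose $h(t)=f_2(t)-\vartheta\,g_1(t)$; a root $a$ of this gives $\beta_4=f_2(a)/g_1(a)=\vartheta$, a \emph{constant}, whereas the target representative $U_4(1,\lambda_1,\vartheta(\lambda_1+1))$ requires $\beta_4=\vartheta(\beta_3+1)$ with $\beta_3=f_1(a)/g_1(a)$. You are mirroring Lemma~\ref{q3} too literally: there the target second coordinate was the constant $2\iota$, so $h(t)=f_2(t)-2\iota\,g_1(t)$ was appropriate; here it is not. The condition you actually need is $f_2(a)=\vartheta\bigl(f_1(a)+g_1(a)\bigr)$, so the correct auxiliary polynomial is
\[
h(t)=f_2(t)-\vartheta\bigl(f_1(t)+g_1(t)\bigr),
\]
exactly as in the paper. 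With this choice the discriminant of $h$ equals $4(\vartheta^2+1)\bigl((\beta_1+1)^2+\beta_2^2\bigr)=4\sigma^2\xi\cdot\nu^2\xi=(2\sigma\nu\xi)^2$, a square, so $h$ has a rational root and the rest of your outline (the nondegeneracy check $g_1(a)g_2(a)\neq0$ via resultants, and the exclusion of $(-1,0)$) goes through. By contrast, the discriminant of your $h(t)=f_2(t)-\vartheta g_1(t)$ works out to $\xi(4\nu^2+\vartheta^2\omega^2)$, which is not evidently a square, so your polynomial may fail to have a root in $\F$ at all.
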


\begin{proof}
Since $f_1(t)$ and $f_2(t)$ are both irreducible polynomials, then $\beta_2^2+4\rho\beta_1=\omega^2 \xi$ and
$(\beta_1+1)^2\rho^2+\rho\beta_2^2=\nu^2\xi$ for some $\omega,\nu\in \F^\ast$.
First, consider $\rho=1$. The statement is obvious if $\beta_2=\vartheta(\beta_1+1)$. 
So, assume $\beta_2\neq \vartheta(\beta_1+1)$ and let $h(t)=f_2(t)-\vartheta(f_1(t)+g_1(t))$.
Since the discriminant of $h(t)$ is $4(\vartheta^2+1)\nu^2\xi=(2\sigma\nu\xi)^2$, there exists
$a\in \F$ such that $h(a)=0$. 
Now, denote by $r_i$ the resultant between $h(t)$ and $g_i(t)$ with respect to $t$: we obtain
$r_1=\nu^2\xi((\beta_1-1)^2\vartheta^2-\omega^2\xi)$ and $r_2=4(\vartheta^2+1)\nu^2\xi$.
From $r_1=0$ we get $\xi=((\beta_1-1)\vartheta\omega^{-1})^2$, an absurd;
from $r_2=0$ we obtain  $\vartheta^2+1=0$, which is in contradiction with the initial assumption on $\vartheta$.
Hence, $g_1(a)g_2(a)\neq 0$ and so $U_4(1,\beta_1,\beta_2)$ is conjugate to $U_4\left(1,\frac{f_1(a)}{g_1(a)}, 
\frac{\vartheta(f_1(a)+g_1(a))}{g_1(a)}\right)$.

Next, consider $\rho=\xi$.
The statement is obvious if $\beta_1=-1$. So, assume $\beta_1\neq -1$ and let $h(t)=f_1(t)+g_1(t)$. Since the 
discriminant of $h(t)$ is $4\nu^2$, there exists $a\in \F$ such that $h(a)=0$.
Clearly, if $h(a)=g_1(a)=0$, then $f_1(a)=0$, contradicting the assumption that $f_1(t)$ is irreducible.
So, suppose that $h(a)=g_2(a)=0$ and denote by $r$ the resultant between $h(t)$ and $g_2(t)$ with respect to $t$.
Since $r=4\nu^2\xi\neq 0$, we obtain that $g_2(a)\neq 0$ 
and so
$U_4(\xi,\beta_1,\beta_2)$ is conjugate to $U_4\left(\xi,-1,\frac{f_2(a)}{g_1(a)}\right)$.
\end{proof}

\begin{lemma}\label{repr}
Let $\F=\F_q$ be a finite field and keep the previous notation.
\begin{itemize}
\item[(a)] Suppose $q$ even. Then we can take $\F^\square=\{1\}$ and
$$\CC_1=\{(0,\lambda): \lambda \in \F\}\cup\left\{\left(1+\kappa_a^{-1},a \kappa_a^{-1}\right): a \in \F\setminus\{0,1\}\right\},$$
where, for every $a\in \F\setminus\{0,1\}$,  $\kappa_a\in \F^\ast$ is a fixed element 
such that $\kappa_a^2+\kappa_a\not\in a^2\G(\F)$.
In particular, $|\CC_1|=2q-2$.
\item[(b)] Suppose $q\equiv 1\pmod 4$. Then we can take $\F^\square=\{1,\xi\}$ and
\begin{eqnarray*}
\CC_1 & = & \{(-1,0),(0,0)\}\cup \{(\lambda+1,2\iota): \lambda \in\F_q^\ast\},\\
\CC_\xi & = & \{(-1,0),(0,0)\}\cup \{(0,\lambda): \lambda \in \N(\F_q)\}\cup \{(\lambda,0): \lambda \in \P(\F_q) \}\cup \\
&& \cup \{(-1, \lambda): \lambda \in \QQ(\F_q) \},
\end{eqnarray*}
where $\iota^2=-1$ and 
$$\F_q^\ast\setminus\left(\{\pm 1\}\cup \{\omega^2\xi: \omega\in \F_q\}\right)=\P(\F_q)\sqcup -\P(\F_q)$$
and 
$$\F_q^\ast\setminus\left\{ a+a^{-1}\xi: a \in \F_q^\ast\right\}=\QQ(\F_q)\sqcup -\QQ(\F_q).$$
In particular, $|\CC_1|+|\CC_\xi|=2q+1$.
\item[(c)] Suppose $q\equiv 3\pmod 4$. Then we can take  $\F^\square=\{1,-1\}$ and
\begin{eqnarray*}
\CC_1 & = & \{(-1,0),(0,0)\}\cup \{(0,\lambda): \lambda \in \N(\F_q)\}\cup \{(\lambda,0): \lambda \in \P(\F_q) \}\cup \\
&& \cup \{(\lambda, \vartheta(\lambda+1)): \lambda \in \S(\F_q) \},\\
\CC_{-1} & = & \{(-1,0),(0,0)\}\cup \{(\lambda+1,2): \lambda \in \F_q^\ast\},
\end{eqnarray*}
where $\vartheta$ is such that $\vartheta^2+\sigma^2+1=0$ for some $\sigma\in \F^\ast$ and
$$\F_q^\ast\setminus\left(\{\pm 1\}\cup \{\omega^2: \omega\in \F_q^\ast\}\right)=\P(\F_q)\sqcup -\P(\F_q)$$
and 
$$\F_q^\ast\setminus\left(\{\pm 1\}\cup \left\{\frac{1-a\vartheta}{a(a+\vartheta)}: a \in \F_q^\ast\setminus\{-\vartheta\}\right\}\right)=\S(\F_q)\sqcup \S(\F_q)^{-1}.$$
In particular, $|\CC_1|+|\CC_{-1}|=2q+1$.
\end{itemize}
\end{lemma}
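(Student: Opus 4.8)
\emph{Sketch of the intended proof.} We argue separately for $q$ even, $q\equiv 1\pmod 4$ and $q\equiv 3\pmod 4$, recording in each case a transversal $\F^\square$ for $(\F_q^\ast)^2$ and then, for every $\rho\in\F^\square$, a transversal for $\rel_\rho$ on $\CCC(\F_q)$. Since $(\F_q^\ast)^2$ has index $1$ for $q$ even and $2$ for $q$ odd, we take $\F^\square=\{1\}$ when $q$ is even, $\F^\square=\{1,\xi\}$ with $\xi$ a fixed nonsquare when $q\equiv1\pmod 4$, and $\F^\square=\{1,-1\}$ when $q\equiv 3\pmod 4$ (here $-1\notin(\F_q)^2$). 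For the ``distinguished'' value of $\rho$ one may quote an earlier lemma verbatim: if $q\equiv1\pmod 4$ then $\F_q$ contains $\iota$ with $\iota^2=-1$, and Lemma~\ref{q3} gives $\CC_1=\{(-1,0),(0,0)\}\cup\{(\lambda+1,2\iota):\lambda\in\F_q^\ast\}$; if $q\equiv3\pmod 4$ then $-1\notin(\F_q)^2$ and Lemma~\ref{q1} gives $\CC_{-1}=\{(-1,0),(0,0)\}\cup\{(\lambda+1,2):\lambda\in\F_q^\ast\}$. Each such $\rho$ contributes $q+1$ classes.

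For the remaining $\rho$ (and for the unique $\rho=1$ when $q$ is even) the plan is to run, starting from $(\beta_1,\beta_2)\in\CCC(\F_q)\setminus\{(-1,0)\}$, the trichotomy provided by Lemmas~\ref{f1},~\ref{f2} and~\ref{ii} in terms of the polynomials $f_1,f_2$ of \eqref{f12}: if $\beta_1=0$ or $f_1$ is reducible then $U_4(\rho,\beta_1,\beta_2)$ is conjugate to some $U_4(\rho,0,\lambda)$; if $f_1$ is irreducible while $f_2$ is reducible (possible only for $q$ odd) it is conjugate to some $U_4(\rho,\lambda,0)$ with $\lambda\neq0,1$; and if both are irreducible it is conjugate to $U_4(1,\lambda,\vartheta(\lambda+1))$ when $\rho=1$ and to $U_4(\xi,-1,\lambda)$ when $\rho=\xi$. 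Before invoking Lemma~\ref{ii} one must produce the scalar $\vartheta$ it requires, i.e.\ check that $-1$ (equivalently, the relevant multiple of $\xi$) is a sum of two squares in $\F_q$; this holds for every odd $q$ by the usual count of solutions of $x^2+y^2=c$. Then, within each of the three shapes, the residual identifications are read off from Remark~\ref{rem}: part~(b) collapses $U_4(\rho,0,\lambda)$ with $U_4(\rho,0,-\lambda)$, part~(c) collapses $U_4(\rho,\lambda,0)$ with $U_4(\rho,\lambda^{-1},0)$, and the third shape is treated by repeating the fractional-linear computation used in the proof of Lemma~\ref{U4}; parts~(a) and~(d) list exactly the coincidences between members of different shapes. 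Removing all these redundancies is precisely what produces the sets $\N(\F_q),\P(\F_q),\QQ(\F_q),\S(\F_q)$ in the statement, and a direct count of what survives gives $|\CC_1|+|\CC_\xi|=2q+1$ when $q\equiv1\pmod 4$ and $|\CC_1|+|\CC_{-1}|=2q+1$ when $q\equiv3\pmod 4$.

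The case $q$ even needs a separate treatment, since Lemmas~\ref{f2},~\ref{ii},~\ref{q1} and~\ref{q3} all assume $\char\F\neq2$ and only Lemma~\ref{f1} survives. Here $f_2(t)=\beta_2(t^2+\rho)=\beta_2(t+1)^2$ is always reducible but its sole root $t=1$ is excluded by the condition on $t$ in \eqref{b3b4}, so the trichotomy degenerates: either $\beta_1=0$ or $f_1$ is reducible --- which, via Lemma~\ref{f1} together with the fact (checked directly from \eqref{b3b4}) that the substitution cannot relate two distinct pairs $(0,\lambda_1),(0,\lambda_2)$, yields exactly the $q$ representatives $(0,\lambda)$, $\lambda\in\F_q$ --- or $\beta_1\neq0$ and $f_1$ is irreducible. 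In the latter case I would introduce $a=\beta_2(\beta_1-1)^{-1}$; a short computation with \eqref{b3b4} shows that $a$ is unchanged by $\rel_1$, that $a\neq0$ and $a\neq1$ (the value $a=1$ would force $f_1(1)=0$, contradicting irreducibility), and that, writing $\beta_1-1=\kappa^{-1}$, irreducibility of $f_1$ amounts to $\kappa^2+\kappa\notin a^2\G(\F)$. One then checks that for each $a\in\F_q\setminus\{0,1\}$ an admissible $\kappa_a$ exists (a counting argument: the image of $\kappa\mapsto\kappa^2+\kappa$ meets the complement of $a^2\G(\F)$) and that all admissible $(\beta_1,\beta_2)$ with a given $a$ lie in a single $\rel_1$-class, with chosen representative $(1+\kappa_a^{-1},a\kappa_a^{-1})$. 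This yields $q-2$ further classes, whence $|\CC_1|=2q-2$.

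The main obstacle throughout is to be certain that the fractional-linear step in $\rel_\rho$ never carries one of the three shapes $(0,\lambda)$, $(\lambda,0)$, ``both $f_i$ irreducible'' onto another except in the cases tabulated in Remark~\ref{rem}; together with the ad hoc handling of the irreducible branch in characteristic two and the bookkeeping identifying the surviving transversals with $\N,\P,\QQ,\S$, this accounts for essentially all the work, the existence statements for $\vartheta$ and for $\kappa_a$ being routine.
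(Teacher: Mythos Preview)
Your sketch is correct and follows essentially the same route as the paper's proof: the distinguished $\rho$ is handled by quoting Lemmas~\ref{q3} and~\ref{q1}, the remaining odd-$q$ case is reduced via the trichotomy of Lemmas~\ref{f1},~\ref{f2},~\ref{ii} together with Remark~\ref{rem}, and the even-$q$ irreducible branch is analysed directly from the relation~\eqref{b3b4}. Your treatment of the characteristic-two irreducible case (isolating the invariant $a=\beta_2(\beta_1+1)^{-1}$ and rewriting the irreducibility of $f_1$ as $\kappa^2+\kappa\notin a^2\G(\F)$) is in fact more explicit than the paper's, which simply asserts the conjugacy criterion $\beta_2(\beta_3+1)=\beta_4(\beta_1+1)$; note that in characteristic~$2$ your $\beta_1-1$ coincides with the paper's $\beta_1+1$, so the two parametrisations agree.
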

 
\begin{proof}
Let $f_1(t),f_2(t),g_1(t),g_2(t)$ as in \eqref{f12}.
Suppose $q$ even. If either $\beta_1\neq 0$ or $f_1(t)$ is reducible, then $U_4(1,\beta_1,\beta_2)$ is conjugate to $U_4(1,0,\lambda)$ for a unique $\lambda \in \F$, by Lemma \ref{f1} and Remark \ref{rem}.
Now, suppose $\beta_1\neq 0,1$ and $f_1(t)$ irreducible.
Then $\beta_2\neq 0$, $\beta_1\beta_2^{-2}\not \in \G(\F)$ and $g_1(t)$ is also irreducible. We obtain that $U_4(1,\beta_1,\beta_2)$ is conjugate to $U_4(1,\beta_3,\beta_4)$ ($\beta_3\neq 0,1$, $\beta_4\neq 0$, $\beta_3\beta_4^{-2}\not \in \G(\F)$) if and only if $\beta_2(\beta_3+1)=\beta_4(\beta_1+1)$.
So, $U_4(1,\beta_1,\beta_2)$ is conjugate to  $U_1(1,1+\kappa_a^{-1},a\kappa_a^{-1})$, where $a=\beta_2(\beta_1+1)^{-1}\neq 0,1$ and  $\kappa_a\in \F^\ast$ is chosen in such a way that $\kappa_a^2+\kappa_a\not \in a^2\G(\F)$.

Suppose $q\equiv 1 \pmod 4$. The set $\CC_1$ is given in Lemma \ref{q3}. So, consider the case $\rho=\xi$ with $(\beta_1,\beta_2)\neq (-1,0)$.
If either $\beta_1=0$ or $f_1(t)$ is reducible, then $R=U_4(\xi,\beta_1,\beta_2)$ is conjugate to $U_4(\xi,0,\lambda)$ for a unique $\lambda \in \N(\F_q)$, by Lemma \ref{f1} and Remark \ref{rem}. So, assume $\beta_1\neq 0$ and $f_1(t)$ irreducible.
If $f_2(t)$ is reducible, then $R$ is conjugate to $U_4(\xi,\lambda,0)$ for some $\lambda\neq 0,1$, by Lemma \ref{f2}.
Recall that, by Remark \ref{rem}, $U_4(\xi,\lambda_1,0)$ is conjugate to $U_4(\xi,\lambda_2,0)$ if and only if either 
$\lambda_2=\lambda_1$ or $\lambda_2=\lambda_1^{-1}$; $U_4(\xi,\lambda_1,0)$ is conjugate to $U_4(\xi,0,\lambda_2)$ if and only if $\lambda_1=\xi \omega^2$ and $\lambda_2=2\xi\omega(\xi\omega^2-1)^{-1}$ for some $\omega\in \F$; 
$U_4(\xi,-1,\lambda_1)$ is conjugate to $U_4(\xi,-1,\lambda_2)$ if and only if $\lambda_2=\pm \lambda_1$.
Finally, if $\beta_1\neq 0,1$ and both $f_1(t)$ and $f_2(t)$ are reducible, then $R$ is conjugate to $U_4(\xi, -1, \lambda)$ for some $\lambda\in \F$, by Lemma \ref{ii}. Now, $U_4(\xi,-1,\lambda_1)$ is not conjugate to $U_4(\xi,\lambda_2,0)$ if $\lambda_1\neq 0$. Furthermore, $U_4(\xi,-1,\lambda_1)$ is conjugate to $U_4(\xi,0,\lambda_2)$
if and only if $\lambda_1=a+\xi a^{-1}$ and $\lambda_2=\frac{\pm a^2-\xi}{2a}$ for some $a\in\F^\ast$.

Suppose $q\equiv 3 \pmod 4$. The set $\CC_{-1}$ is given in Lemma \ref{q1}. So, consider the case $\rho=1$ with $(\beta_1,\beta_2)\neq (-1,0)$. Proceeding in a way similar to what we did for the case $q\equiv 1 \pmod 4$, $R=U_4(1,\beta_1,\beta_2)$ is conjugate to a subgroup having one of the following shape $U_4(1,0,\lambda)$ for a unique $\lambda \in \N(\F_q)$,
$U_4(1,\lambda,0)$ for some $\lambda\neq 0,1$, or $U_4(1,\lambda, \vartheta(\lambda+1))$ for some $\lambda\neq 1$.
Now, $U_4(1,\lambda_1,0)$ is conjugate to $U_4(1,\lambda_2,0)$ if and only if either 
$\lambda_2=\lambda_1$ or $\lambda_2=\lambda_1^{-1}$; $U_4(1,\lambda_1,0)$ is conjugate to $U_4(1,0,\lambda_2)$ if and only if $\lambda_1=\omega^2$ and $\lambda_2=2\omega(\omega^2-1)^{-1}$ for some $\omega\in \F\setminus\{\pm 1\}$;
$U_4(1,\lambda_1,\vartheta(\lambda_1+1))$ is conjugate to $U_4(1,\lambda_2,\vartheta(\lambda_2+1))$ if and only if $\lambda_2=\lambda_1^{\pm 1}$.

Furthermore, $U_4(1,\lambda_1,\vartheta(\lambda_1+1))$ is conjugate to $U_4(1,0,\lambda_2)$ if and only if
$\lambda_1=\left(\frac{1-a\vartheta}{a(a+\vartheta)}\right)^{\pm 1}$, $\lambda_2=\mp \frac{a^2\vartheta-2a-\vartheta}{a^2+2a\vartheta-1}$,
$a\neq 0,-\vartheta,\vartheta^{-1}$; $U_4(1,\lambda,\vartheta(\lambda_1+1))$ is not conjugate to $U_4(1,\lambda_2,0)$.
\end{proof}

\begin{proposition}\label{non_f}
Let $\F=\F_q$ be a finite field. If $q$ is odd, there are exactly $5q+9$ 
distinct conjugacy classes of nonabelian regular subgroups in $\Delta_4(\F_q)$, that can be represented 
by:
$$U_2(0,1,0,1,1),\;\; U_2(0,1,0,0,1),\;\; U_3(0,1,1),\;\;U_3(0,1,0),\;\; U_3(1,\lambda,0)\; (1\neq\lambda\in \F_q),$$
$$\RR_D\;(D\in \Pi_A(\F_q)),\;\;U_4(1,\beta_1,\beta_2)\;((\beta_1,\beta_2)\in \CC_1),\;\;
U_4(\xi,\beta_1,\beta_2)\;((\beta_1,\beta_2)\in \CC_\xi),$$
where
\begin{eqnarray*}
\Pi_A(\F_q) & = & \{E_{2,3}-E_{3,2},\; E_{1,1}+E_{2,3}-E_{3,2},\; E_{2,2}+E_{2,3}+ \lambda E_{3,3},\\
&&E_{1,1}+E_{2,2}+E_{2,3}+\lambda E_{3,3},\; \xi E_{1,1}+E_{2,2}+2E_{2,3}+E_{3,3},\\
&&E_{1,3}+E_{2,3}+E_{3,2},\; E_{1,1}+E_{1,3}+E_{2,3}+E_{3,2}: \lambda \in \F_q\}
\end{eqnarray*}
and $x^2-\xi$ is a fixed irreducible polynomial of $\F_q[x]$ (take $\xi=-1$ if $q\equiv 3 \pmod 4$).

If $q$ is even,  there are exactly $5q+6$ 
distinct conjugacy classes of nonabelian regular subgroups in $\Delta_4(\F_q)$, that can be represented 
by:
$$U_2(0,1,0,1,1),\;\; U_2(0,1,0,0,1),\;\;  U_3(0,1,1),\;\;  U_3(0,1,0),\;\;
U_3(1,\lambda,0)\; (1\neq \lambda\in \F_q),$$ 
$$U_5(1,1,0),\;\; U_5(1,1,\xi), \;\; \RR_D\;(D\in \Pi_A(\F_q)),\;\;U_4(1,\beta_1,\beta_2)\;((\beta_1,\beta_2)\in \CC_1),$$
where
\begin{eqnarray*}
\Pi_A(\F_q) & =& \{E_{2,2}+E_{2,3}+\lambda E_{3,3},\;E_{1,1}+E_{2,2}+E_{2,3}+\lambda E_{3,3},\\
&&E_{1,3}+E_{2,3}+E_{3,2},\;E_{1,1}+E_{1,3}+E_{2,3}+E_{3,2},\\
&&\xi E_{1,1}+E_{1,3}+E_{2,3}+E_{3,2}+E_{3,3}:\lambda \in \F_q\},
\end{eqnarray*}
and $x^2+x+\xi$ is a fixed irreducible polynomial of $\F_q[x]$.
\end{proposition}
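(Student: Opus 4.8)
The plan is to read the list of representatives off Theorem \ref{main_nonab}, specialise the parametrised families $\CC_\rho(\F)$ and $\Pi_A(\F)$ to $\F=\F_q$ (using Lemma \ref{repr} for the former and the congruence classification of matrices for the latter), and then carry out the bookkeeping that produces the counts $5q+9$ and $5q+6$.

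First I would recall from Theorem \ref{main_nonab} that every nonabelian regular subgroup in $\D$ is conjugate to exactly one of $U_2(0,1,0,1,1)$, $U_2(0,1,0,0,1)$, a subgroup of Table \ref{tab3}, or $\RR_D$ with $D\in\Pi_A(\F)$, and that these representatives are pairwise non-conjugate (by Theorem \ref{main_nonab}, Corollary \ref{cortab}, Lemma \ref{RD} and Lemma \ref{U4}). Over $\F_q$ one has $\F^\square=\{1,\xi\}$ when $q$ is odd and $\F^\square=\K(\F_q)=\{1\}$, $\H(\F_q)=\{0,\xi\}$ when $q$ is even. The contributions of Table \ref{tab3} are then: one class each from $U_3(0,1,0)$ and $U_3(0,1,1)$; the $q-1$ classes $U_3(1,\lambda,0)$ with $\lambda\in\F_q\setminus\{1\}$ (which absorbs $U_3(1,0,0)$); when $q$ is even, the $|\H(\F_q)|=2$ classes $U_5(1,1,0)$ and $U_5(1,1,\xi)$; and the $\sum_{\rho\in\F^\square}|\CC_\rho(\F_q)|$ classes $U_4(\rho,\beta_1,\beta_2)$, which by Lemma \ref{repr} equals $2q+1$ if $q$ is odd and $2q-2$ if $q$ is even. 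The explicit representatives for these $\CC_\rho(\F_q)$ are exactly the ones tabulated in Lemma \ref{repr}.

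Second, I would determine $\Pi_A(\F_q)$. Exactly as in the proofs of Propositions \ref{non_ac} and \ref{nonab_real}, I would apply the canonical forms of \cite[Theorem 2.1]{HS} (equivalently \cite{W1}): each $3\times 3$ matrix is a congruence direct sum of indecomposable blocks of size at most $3$, so $\Pi_A(\F_q)$ consists of those congruence-direct-sum forms of size $3$ that are not symmetric, taken up to the scalar action $A\mapsto\lambda A$, $\lambda\in\F_q^\ast$. Sorting these by whether a $1\times 1$ block is present and by the square class ($\{1\}$ and $\{\xi\}$ for $q$ odd; only $\{1\}$ for $q$ even) of the scalars entering the blocks, and noting that a one-parameter family such as $E_{2,3}+\mu E_{3,2}$ collapses after projectivisation over $\F_q$, one obtains exactly the lists displayed in the statement; counting each $\lambda$-indexed family as $q$ matrices gives $|\Pi_A(\F_q)|=2q+5$ for $q$ odd and $|\Pi_A(\F_q)|=2q+3$ for $q$ even, distinctness being provided by Lemma \ref{RD} together with the congruence invariants.

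Finally, summing the contributions yields $2+1+1+(q-1)+(2q+1)+(2q+5)=5q+9$ classes for $q$ odd and $2+1+1+(q-1)+2+(2q-2)+(2q+3)=5q+6$ for $q$ even, which is the assertion. The main obstacle is the second step: extracting from the congruence classification the precise list of asymmetric $3\times 3$ forms over $\F_q$ up to scalars, keeping careful track of the two square classes when $q$ is odd — this is what forces the $\xi$-twisted representatives $\xi E_{1,1}+E_{2,2}+2E_{2,3}+E_{3,3}$ (for $q$ odd) and $\xi E_{1,1}+E_{1,3}+E_{2,3}+E_{3,2}+E_{3,3}$ (for $q$ even) — together with deciding correctly which one-parameter families survive and which merge under projective congruence. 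Everything else is routine once Lemma \ref{repr} and the congruence classification are in hand.
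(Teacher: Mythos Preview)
Your proposal is correct and follows essentially the same route as the paper: invoke Theorem \ref{main_nonab} for the general list, Lemma \ref{repr} for the explicit sets $\CC_\rho(\F_q)$ and their cardinalities, and the known projective-congruence classification over $\F_q$ (the paper cites \cite[Theorem~4]{W1}) for $\Pi_A(\F_q)$, then add up the pieces. Your write-up is simply a more detailed rendering of the same argument, including the bookkeeping that the paper leaves implicit.
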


\begin{proof}
The set $\Pi_S(\F_q)$ have been determined in \cite[Theorem 4]{W1}, so the result follows from  Theorem \ref{main_nonab} and Lemma \ref{repr}. 
\end{proof}

We conclude recalling that Theorem \ref{main} follows immediately from Theorems \ref{main_ab} and \ref{main_nonab}, applying Proposition \ref{prop34}.

\end{document}